\def\section{\@startsection{section}{1}%
	\z@{.7\linespacing\@plus\linespacing}{.5\linespacing}%
	{\bfseries
		\centering
}}
\def\@secnumfont{\bfseries}
\numberwithin{equation}{section}
\colorlet{darkblue}{blue!50!black}
\newtheorem{theorem}{Theorem}[section]
\newaliascnt{lemma}{theorem}
\newtheorem{lemma}[lemma]{Lemma}
\newaliascnt{proposition}{theorem}
\newaliascnt{assumption}{theorem}
\newtheorem{assumption}[assumption]{Assumption}
\newaliascnt{corollary}{theorem}
\newtheorem{corollary}[corollary]{Corollary}
\newaliascnt{definition}{theorem}
\newtheorem{definition}[definition]{Definition}
\newaliascnt{example}{theorem}
\newaliascnt{remark}{theorem}
\newtheorem{remark}[remark]{Remark}
\newaliascnt{hypothesis}{theorem}
\newaliascnt{property}{theorem}
\let\originalleft\left
\let\originalright\right
\renewcommand{\left}{\mathopen{}\mathclose\bgroup\originalleft}
\renewcommand{\right}{\aftergroup\egroup\originalright}
\newcommand{\doublewidetilde}[1]{{%
		\mathpalette\double@widetilde{#1}%
}}
\newcommand{\double@widetilde}[2]{%
	\sbox\z@{$\m@th#1\widetilde{#2}$}%
	\ht\z@=.9\ht\z@
	\widetilde{\box\z@}%
}
\DeclareMathOperator*{\esssup}{ess\,sup}
\renewcommand{\d}{\/\mathrm{d}\/}
\def\w{\textbf{W}^{\varepsilon}_{{\theta}^{\varepsilon}}}
\def\L{\mathrm{L}}
\def\A{\mathrm{A}}
\def\C{\mathrm{C}}
\def\B{\mathrm{B}}
\def\D{\mathrm{D}}
\def\X{\mathbb{X}}
\def\x{\boldsymbol{x}}
\def\z{\boldsymbol{z}}
\def\v{\boldsymbol{v}}
\def\V{\mathbb{V}}
\def\w{\boldsymbol{w}}
\def\N{\mathbb{N}}
\def\no{\nonumber}
\def\V{\mathbb{V}}
\def\U{\mathrm{U}}
\def\P{\mathbb{P}}
\def\u{\boldsymbol{u}}
\def\H{\mathbb{H}}
\def\n{\boldsymbol{n}}
\newcommand{\R}{\mathbb{R}}
\renewcommand{\d}{\/\mathrm{d}\/}
\newcommand{\Addresses}{{
		\footnote{
				\footnotesize

			\noindent \textsuperscript{1,2}School of Mathematics,
			Indian Institute of Science Education and Research, Thiruvananthapuram (IISER-TVM),
			Maruthamala PO, Vithura, Thiruvananthapuram, Kerala, 695 551, INDIA.  \par\nopagebreak \noindent
			 \textit{e-mail:}  \texttt{Manika Bag: manikabag19@iisertvm.ac.in}		\\
		 \textit{e-mail:} \texttt{Sheetal Dharmatti: sheetal@iisertvm.ac.in }		
			
				\noindent \textsuperscript{3}Department of Mathematics, Indian Institute of Technology Roorkee-IIT Roorkee,
			Haridwar Highway, Roorkee, Uttarakhand 247667, INDIA.\par\nopagebreak
			\noindent  \textit{e-mail:} \texttt{Manil T. Mohan: maniltmohan@ma.iitr.ac.in, maniltmohan@gmail.com.}

			\noindent \textsuperscript{*}Corresponding author.
			
				\textit{Keywords: Cahn-Hilliard equation, Brinkman-Frochheimer equations,  Leray-Hopf weak solution, energy equality} 
			
			Mathematics Subject Classification (2020): Primary 35A01, 35A02, 76B03 ; Secondary 35Q35, 76D03 .

			\medskip\noindent
			{\bf Acknowledgments:} 
			Manika Bag would like to thank the  Indian Institute of Science Education and Research, Thiruvananthapuram, for providing financial support and stimulating the environment for the research.  Sheetal Dharmatti would like to thank  Science $\&$ Engineering Research Board (SERB), India, Core Research grant, CRG/2021/008278.
			M. T. Mohan would  like to thank the Department of Science and Technology (DST) Science $\&$ Engineering Research Board (SERB), India for a MATRICS grant (MTR/2021/000066). 
			
}}}
\begin{document}

\title[Well-posedness of a 3D Damped Cahn-Hilliard-Navier-Stokes Equations]{Well-posedness of three-dimensional Damped Cahn-Hilliard-Navier-Stokes Equations	\Addresses	}
	
	\author[M. Bag, S. Dharmatti and  M. T. Mohan]
	{Manika Bag\textsuperscript{1}, Sheetal Dharmatti\textsuperscript{2*} and Manil T. Mohan\textsuperscript{3}} 
\maketitle
	
\begin{abstract} 
 This paper presents a mathematical analysis of the evolution of a mixture of two incompressible, isothermal fluids flowing through a porous medium in a three-dimensional bounded domain. The model is governed by a coupled system of convective Brinkman–Forchheimer equations and the Cahn–Hilliard equation, considering a regular potential and non-degenerate mobility. We first establish the existence of a Leray–Hopf weak solution for the coupled system when the absorption exponent 
$r\geq1$. Additionally, we prove that every weak solution satisfies the energy equality for 
$r\geq3$. This further leads to the uniqueness of weak solutions in three-dimensional bounded domains, subject to certain restrictions on the \emph{viscosity} ($\nu$) and the \emph{Forchheimer coefficient} ($\beta$) in the critical case 
$r=3$. Moreover, we provide an alternative simplified proof for the uniqueness of weak solutions for 
$r\geq 3$ that holds without imposing any restrictions on $\nu$ or $\beta$. Similar results are also obtained for the case of degenerate mobility and singular potential.  
\end{abstract}

\section{Introduction}
We study here a diffuse interface model characterized by the local Cahn-Hilliard equation coupled with the convective Brinkman-Frochheimer equations or damped Navier-Stokes equations. To the best of authors' knowledge, this is the first paper considering such a coupling and its mathematical analysis. Our aim is to study the well-posedness of this  model in three dimensions (3D) with various possible choices for the potential and mobility.

The diffuse interface theory explains how two fluids which do not squish easily but flow thickly move and how the boundary between them changes. This formulation was first introduced by Hohenberg and Halperin in \cite{H}. The model is based on the equilibrium between mass and momentum, incorporating constitutive laws that align with a variant of the second law of thermodynamics. The model has been utilised in various numerical investigations for specific practical applications, such as droplet formation and collision, moving contact lines, large deformation flows, etc. We refer the readers to \cite{diffuse} and citations contained therein for an overview of these topics. The above formulation is characterised by the Cahn-Hilliard equation coupled  with Navier-Stokes equations.   If we denote the relative concentration  of the fluid by $\varphi$ and the average velocity by $\boldsymbol{u}$; in a  simplified setting when density is  constant; the above characterisation reduces to
\begin{equation}\label{equ Q}
\left\{
\begin{aligned}
  &\varphi' + \boldsymbol{u} \cdot \nabla \varphi = \mathrm{div}(m(\varphi)\nabla \mu),\\
 &   \boldsymbol{u}' - 2\mathrm{div}(\nu(\varphi) D\boldsymbol{u}) + (\boldsymbol{u}\cdot \nabla)\boldsymbol{u}  + \nabla \pi = \mu \nabla \varphi,\\
& \mathrm{div}~\boldsymbol{u}  = 0,
\end{aligned}   
\right.
\end{equation}
in $ \Omega \times (0,T)$, where $\u(x,t)= (u_1, u_2, u_3)$ is the velocity field and the scalar fields $\varphi$ and $\pi$ denote the relative concentration and pressure of the fluid, respectively and $\Omega$ is a bounded domain in $\mathbb{R}^3$ with sufficiently smooth boundary $\partial\Omega$. The chemical potential $\mu$ is the variation of the free energy functional $$ \mathcal{E}(\varphi)=\int_{\Omega}\Big(\frac{1}{2}|\nabla\varphi(x)|^2+ F(\varphi(x))\Big) dx,$$  where $F$ represents the double-well potential whose wells are located in pure phases \cite{ha13, bosia_pullback_attractor, 1996_ch, boyer1999mathematical}. Further generalizations of the model have been made by choosing  $F$ to be a smooth potential (defined on the whole real line) or singular potential (defined on a bounded interval)  and by considering mobilities $m (\varphi) $ of different types  namely  non-degenerate or degenerate \cite{boyer1999mathematical, ha13, ha09diffuse, uniqueness, gal10} . By considering different forms of the free energy, other generalizations of the model are also studied \cite{giorgini2021navier, colli_weak, ha14, nonloccahn, 2015Frigeri}. For example, if the gradient term is replaced by a nonlocal spatial operator, one can define $$ E(\varphi)=\frac{1}{4}\int_{\Omega}\int_{\Omega}J(x-y)(\varphi(x)-\varphi(y))^2 dx \, dy + \int_{\Omega}F(\varphi(x)) dx,$$ 
and the chemical potential associated with the nonlocal model  given by  its first variation namely $$\mu= a\varphi-J\ast\varphi+F'(\varphi), \ \text{ where }\  a(x):=\int_{\Omega}J(x-y) dy, \ x\in\Omega,$$ where $J:\mathbb{R}^3\to\mathbb{R}$ is a smooth even function.  The system \eqref{equ Q} associated with this $\mu$ is called nonlocal Cahn-Hilliard -Navier- Stokes model (nonlocal CHNS model). A brief review of this diffuse interface model can be found in \cite{diffuse}.

\subsection{\textbf{Literature Survey}}
We discuss briefly the results available in  the literature for CHNS model  with  different types of mobility and/or potential. Let us first consider the local CHNS model for which the mathematical analysis was presented by F. Boyer in \cite{boyer1999mathematical} for the case when $\Omega\subset\mathbb{R}^d,$ $d\in\{2,3\}$ is a periodical channel. He showed existence of a global weak solution  in  dimensions $2$ and $3$, which is shown to be strong and unique  for $t \in (0, T_0)$ with $T_0$ sufficiently small. Moreover, the case  of singular potential  with the degenerate mobility is also considered.  A more complete mathematical theory of existence, uniqueness, regularity and asymptotic behaviour in the case of singular potential with degenerate mobility is studied by H. Abels in \cite{ha09diffuse}.  The author has  further studied fluids of different densities which lead to a non-homogeneous CHNS system,  where the density of mixture depends on fluid concentration and velocity field is no longer  divergence free. The detailed analysis of this system is studied in \cite{ha09}. The asymptotic behaviour of 2D CHNS system is studied by Gal and Grasseli in \cite{ gal10}, where the authors showed that the system generates a strongly continuous semigroup which possesses a global attractor. They  established the existence of an exponential attractor and finally proved that each trajectory converges to a single equilibrium. 
However, the  uniqueness of weak solutions in the case of singular potential and degenerate mobility, and regularity of weak solutions was  open even in two dimensions until Giorgini, Miranville, Temam proved it in \cite{uniqueness}. These questions  still  remain as challenging open problems in three  dimensions. For more details, we refer the readers  to \cite{ uniqueness, ha13, ha14}.

A nonlocal CHNS model is considered in \cite{ colli_weak} with regular potential and constant mobility, where the authors studied existence of global weak solutions. Whereas in  \cite{2015Frigeri}, the authors consider non-constant mobility case with  singular potential and degenerate mobility for the existence of weak solution, uniqueness and energy identity in a bounded domain in two dimensions. Later, the existence of strong solutions in two dimensions was established  in \cite{ jdeFrigeri}. However, the problems like  energy identity, uniqueness of weak solutions and the existence of global strong solutions in three dimensions are still open.  

In the present work, we focus on a coupled system  of the Cahn-Hilliard equation and the convective Brinkman-Frochheimer equations (\cite{Hajduk+Robinson_2017,Kinra+Mohan_2024}) with constant viscosity assuming the density difference is negligible  in a bounded domain in $\R^3$. The system in dimension less form reads as:
\begin{equation}\label{equ P}
\left\{
\begin{aligned}
  &\varphi' + \boldsymbol{u} \cdot \nabla \varphi = \mathrm{div}(m(\varphi)\nabla \mu), \, \, \text{ in } \Omega \times (0,T), \\
    &    \mu = -\Delta\varphi + F'(\varphi), \\
      &  \boldsymbol{u}' - 2\mathrm{div}(\nu(\varphi) D\boldsymbol{u}) + (\boldsymbol{u}\cdot \nabla)\boldsymbol{u} + \beta|\u|^{r-1}\u + \nabla \pi = \mu \nabla \varphi + \mathbb{U}, \, \, \text{ in } \Omega \times (0,T), \\
     &   \mathrm{div}~\boldsymbol{u}  = 0, \, \, \text{ in } \Omega \times [0,T], \\
      &  \frac{\partial\varphi}{\partial\n} = 0, \,  \frac{\partial\mu}{\partial\boldsymbol{n}} = 0, \,\, \text{ on } \partial\Omega\times[0,T], \\
   &     \u  = \boldsymbol{0}, \,\, \text{ on } \partial\Omega\times[0,T], \\
      &  \boldsymbol{u}(0) = \boldsymbol{u}_0 ,\,\, \varphi(0) = \varphi_0, \,\, \text{ in } \Omega,  
\end{aligned}   
\right.
\end{equation}
where  $\nu>0$ is the \emph{effective viscosity} of the fluids (for simplicity, we have taken it to be constant in this article), and  $\beta$ is the \emph{Forchheimer coefficient} depending on the porosity of the material. The parameter $r$ takes values in $[1,\infty)$ and is known as the \emph{absorption exponent} and $\mathbb{U}$ is a given external force.  Note that $r=3$ is known as the \emph{critical exponent} (\cite{Hajduk+Robinson_2017}). The \emph{chemical potential}  is given by $\mu$ and the \emph{mobility} by $m$ both of  which may depend on $\varphi$. The outward unit normal to the boundary $\partial\Omega$  is denoted by $\n$ and $D\u = \frac{\nabla\u+(\nabla\u)^\top}{2}$ is the symmetric gradient vector. 

 The Cahn-Hilliard (CH) equation describes the phase separation in binary alloys when it is cooled down sufficiently; leading to a partial or total nucleation, known as \emph{spinoidal decomposition}. In \eqref{equ P}, the first equation represents  CH part and $F$ is the well-known \emph{double-well potential}, which can be of the \emph{regular} or \emph{singular} type. A typical example of regular $F$ is 
\begin{align}\label{regular}
    F_{\mathrm{reg}}(s)=(s^2-1)^2, \ s \in \mathbb{R}.
    \end{align}
    A thermodynamically relevant $F$ is the following \emph{logarithmic potential}:
\begin{align}\label{log pot}
    & F_{\log}(s) =\frac{\theta}{2} \big((1+s)\log(1+s)+(1-s)\log(1-s)\big) + \frac{\theta_c}{2}(1-s^2) \quad s\in (-1,1), \quad 0 < \theta < \theta_c,
\end{align}
where $\theta$ and $\theta_c$ are proportional to absolute and critical temperatures, respectively. When absolute temperature is close to the critical temperature, we can approximate $F_{\log}$ by $F_{\mathrm{reg}}$. Moreover, the mobility $m$ can be \emph{nondegenerate} or \emph{degenerate} at $1, -1$. A typical example of $m$ which degenerate at $1, -1$ is 
\begin{align}\label{ex m}
m(s) = k(s)(1-s^2)^n,
\end{align}
 where $n\in\mathbb{N},$ $k: [-1, 1] \rightarrow \mathbb{R}$ is a $C^1$-function with $k_1 \leq k(s) \leq k_2, \ \text{ for all }\  s \in [-1, 1]$ for some positive constants $k_1$ and $ k_2.$
A physically realistic version of the Cahn-Hilliard equation is characterized by nonlocal free energy. A nonlocal Cahn-Hilliard model has been considered in \cite{nonloccahn, ha15, bates05}. The difference between the local and nonlocal Cahn-Hilliard model is the choice of the interaction potential.  We enclose the CH equation with the Neumann boundary condition $\frac{\partial\mu}{\partial\boldsymbol{n}}=0$ with a natural variational boundary condition $\frac{\partial\varphi}{\partial\boldsymbol{n}}=0$. Because of these boundary conditions, we have \emph{conservation of mass}, that is, the spatial average of the order parameter obtained by formal integration of the equation $\eqref{equ P}_1$ over $\Omega$,
\begin{align}\label{av}
\langle\varphi(t)\rangle = \frac{1}{|\Omega|}\int_\Omega\varphi(x,t) dx = \langle\varphi(0)\rangle, \ \text{ for all }\  t\geq 0.
\end{align}
For the sake of completeness, we refer the interested readers to \cite{CH13signdiff,1996_ch,long20ch,miranville21cahn} for a diverse analysis of the CH equation.
The equation $\eqref{equ P}_3$ is called the convective Brinkman-Frochheimer equation which describes the motion of incompressible fluid flow through a porous medium. The model has real-life applications, such as in non-Newtonian fluid flow, tidal dynamics etc. (see \cite{Hajduk+Robinson_2017,Tamed_ns, Kinra+Mohan_2024,MTMohan2,Varga} and references therein). The  equation $\eqref{equ P}_3$ with $\mu=0$ is sometimes referred as the \emph{damped Navier-Stokes equation} due to the  presence  of the  damping or absorption term  $|\u|^{r-1}\u$. The two equations  are coupled with the non linear coupling terms $ \mathrm{div}(m(\varphi)\nabla \mu)$ and  $\mu \nabla \varphi $ in the  concentration  and velocity equations, respectively.
\subsection{Motivation for the model, difficulties and novelties}
The system \eqref{equ P} describes the evolution of an isothermal mixture of two incompressible and immiscible fluids through a saturated porous medium with (relative) concentration $\varphi$ 
and the (averaged) velocity field $\u$. The flow described by this system behaves like the Cahn-Hilliard-Navier-Stokes (CHNS) fluid flows. The CHNS system describes many physically important  phenomena in fluid dynamics and material sciences and hence the model is extensively studied in the literature. However, as in the case of 3D Navier-Stokes' equations, there are several open problems related to 3D CHNS system. Mainly, the questions concerning the well-posedness of the system; the issue is with the uniqueness of Leray-Hopf weak solutions and the existence of global strong solutions. Moreover, the weak solution of the system is not yet proven continuous in time with respect to the initial data. For  NS equations, one way to resolve these issues is  to introduce a nonlinear damping term (\cite{Antontsev+de_2010,Hajduk+Robinson_2017,Tamed_ns, Kinra+Mohan_2024, MTMohan1, Varga}). This has motivated us to modify CHNS system by introducing a  damping term in the third equation of \eqref{equ P}. We  would now like to analyse the influence of this damping term on the well-posedness and strong solutions of the system \eqref{equ P}.

Apart from the introduction of a coupling between the Cahn-Hilliard equation and the damped Navier-Stokes equations, marking the first consideration of such modification in the literature; we provide many novel results for the model \eqref{equ P}. Our work is structured in two parts. In the initial segment, we examine the system described by \eqref{equ P} with regular potential $F$ and non-degenerate mobility $m$. In this scenario, we prove the following results:
\begin{enumerate}
\item[(i)] Existence of a \emph{Leray-Hopf weak solution} for every $r\geq 1$ in three dimensions (3D) (see Theorem \ref{LH weak sol}).
\item[(ii)] Energy equality satisfied by  the weak solutions  for $r\in[3,\infty)$ (cf. Theorem \ref{thm4.10}).
\item[(iii)] We establish the uniqueness of weak solutions for $r>3$ through a regularization of  the initial data $\varphi_0$ (ref. Theorem \ref{unique}).
\item[(iv)] Additionally,  for the critical case $r=3$, the uniqueness and hence the continuous dependence of the initial data is proven when both the viscosity $(\nu)$ and  porosity $(\beta)$ are sufficiently large ($\beta\nu\geq 1$). 
\item[(v)] Subsequently, we employ another technique utilizing the energy equality of weak solutions for an associated system to establish the uniqueness for the case $r\geq3$ without imposing any restriction on $\nu$ and $\beta$ and without considering a regular initial data $\varphi_0$ (Theorem \ref{uni}).
\end{enumerate}
 The inclusion of a damping term provides higher order $\mathrm{L}^p_t\mathbb{L}^q_x$ estimate on weak solutions, consequently establishing the energy equality of weak solution for every $r\geq 3$ without imposing any constraint  in $\nabla \u$ (see \cite{liang2020energy}).  Energy equality immediately implies that the  weak solution is continuous in time as an $\H\times\mathrm{H}^1$-valued function as detailed in Theorem \ref{thm4.10}. When 3D NSE is considered, it is known that if the velocity field satisfies the following Ladyzhenskaya-Prodi-Serrin (LPS) condition:
 \begin{align*}
 	\u\in \mathrm{L}^p(0,T;\mathbb{L}^s_\sigma(\Omega)),\ \frac{2}{p}+\frac{3}{s}\leq 1,\ 3<s\leq\infty,
 \end{align*}
  then the regularity results and uniqueness of Leray-Hopf weak solutions hold for large data and time. We point out here that for the absorption exponent  $r\in[4,\infty)$,  the above LPS condition is satisfied, and the regularity results and uniqueness of Leray-Hopf weak solutions are expected. But we are able to prove the uniqueness of Leray-Hopf weak solutions for  $r\in[3,4)$ and regularity results for $r\in(3,4)$ also. 
 
  In the later part of our study, we consider a logarithmic double-well potential and degenerate mobility, and establish: \begin{enumerate}
\item[(i)] the existence of a weak solution  (Theorem \ref{thm 6.3}) and 
\item[(ii)]  the energy equality  (Theorem \ref{thm 6.4}).
\end{enumerate}
Let us mention some future directions and difficulties to investigate further.
 First we remark some technical difficulties in proving the existence of strong solutions in smooth bounded domains. The main difficulty is that the operators $-\Delta$ and the Helmholtz-Hodge orthogonal projection $\mathrm{P}_{\H}$ do not commute on  bounded domains (\cite{Robinson_2016}) and hence the results obtained in  \cite[Lemma 2.1]{Hajduk+Robinson_2017} may not be useful in this context.   In the more general case of  degenerate mobility and singular potential,  the well-known method is  to approximate the singular potential $F$ and the degenerate mobility $m$ by a family of regular potentials $F_{\epsilon}$ and non-degenerate mobilities $m_\epsilon$.  Then at the level of   approximated problem,  in order  to bound $\nabla\mu_\epsilon(0)$ by $\nabla\mu(0)$, we need to  introduce a suitable cut off function and  build an appropriate Faedo-Galerkin approximation scheme. As a result, we need to pass to limit at three different levels which leads to the derivation of uniform estimates in three parameters and it brings several  subtle and complicated  calculations. Therefore, we have deferred this problem for a future investigation.  Secondly, in order to obtain the uniqueness of weak solutions in the degenerate case,   we  require extra regularity for $\varphi$ (at least $\mathrm{L}^{\infty}_t\mathrm{H}^3_x$) which cannot be derived under current assumptions. Another extension of the current work is to  consider a system where the viscosity is dependent on $\varphi$.


\subsection{Organization of the paper} The paper is organized as follows: We describe the preliminary notations and necessary function spaces needed for this work in the next section. Section \ref{sec3} is devoted for proving the existence of Leray-Hopf weak solutions using Faedo-Galerkin approximations and the energy equality satisfied by weak solutions for the system with regular potential and non-degenerate mobility. Section \ref{sec4} deals with the uniqueness of the solution assuming mobility to be equal to 1 and $r \geq3$. We provide  two different proofs of uniqueness of solutions, one under a restrictive  condition on $\beta$, $\nu$ ($\beta\nu\geq 1$) and initial data $\varphi_0$, which is similar to the proof for the case $ r> 3$ and the second without any extra condition but considering  certain linearized system and exploiting the uniqueness result of that system. Further, in section \ref{sec5},  we prove the existence of a weak solution for a model with degenerate mobility and singular potential; in particular a logarithmic one, by approximating it with a sequence of regular potentials and degenerate mobility by a sequence of non-degenerate mobility and then passing to the limit. The corresponding energy equality is also established.

\section{Mathematical Preliminaries}
Let $\Omega$ be an open bounded subset of $\mathbb{R}^3$ with sufficiently smooth boundary $\partial \Omega$. Here, we  introduce the function spaces that will be useful in the rest of the paper.
\begin{align*}
    \mathbb{C}_c^\infty(\Omega) &:= \big\{\v \in C^\infty(\Omega;\mathbb{R}^3): \text{supp $\v$ is compact}\big\},\\
    \mathbb{D}_\sigma(\Omega) &:= \big\{\v \in \mathbb{C}_c^\infty(\Omega) ; \text{div}~\v = 0\big\},\\
    \mathbb{L}^p_{\sigma}(\Omega) &:= \text{closure of $\mathbb{D}_\sigma(\Omega)$ in $\mathbb{L}^p(\Omega$}),\\
    \V^s_{\mathrm{div}}(\Omega) &:= \text{closure of $\mathbb{D}_\sigma(\Omega)$ in $\mathbb{W}^{s,2}(\Omega)$} \quad \text{for $s > 0$}.
\end{align*}
We denote the Hilbert spaces $\mathbb{L}^2_{\sigma}(\Omega)$ by $\H$ and $ \V^1_{\mathrm{div}}(\Omega)$ by $\V_{\mathrm{div}}.$ We also denote $\mathrm{L}^2$ for $\mathrm{L}^2(\Omega; \mathbb{R})$ The inner product in $\H$ is denoted by $(\cdot,\cdot)$ and the norm is denoted by $\|\cdot\|$. We represent the dual of $\X$ by $\X'$ and the induced duality between $\X$ and $\X'$ by $\langle\cdot,\cdot\rangle$ for any Banach space $\X$.

\subsection{Linear and nonlinear operators}
	Let us define the \emph{Stokes operator} $\A : \D(\A)\cap  \H \to \H$ by 
	\begin{align}
	\label{stokes}
	\A:=-\mathrm{P}_{\H}\Delta,\ \D(\A)=\mathbb{H}^2(\Omega) \cap \V_{\mathrm{div}},\end{align} where $\mathrm{P}_{\H} : \mathbb{L}^2(\Omega) \to \H$ is the \emph{Helmholtz-Hodge orthogonal projection}. We also have
	$$\langle\A\u, \v\rangle = (\u, \v)_{\V_{\mathrm{div}}} = (\nabla\u, \nabla\v) \  \text{ for all } \ \u \in\D(\A), \v \in \V_{\mathrm{div}}.$$
	It should also be noted that  $\A^{-1}: \H \to \H$ is a self-adjoint compact operator on $\H$ and by
	the classical spectral theorem, there exists a sequence $\lambda_j$ with $0<\lambda_1\leq \lambda_2\leq \lambda_j\leq\cdots\to+\infty$ of eigenvalues
	and a family of corresponding eigenfunctions $\boldsymbol{\omega}_j \in \D(\A)$ which is orthonormal in $\H$ such that $\A\boldsymbol{\omega}_j =\lambda_j\boldsymbol{\omega}_j$. We know that $\u$ can be expressed as $\u=\sum\limits_{j=1}^{\infty}(\u,\boldsymbol{\omega}_j) \boldsymbol{\omega}_j,$ so that $\A\u=\sum\limits_{j=1}^{\infty}\lambda_j( \u,\boldsymbol{\omega}_j)\boldsymbol{\omega}_j,$ for all $\u\in\D(\A)$.
 
	For $\u,\v,\w \in \V_{\mathrm{div}},$ we define the trilinear operator $b(\cdot,\cdot,\cdot):\V_{\mathrm{div}}\times \V_{\mathrm{div}}\times \V_{\mathrm{div}}\to\mathbb{R}$ as
	$$b(\u,\v,\w) = \int_\Omega (\u(x) \cdot \nabla)\v(x) \cdot \w(x)\, d x=\sum_{i,j=1}^3\int_{\Omega}u_i(x)\frac{\partial v_j(x)}{\partial x_i}w_j(x)\, d x,$$
	and the bilinear operator $\B:\V_{\mathrm{div}} \times \V_{\mathrm{div}} \to\V_{\mathrm{div}}'$ defined by,
	$$ \langle \B(\u,\v),\w  \rangle = b(\u,\v,\w) \  \text{ for all } \ \u,\v,\w \in \V_\text{{div}}.$$
	Note that an integration by parts yields, 
	\begin{equation}\label{best}
	\left\{
	\begin{aligned}
	b(\u,\v,\v) &= 0, \ \text{ for all } \ \u,\v \in\V_\text{{div}},\\
	b(\u,\v,\w) &=  -b(\u,\w,\v), \ \text{ for all } \ \u,\v,\w\in \V_\text{{div}}.
	\end{aligned}
	\right.\end{equation}
We will denote $\B(\u,\u)$ as $\B(\u)$ for $\mathrm{P}_{\mathbb{H}}[(\u\cdot\nabla)\u]$.

 For every $f \in (\mathrm{H}^1(\Omega))',$ we denote $\overline{f},$ the average of $f$ over $\Omega$, that is, $\overline{f} := |\Omega|^{-1}\langle f, 1 \rangle$, where $|\Omega|$ is the Lebesgue measure of $\Omega$. 
	Let us also introduce the spaces (see \cite{uniqueness})
	\begin{align*}\mathrm{V}_0 &= \{ v \in \mathrm{H}^1(\Omega) \ : \ \overline{v} = 0 \},\\
	\mathrm{V}_0' &= \{ f \in (\mathrm{H}^1(\Omega))' \ : \ \overline{f} = 0 \},\end{align*}
	and the operator $\mathcal{A} : \mathrm{H}^1(\Omega) \rightarrow (\mathrm{H}^1(\Omega))'$  defined by
	\begin{align*}\langle \mathcal{A} u ,v \rangle := \int_\Omega \nabla u(x) \cdot \nabla v(x) \, d x \  \text{ for all } \ u,v \in \mathrm{H}^1(\Omega).\end{align*}
	Clearly $\mathcal{A}$ is linear and it maps $\mathrm{H}^1(\Omega)$ into $\mathrm{V}_0'$ and its restriction $\mathcal{B}$ to $\mathrm{V}_0$ onto $\mathrm{V}_0'$ is an isomorphism.   
	We know that for every $f \in \mathrm{V}_0'$, $\mathcal{B}^{-1}f$ is the unique solution  of the \emph{Neumann problem}:
	$$
	\left\{
	\begin{array}{ll}
	- \Delta u = f, \  \mbox{ in } \ \Omega, \\
	\frac{\partial u}{\partial\boldsymbol{n}} = 0, \ \mbox{ on } \  \partial \Omega.
	\end{array}
	\right.
	$$
	In addition, we have
	\begin{align} \langle \mathcal{A}u , \mathcal{B}^{-1}f \rangle &= \langle f ,u \rangle, \ \text{ for all } \ u\in \mathrm{V},  \ f \in \mathrm{V}_0' , \label{bes}\\
	\langle f , \mathcal{B}^{-1}g \rangle &= \langle g ,\mathcal{B}^{-1}f \rangle = \int_\Omega \nabla(\mathcal{B}^{-1}f)\cdot \nabla(\mathcal{B}^{-1}g)\, d x, \ \text{for all } \ f,g \in \mathrm{V}_0'.\label{bes1}
	\end{align}
	Note that $\mathcal{B}$ can be also viewed as an unbounded linear operator on $\mathrm{H}$ with the
	domain $$\D(\mathcal{B}) = \left\{v \in \mathrm{H}^2(\Omega) : \frac{\partial v}{\partial\boldsymbol{n}}= 0\text{ on }\partial\Omega \right\}.$$
 Owing to \eqref{bes1}, we define $\|f\|_\ast := \|\nabla\mathcal{B}^{-1}f\|$, which defines a norm on $\mathrm{V}_0'$. Moreover, if $u,u' \in \mathrm{L}^2(0, T; \mathrm{V}_0'),$ then we have (\cite[Lemma 1.2, page 176]{Temam}, see also \cite{temam95})
 \begin{align}
     \langle u'(t), \mathcal{B}^{-1}u(t)\rangle = \frac{1}{2}\frac{d}{dt}\|u(t)\|^2_\ast, \quad \text{for a.e. } t\in(0, T).
 \end{align}

 \section{Weak solutions and energy equality: regular potential and non-degenerate mobility }\label{sec3}
 In this section, we study the existence of Leray-Hopf weak solutions of  the system \eqref{equ P} for the case when $F$ is regular and $m$ is non-degenerate. by  using a Faedo-Galerkin approximation technique and compactness arguments. Then we show that every weak solution satisfies the energy equality.  In order to achieve this, we approximate  weak solutions in space and mollify in time and then pass to the limit in both space and time. To this end, we consider the following assumptions on $F$ and $m$:
 \begin{assumption}\label{prop of F} 
The regular potential $F$ and the non-degenerate mobility satisfy the following assumptions:
	\begin{enumerate}
 \item [(1)] $F \in \C^{2}(\mathbb{R})$  and $F \geq 0.$ 
 \item [(2)] There exists $C_0 >0$ such that $F''(s)\geq -C_0$, for all $s \in \mathbb{R}$.
 \item [(3)] There exist $C_1 >0$, $C_2 > 0$ such that $|F'(s)| \leq C_1|s|^p + C_2,$ for all $s \in \mathbb{R}$ and $1 \leq p \leq {5}$.
 \item[(4)] $F(\varphi_0) \in L^1(\Omega)$.
 \item[(5)] There exists $C_3 > 0$, $|F''(s)| \leq C_3(1 + |s|^{p-1})$ for all $s \in \mathbb{R} \text{ where } 1\leq p \leq {5}$.
 \item[(6)]  $m\in C^{0,1}_{\mathrm{loc}}(\mathbb{R})$ and there exist $m_1, \, m_2 >0$ such that $$m_1 \leq m(s) \leq m_2,  \ \text{ for all }\  s \in \mathbb{R}.$$
	\end{enumerate}
\end{assumption}



A typical example of $F$ is given in \eqref{regular} satisfies the above hypothesis on $F$.

\begin{definition}\label{weak sol defn}   
A pair $(\u, \varphi)$ is called  a \emph{Leray-Hopf weak solution} of the system \eqref{equ P} for every $r\geq 1$ on $ \Omega \times (0, T);  0 \leq T<+\infty$  with initial condition $(\u_0, \varphi_0) \in \H \times \mathrm{H}^1$ and forcing $\mathbb{U}\in\mathrm{L}^2(0, T;\V_{\mathrm{div}}')$, if 
   \begin{align*}
       \u &\in  \mathrm{L}^{\infty}(0, T; \H) \cap \mathrm{L}^2(0, T; \V_{\mathrm{div}}) \cap \mathrm{L}^{r+1}(0, T; \mathbb{L}^{r+1}_{\sigma}),\\
       \varphi &\in \mathrm{L}^{\infty}(0, T; \mathrm{H}^1)\cap \mathrm{L}^2(0, T; \mathrm{H}^2),\\
       \u' & \in \mathrm{L}^{\frac{4}{3}}(0, T; \V_{\mathrm{div}}') + \mathrm{L}^{\frac{r+1}{r}}(0, T; \mathbb{L}_\sigma^{\frac{r+1}{r}}),\quad \text{for } 1\leq r < 3,\\
       \u'  & \in \mathrm{L}^2(0, T; \V_{\mathrm{div}}') + \mathrm{L}^{\frac{r+1}{r}}(0, T; \mathbb{L}_\sigma^{\frac{r+1}{r}}), \quad \text{for } r\geq 3\\
       \varphi' & \in \mathrm{L}^2(0, T, (\mathrm{H}^1)'),
   \end{align*}
   and $(\u, \varphi)$ satisfies
   \begin{align}
    -&\int_0^t \langle \boldsymbol{u}(s), \v'(s) \rangle \, ds+ \nu \int_0^t( \nabla\boldsymbol{u}(s), \nabla\v(s) ) \, ds + \int_0^t\langle (\u(s)\cdot\nabla)\u(s), \v(s) \rangle \, ds \no\\
     &\qquad+ \beta \int_0^t\langle |\u(s)|^{r-1}\u(s), \v(s) \rangle \, ds \no\\ &= \int_0^t(\mu(s) \nabla \varphi(s), \v(s)) \, ds + \int_0^t\langle\mathbb{U}(s), \v(s)\rangle ds + ( \boldsymbol{u}_0, \v(0) ) -(\boldsymbol{u}(t), \v(t)) ,\label{weak form u}\\
    -& \int_0^t \langle \varphi(s), \psi'(s)\rangle \, ds + \int_0^t(\boldsymbol{u}(s) \cdot \nabla \varphi(s), \psi(s)) \, ds + \int_0^t( m(\varphi)\nabla \mu(s), \nabla\psi(s)) \, ds \no\\
    &=(\varphi_0, \psi(0))- (\varphi(t), \psi(t)), \label{weak form phi}
   \end{align}
   for all $t \in [0, T]$ and $\v \in \mathbb{D}_{\sigma}(\Omega \times [0, T]), \, \psi \in C_c^\infty({\Omega}\times [0, T])$. Additionally, it holds that 
   \begin{align}\label{in_data_cgts}
       \lim\limits_{t\to 0^+}\|\u(t) -\u_0\|=0, \quad  \text{ and }\ \lim\limits_{t\to 0^+}\|\varphi(t) -\varphi_0\|_{\mathrm{H}^1}=0.
   \end{align}
Moreover,  the following energy inequality satisfied:
    \begin{align}\label{energy inequality1}
         &\frac{1}{2}\bigg(\|\u(t)\|^2 + \|\nabla\varphi(t)\|^2 + 2\int_{\Omega}F(\varphi)dx\bigg) + \nu \int_0^{t} \|\nabla\u(s)\|^2 \, ds + \int_0^{t}\|\u(s)\|_{\mathbb{L}_{\sigma}^{r+1}}^{r+1} ds\nonumber\\&\qquad + \int_0^{t} \|\sqrt{m(\varphi)}\nabla\mu(s)\|^2 ds 
    \no\\& \quad \leq  \frac{1}{2}\bigg(\|\u_0\|^2 + \|\nabla\varphi_0\|^2 + 2\int_{\Omega}F(\varphi_0)dx\bigg)+ \int_0^{t}\langle\mathbb{U}(s), \u(s)\rangle \, ds, 
    \end{align}
    for all $t\in[0,T]$. 
\end{definition}
The following result provides the existence of a Leray-Hopf weak solution of the system \eqref{equ P} for every $r\geq 1$. 
\begin{theorem}[Leray-Hopf weak solution]\label{LH weak sol}
   Let $\Omega$ be a smooth bounded domain in $\mathbb{R}^3$, and the initial condition $(\u_0, \varphi_0) \in \H\times\mathrm{H}^1$,  $\mathbb{U}\in \mathrm{L}^2(0, T; \V_{\mathrm{div}}')$ and $F$ satisfies the Assumption \ref{prop of F}.  Then there exists a Leray-Hopf weak solution of the system \eqref{equ P} satisfying the energy inequality \eqref{energy inequality1} for every $r\geq 1.$
\end{theorem}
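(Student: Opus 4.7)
My plan is to construct Leray--Hopf weak solutions by a Faedo--Galerkin approximation, exploit the coupled system's built-in energy identity to extract uniform a priori estimates, and then pass to the limit via compactness plus a.e.\ convergence to handle the nonlinear damping. I would take the Stokes eigenfunctions $\{\boldsymbol{\omega}_j\}\subset\D(\A)$ as a basis of $\H$ and the Neumann Laplacian eigenfunctions $\{\psi_j\}$ (with $\psi_0=|\Omega|^{-1/2}$, so that mass conservation \eqref{av} is built in) as a basis of $\mathrm{H}^1(\Omega)$. Writing $\u_n=\sum_{j=1}^n a_j^n(t)\,\boldsymbol{\omega}_j$, $\varphi_n=\sum_{j=0}^n b_j^n(t)\,\psi_j$ and $\mu_n=-\Delta\varphi_n+\Pi_n F'(\varphi_n)$ (with $\Pi_n$ the $\mathrm{L}^2$-projection onto $\mathrm{span}\{\psi_j\}_{j=0}^n$), the projected system becomes a locally Lipschitz ODE system, solvable by Carathéodory.

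For the a priori bounds, I would test the discrete momentum equation by $\u_n$ and the discrete Cahn--Hilliard equation by $\mu_n$. The coupling terms $(\mu_n\nabla\varphi_n,\u_n)$ and $(\u_n\cdot\nabla\varphi_n,\mu_n)$ cancel after summing (using $\mathrm{div}\,\u_n=0$ and $\u_n|_{\partial\Omega}=0$), yielding the discrete analog of \eqref{energy inequality1}. Since $F\geq 0$ and $m_1\leq m\leq m_2$, Young and Gronwall produce uniform bounds on $\u_n$ in $\mathrm{L}^\infty(0,T;\H)\cap\mathrm{L}^2(0,T;\V_{\mathrm{div}})\cap\mathrm{L}^{r+1}(0,T;\mathbb{L}^{r+1}_\sigma)$, on $\varphi_n$ in $\mathrm{L}^\infty(0,T;\mathrm{H}^1)$, and on $\nabla\mu_n$ in $\mathrm{L}^2(0,T;\mathrm{L}^2)$. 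The growth $|F'(s)|\leq C_1|s|^p+C_2$ with $p\leq 5$ and $\mathrm{H}^1\hookrightarrow\mathrm{L}^6$ control the mean $\overline{\mu_n}=|\Omega|^{-1}\int F'(\varphi_n)$, so $\mu_n\in\mathrm{L}^2(0,T;\mathrm{H}^1)$; elliptic regularity for $-\Delta\varphi_n+F'(\varphi_n)=\mu_n$ with Neumann data then yields $\varphi_n\in\mathrm{L}^2(0,T;\mathrm{H}^2)$. Rearranging the equations produces the stated bounds on $\varphi_n'$ and on $\u_n'$ (the convective piece lives in $\mathrm{L}^{4/3}(0,T;\V_{\mathrm{div}}')$ for $r<3$ and in $\mathrm{L}^2(0,T;\V_{\mathrm{div}}')$ for $r\geq 3$, while the damping belongs to $\mathrm{L}^{(r+1)/r}(\Omega\times(0,T))$), which extends the local solution globally to $[0,T]$.

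The principal obstacle is passing to the limit in the nonlinear damping $|\u_n|^{r-1}\u_n$ and in the Korteweg-type coupling $\mu_n\nabla\varphi_n$. Banach--Alaoglu and Aubin--Lions--Simon deliver $\u_n\to\u$ strongly in $\mathrm{L}^2(0,T;\H)$, $\varphi_n\to\varphi$ strongly in $\mathrm{L}^2(0,T;\mathrm{H}^1)$, and, along a subsequence, a.e.\ convergence in $\Omega\times(0,T)$. The damping term is identified by combining a.e.\ convergence with the uniform $\mathrm{L}^{(r+1)/r}_{t,x}$ bound and Vitali's theorem, giving $|\u_n|^{r-1}\u_n\to|\u|^{r-1}\u$ weakly. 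The coupling $\mu_n\nabla\varphi_n$ is handled by rewriting it as $\nabla\bigl(\tfrac{1}{2}|\nabla\varphi_n|^2+F(\varphi_n)\bigr)-\mathrm{div}(\nabla\varphi_n\otimes\nabla\varphi_n)$ and absorbing the gradient into a modified pressure, after which strong $\mathrm{L}^2_{t,x}$-convergence of $\nabla\varphi_n$ suffices. The Cahn--Hilliard flux $m(\varphi_n)\nabla\mu_n$ passes by combining dominated convergence for $m(\varphi_n)$ with weak convergence of $\nabla\mu_n$, and $F'(\varphi_n)\to F'(\varphi)$ holds in $\mathrm{L}^2_{t,x}$ via the $p\leq 5$ growth. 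Finally, \eqref{energy inequality1} follows from the discrete identity by weak and weak-$\ast$ lower semicontinuity together with Fatou applied to $F(\varphi_n)\to F(\varphi)$, and the strong continuity at $t=0^+$ in \eqref{in_data_cgts} is recovered by pairing weak continuity of $(\u,\varphi)$ at $0$ (which comes from the time-derivative bounds) with the $\limsup$-inequality forced by \eqref{energy inequality1}.
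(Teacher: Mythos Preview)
Your proposal is correct and follows essentially the same route as the paper: Faedo--Galerkin with Stokes/Neumann eigenbases, the coupled energy identity from testing with $(\u_n,\mu_n)$, Aubin--Lions for strong convergence, a.e.\ convergence plus uniform $\mathrm{L}^{(r+1)/r}$ bounds to identify the damping limit, and weak lower semicontinuity for the energy inequality and the $t\to0^+$ continuity. The only cosmetic differences are that the paper invokes \cite[Lemma~1.3]{Lions} rather than Vitali for $|\u_n|^{r-1}\u_n\rightharpoonup|\u|^{r-1}\u$, derives the $\mathrm{H}^2$ bound on $\varphi_n$ directly from $(\mu_n,-\Delta\varphi_n)\geq\|\Delta\varphi_n\|^2-C_0\|\nabla\varphi_n\|^2$ rather than phrasing it as elliptic regularity, and passes the Korteweg term $\mu_n\nabla\varphi_n$ by citing standard CHNS arguments instead of your stress-tensor rewriting; none of these affect the substance.
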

Before embarking into the proof of Theorem \ref{LH weak sol}, we provide  some corollaries and a remark   that can be  deduced from  Theorem \ref{LH weak sol}. 
\begin{corollary}\label{cor1}
    If $(\u, \varphi)$ is a Leray-Hopf weak solution of the system  \eqref{equ P}, then one can modify $(\u, \varphi)$ on a set of measure zero in such a way that $(\u, \varphi) \in C([0, T]; \V_{\mathrm{div}}')\times C([0, T];\mathrm{L}^2(\Omega)),$ for $1\leq r\leq 5$ and   $(\u, \varphi) \in C([0, T]; \V_{\mathrm{div}}'+\mathbb{L}^{\frac{r+1}{r}}_{\sigma})\times C([0, T];\mathrm{L}^2(\Omega)),$ for $r>5$. 
\end{corollary}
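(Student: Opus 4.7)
The plan is to derive the stated continuity from the regularity of $(\u,\varphi)$ and $(\u',\varphi')$ listed in Definition \ref{weak sol defn}, via the standard Lions--Magenes continuity lemma: if $f\in L^{p}(0,T;X)$ and $f'\in L^{q}(0,T;Y)$ with $X\hookrightarrow Y$ continuously, then $f$ coincides a.e.\ with a function in $C([0,T];Y)$.

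For the phase field $\varphi$, I would apply the classical Lions--Magenes result to the Gelfand triple $\mathrm{H}^{1}\hookrightarrow \mathrm{L}^{2}\hookrightarrow(\mathrm{H}^{1})'$. Since Definition \ref{weak sol defn} gives $\varphi\in L^{\infty}(0,T;\mathrm{H}^{1})\subset L^{2}(0,T;\mathrm{H}^{1})$ and $\varphi'\in L^{2}(0,T;(\mathrm{H}^{1})')$, we obtain $\varphi\in C([0,T];\mathrm{L}^{2})$ after modification on a null set, and the identity $\tfrac{d}{dt}\|\varphi\|^{2}=2\langle\varphi',\varphi\rangle$ holds a.e.\ on $(0,T)$. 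This handles the $\varphi$--component in both cases.

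For the velocity field $\u$, the case split hinges on the Sobolev embedding $\V_{\mathrm{div}}\hookrightarrow \mathbb{L}^{6}$, which by duality yields $\mathbb{L}^{q}_{\sigma}\hookrightarrow \V_{\mathrm{div}}'$ for every $q\geq 6/5$. The condition $(r+1)/r\geq 6/5$ is equivalent to $r\leq 5$. Hence, for $1\leq r\leq 5$, the damping-contribution piece $L^{(r+1)/r}(0,T;\mathbb{L}^{(r+1)/r}_{\sigma})$ of $\u'$ embeds into $L^{(r+1)/r}(0,T;\V_{\mathrm{div}}')$, and combining with the Navier--Stokes-type piece one obtains $\u'\in L^{p_{r}}(0,T;\V_{\mathrm{div}}')$ with $p_{r}=\min\{4/3,(r+1)/r\}$ when $r<3$, and $p_{r}=\min\{2,(r+1)/r\}$ when $3\leq r\leq 5$. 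Together with $\u\in L^{2}(0,T;\V_{\mathrm{div}})\subset L^{2}(0,T;\V_{\mathrm{div}}')$, the Lions--Magenes lemma then gives $\u\in C([0,T];\V_{\mathrm{div}}')$ after modification on a null set.

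For $r>5$, the embedding $\mathbb{L}^{(r+1)/r}_{\sigma}\hookrightarrow \V_{\mathrm{div}}'$ fails, so one must work in the sum space. Writing $\u'=\u'_{1}+\u'_{2}$ with $\u'_{1}\in L^{2}(0,T;\V_{\mathrm{div}}')$ and $\u'_{2}\in L^{(r+1)/r}(0,T;\mathbb{L}^{(r+1)/r}_{\sigma})$, the fundamental theorem of calculus applied in the Banach space $\V_{\mathrm{div}}'+\mathbb{L}^{(r+1)/r}_{\sigma}$ (endowed with the natural norm) shows that $\u$ agrees a.e.\ with an absolutely continuous, hence continuous, $[\V_{\mathrm{div}}'+\mathbb{L}^{(r+1)/r}_{\sigma}]$-valued function. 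Finally, the initial-data continuity stated in \eqref{in_data_cgts} guarantees that the chosen continuous representatives satisfy $\u(0)=\u_{0}$ and $\varphi(0)=\varphi_{0}$, so the modification is consistent at the endpoint $t=0$. The only subtle point is the book-keeping of the sum-space norm in the $r>5$ case; everything else is a direct application of Lions--Magenes.
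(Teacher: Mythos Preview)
Your proof is correct and follows essentially the same approach as the paper. Both arguments identify the threshold $r=5$ via the duality embedding $\mathbb{L}^{(r+1)/r}_{\sigma}\hookrightarrow\V_{\mathrm{div}}'$ (equivalently $(r+1)/r\ge 6/5$), reduce $\u'$ to an $L^{p}$-in-time function with values in $\V_{\mathrm{div}}'$ (respectively $\V_{\mathrm{div}}'+\mathbb{L}^{(r+1)/r}_{\sigma}$ for $r>5$), and then invoke absolute continuity of $W^{1,p}$-in-time functions; for $\varphi$ both use the Lions--Magenes/Aubin--Lions continuity in the triple $\mathrm{H}^{1}\hookrightarrow\mathrm{L}^{2}\hookrightarrow(\mathrm{H}^{1})'$. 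The only cosmetic difference is that the paper first splits at $r=3$ (following the two cases in Definition \ref{weak sol defn}) and then observes the further simplification for $3\le r\le 5$, whereas you organize the case split directly at $r=5$.
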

\begin{proof}
    For $1\leq r <3,$ $1< \frac{4}{3} < \frac{r+1}{r}$,  we have the following continuous embedding: $$\V_{\mathrm{div}}\subset \mathbb{L}^6_{\sigma} \subset \mathbb{L}^4_{\sigma} \subset \mathbb{L}^{r+1}_{\sigma} \subset \mathbb{L}^{\frac{r+1}{r}}_{\sigma} \subset \mathbb{L}^{\frac{4}{3}}_{\sigma} \subset \V_{\mathrm{div}}'.$$ 
    We have from Theorem \ref{LH weak sol},  $\u' \in \mathrm{L}^{\frac{4}{3}}(0, T; \V_{\mathrm{div}}')$. Additionally, we have $\u \in \mathrm{L}^\infty(0, T; \H)$ which implies $\u\in \mathrm{L}^{\frac{4}{3}}(0, T; \V_{\mathrm{div}}')$, so that $\u \in \mathrm{W}^{1, \frac{4}{3}}(0, T; \V_{\mathrm{div}}').$ Therefore from (\cite[Section 5.9, Theorem 2]{Evans}), we deduce that $\u$ is equal  a.e. to an absolutely continuous function from $[0, T]$ to $\V_{\mathrm{div}}'.$
    Similarly if $r\geq 3$, we have $\u' \in \mathrm{L}^{\frac{r+1}{r}}(0, T; \V_{\mathrm{div}}'+\mathbb{L}^{\frac{r+1}{r}}_{\sigma})$ and $\u \in\mathrm{L}^2(0,T;\V_{\mathrm{div}})\subset \mathrm{L}^{\frac{r+1}{r}}(0, T; \V_{\mathrm{div}}'),$ so that $\u\in \mathrm{W}^{1,\frac{r+1}{r}}(0, T; \V'_{\mathrm{div}}+\mathbb{L}^{\frac{r+1}{r}}_{\sigma}),$ and using the same argument as above, we conclude $\u \in C([0, T]; \V'_{\mathrm{div}}+\mathbb{L}^{\frac{r+1}{r}}_{\sigma})$ in this case. Note that for $1\leq r\leq 5$, $\V_{\mathrm{div}}\subset\mathbb{L}^{r+1}_{\sigma}\subset \mathbb{L}^{\frac{r+1}{r}}_{\sigma}\subset \V_{\mathrm{div}}'$, which implies $\u \in C([0, T]; \V'_{\mathrm{div}})$.  For $\varphi,$ one can use the Aubin-Lions lemma to get  $\varphi \in C([0, T]; \mathrm{L}^2(\Omega))$. Indeed,  since $\mathrm{H}^1(\Omega)\subset\mathrm{L}^2(\Omega)\subset(\mathrm{H}^1(\Omega))'$, the embedding $\mathrm{H}^1(\Omega)\subset\mathrm{L}^2(\Omega)$ is compact and the embedding $\mathrm{L}^2(\Omega)\subset(\mathrm{H}^1(\Omega))'$ is continuous,  $\varphi\in\mathrm{L}^{\infty}(0,T;\mathrm{H}^1(\Omega))$ with $\varphi'  \in \mathrm{L}^2(0, T, (\mathrm{H}^1(\Omega))'),$ implies $\varphi\in C([0,T];\mathrm{L}^2(\Omega))$. 
\end{proof}
\begin{corollary}\label{cor4.4}
   Every weak solution $(\u, \varphi)$ of the system \eqref{equ P} are weakly continuous in time $t\in [0, T]$ in $\H\times\mathrm{H}^1$, that is, $(\u, \varphi)\in C_w([0,T];\H)\times C_w([0,T];\mathrm{H}^1).$ 
\end{corollary}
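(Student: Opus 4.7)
My plan is to deduce Corollary \ref{cor4.4} from Corollary \ref{cor1} via the standard Strauss/Lions–Magenes principle: if $f\in L^\infty(0,T;X)\cap C([0,T];Y)$ with $X$ a reflexive Banach space continuously embedded in a Hausdorff space $Y$, then $f\in C_w([0,T];X)$. The two inputs I will combine are (i) the strong continuity in a weaker space obtained in Corollary \ref{cor1} and (ii) the uniform-in-time $\H$ and $\mathrm{H}^1$ bounds coming from $\u\in L^\infty(0,T;\H)$ and $\varphi\in L^\infty(0,T;\mathrm{H}^1)$ in Definition \ref{weak sol defn}.

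For the velocity component, fix $t_0\in[0,T]$ and a sequence $t_n\to t_0$. Corollary \ref{cor1} gives $\u(t_n)\to \u(t_0)$ in $\V_{\mathrm{div}}'$ when $1\leq r\leq 5$, and in $\V_{\mathrm{div}}'+\mathbb{L}^{\frac{r+1}{r}}_\sigma$ when $r>5$; in both cases the limit space is a Hausdorff topological vector space into which $\H$ is continuously (and densely) embedded. Since $\sup_n\|\u(t_n)\|\leq \|\u\|_{L^\infty(0,T;\H)}<\infty$, the Banach–Alaoglu theorem extracts a subsequence $\u(t_{n_k})\rightharpoonup \v$ weakly in $\H$. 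Testing against any $\w\in\mathbb{D}_\sigma(\Omega)$, the weak convergence in $\H$ gives $(\u(t_{n_k}),\w)\to(\v,\w)$, while the strong convergence in the weaker space gives $(\u(t_{n_k}),\w)\to(\u(t_0),\w)$; hence $\v=\u(t_0)$ by density of $\mathbb{D}_\sigma(\Omega)$ in $\H$. A standard Urysohn-subsequence argument (every subsequence admits a further subsequence with the same weak limit) then promotes this to $\u(t_n)\rightharpoonup \u(t_0)$ in $\H$, which is exactly $\u\in C_w([0,T];\H)$.

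For $\varphi$, I would proceed identically. Corollary \ref{cor1} yields $\varphi\in C([0,T];\mathrm{L}^2(\Omega))$, and the weak-solution regularity gives $\varphi\in L^\infty(0,T;\mathrm{H}^1(\Omega))$. Given $t_n\to t_0$, the bound in $\mathrm{H}^1(\Omega)$ together with reflexivity produces a subsequence converging weakly in $\mathrm{H}^1(\Omega)$ to some $\widetilde\varphi$; by compact/continuous embedding $\mathrm{H}^1\hookrightarrow\mathrm{L}^2$, this subsequence converges weakly in $\mathrm{L}^2$ to $\widetilde\varphi$, but strong continuity in $\mathrm{L}^2$ forces $\widetilde\varphi=\varphi(t_0)$. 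Uniqueness of the limit across all subsequences yields $\varphi(t_n)\rightharpoonup \varphi(t_0)$ in $\mathrm{H}^1(\Omega)$, establishing $\varphi\in C_w([0,T];\mathrm{H}^1)$.

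There is no genuine obstacle here; the only point of care is the case $r>5$, where strong continuity from Corollary \ref{cor1} is only in the sum space $\V_{\mathrm{div}}'+\mathbb{L}^{\frac{r+1}{r}}_\sigma$. One has to check that $\H$ embeds continuously in this sum (which is immediate since $\H\hookrightarrow\V_{\mathrm{div}}'$) and that the sum space is Hausdorff so the identification of the weak-$\H$ limit with $\u(t_0)$ goes through by testing against smooth compactly supported divergence-free fields, exactly as above.
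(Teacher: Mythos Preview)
Your argument is correct and is essentially the same approach as the paper's: the paper simply invokes the abstract Strauss/Lions--Magenes principle (cited as \cite[Proposition 1.7.1]{milani_pascal}) with $X=\H\times\mathrm{H}^1$ and $Y=(\V_{\mathrm{div}}'+\mathbb{L}^{\frac{r+1}{r}}_{\sigma})\times(\mathrm{H}^1)'$, while you spell out the standard Banach--Alaoglu plus subsequence--identification proof of that principle in this concrete setting. Your extra care with the $r>5$ case is a nice touch but adds nothing beyond what the cited abstract result already covers.
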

\begin{proof}
 The proof directly follows from (\cite[Proposition 1.7.1] {milani_pascal}) by taking $X = \H\times\mathrm{H}^1$ and $Y = (\V_{\mathrm{div}}'+\mathbb{L}^{\frac{r+1}{r}}_{\sigma})\times (\mathrm{H}^1)'$.    
\end{proof}

\begin{remark}
    From Corollary \ref{cor4.4}, it is clear that the equations \eqref{weak form u} and \eqref{weak form phi} make sense. 
\end{remark}
Let us now provide a proof of Theorem \ref{LH weak sol}. 
\begin{proof}[ Proof of Theorem \ref{LH weak sol}]
We prove the theorem in the following steps:
\vskip 0.1 cm
\noindent\textbf{Step 1:} \emph{Faedo-Galerkin approximation.}
Let $\{\boldsymbol{\omega}_1,   \ldots, \boldsymbol{\omega}_n, \ldots\}$ be a complete orthonormal basis in $\H$ and orthogonal in $ \V_{\mathrm{div}}$,  which are the eigenfunctions of the Stokes operator $\A$ such that $\boldsymbol{\omega}_i\in\D(\A),$ for $i=1,2,\ldots$. 
Also let $\{\psi_1,  \ldots, \psi_n, \ldots \}$ be a complete orthonormal basis in $\mathrm{H}^1$, which are eigenfunctions of the Neumann operator. Let $\H_n=\mathrm{span}\{\boldsymbol{\omega}_1,\ldots,\boldsymbol{\omega}_n\}$ and $\mathrm{V}_n=\mathrm{span}\{\psi_1,\ldots,\psi_n\}$ be the $n$-dimensional subspaces of $\H$ and $\mathrm{H}^1,$ respectively. Let $\P_n$, $\mathrm{P}_n$ be the orthogonal projections from $\V'_{\text{div}}, \mathrm{H}^1$ onto $\H_n, \mathrm{V}_n,$ respectively. We consider  for all $t\in[0,T]$, 
\begin{align*}
    \u_n(\x, t) = \sum_{i=1}^{n}a_n^i(t)\boldsymbol{\omega}_i,  \quad \varphi_n(\x, t) = \sum_{i=1}^{n}b_n^i(t)\psi_i, \quad \mu_n(\x, t) = \sum_{i=1}^{n}c_n^i(t)\psi_i, 
\end{align*}
such that $a_n^i(0)=(\u_0,\boldsymbol{\omega}_i), \, b^i_n(0) = (\varphi_0, \psi_i)$ which satisfies the following system of ODEs:
\begin{equation}\label{projected equ}
\left\{
\begin{aligned}
    & \langle \boldsymbol{u}_n'(t), \v \rangle - \nu (\P_n\Delta\boldsymbol{u}_n(t), \v) + (\P_n(\boldsymbol{u}_n(t)\cdot \nabla \boldsymbol{u}_n(t)), \v ) + \beta ( \P_n(|\u_n|^{r-1}\u_n(t)), \v ) \\&= (\P_n(\mu_n(t) \nabla \varphi_n(t)), \v \rangle + \langle\P_n\mathbb{U}(t), \v\rangle,\\
   &  \langle \varphi_n'(t), \psi\rangle + (\mathrm{P}_n(\boldsymbol{u}_n(t) \cdot \nabla \varphi_n(t)), \psi ) = (\mathrm{P}_n (\text{div}(m(\varphi_n)\nabla \mu_n)), \psi ), \\
   & \mu_n = \mathrm{P}_n(-\Delta\varphi_n + F'(\varphi_n)),\\
    & (\u_n(0), \v) = (\P_n\u(0), \v) , \, (\mathrm{P}_n\mathrm{\varphi}(0), \psi) = (\varphi_n(0), \psi),
\end{aligned}   
\right.
\end{equation}
for all $\v \in \H_n \text{ and } \psi \in \mathrm{V}_n.$ 
Using the local Lipschitz property of the nonlinear terms, one can invoke Carath\'eodory's existence theorem to obtain a   unique local solution to the system \eqref{projected equ}, say $(\u_n, \varphi_n) \in C([0, T'], \H_n)\times C([0, T']; \mathrm{V}_n),$ for some time $0<T'\leq T$. We show that the solution is  global by establishing uniform energy estimates for $(\u_n, \varphi_n)$. 
To achieve this, we utilize  $\u_n$ and $\mu_n$ as our trial functions in $\eqref{projected equ}_1, \eqref{projected equ}_2,$ respectively. So we have the following equations for a.e. $t\in [0, T]$: 
\begin{align}
    &\langle \u'_n(t), \u_n(t) \rangle + \nu(\nabla\u_n(t), \nabla\u_n(t) ) + \langle (\u_n(t)\cdot\nabla)\u_n(t), \u_n(t) \rangle + \beta \langle |\u_n(t)|^{r-1}\u_n(t), \u_n(t) \rangle \no\\
    & \quad = \langle \mu_n(t)\nabla\varphi_n(t), \u_n(t) \rangle + \langle\mathbb{U}_n(t), \u_n(t)\rangle,\label{aprox CBF}\\
   & \langle\varphi'_n(t), \mu_n(t)\rangle + (\u_n(t)\cdot\nabla\varphi_n(t), \mu_n(t)) + \|\sqrt{m(\varphi_n(t))}\nabla\mu_n(t)\|^2 = 0,\label{aprox CH}
\end{align}
where, $\mathbb{U}_n = \P_n\mathbb{U}$.
\vskip 0.1 cm
\noindent\textbf{Step 2:} \emph{Uniform energy estimates:}
From \eqref{aprox CBF} and \eqref{aprox CH}, we can write
\begin{align}
    & \frac{1}{2}\bigg( \|\u_n(t)\|^2 + \|\nabla\varphi_n(t)\|^2 + 2\int_\Omega  F(\varphi_n)dx\bigg) + \nu \int_{0}^{t}\|\nabla\u_n(s)\|^2ds + \beta \int_{0}^{t}\|\u_n(s)\|_{\mathbb{L}_{\sigma}^{r+1}}^{r + 1}ds \no\\&\quad+ \int_{0}^{t}\|\sqrt{m(\varphi_n(s))}\nabla\mu_n(s)\|^2ds \no\\& =  \frac{1}{2}\bigg(\|\u_n(0)\|^2 + \|\nabla\varphi_n(0)\|^2 + 2\int_\Omega  F(\varphi_0) \, dx \bigg) + \int_0^{t}\langle\mathbb{U}_n(t), \u_n(t)\rangle \, ds,\label{aprox energy}
\end{align}
for all $t\in[0,T]$. Note that 
\begin{align*}
    \u^n_0 &:= \P_n\u_0 = \sum_{i=1}^{n}(\u_0, \boldsymbol{\omega}_i )\boldsymbol{\omega}_i, \,
    \varphi^n_0 := \mathrm{P}_n\varphi_0 = \sum_{i=1}^{n}( \varphi_0, \psi_i )\psi_i, \, \mathbb{U}_n = \P_n\mathbb{U}\\
   \text{ so that }\  \|\u^n_0\|^2  &\leq \|\u_0\|^2, \,
    \|\nabla\varphi^n_0\|^2  \leq \|\nabla\varphi_0\|^2 \text{ and } \|\mathbb{U}_n\|_{\V_{\mathrm{div}}'} \leq \|\mathbb{U}\|_{\V_{\mathrm{div}}'}.
\end{align*}
Integrating \eqref{aprox energy} and using  Assumption \ref{prop of F} (1) and (4), we get  for all $t\in[0, T]$:
\begin{align}\label{fin aprox energy}
   & \frac{1}{2}\bigg( \|\u_n(t)\|^2 + \|\nabla\varphi_n(t)\|^2 + 2\int_\Omega  F(\varphi_n)dx\bigg) + \nu \int_{0}^{t}\|\nabla\u_n(s)\|^2ds + \beta \int_{0}^{t}\|\u_n(s)\|_{\mathbb{L}_{\sigma}^{r+1}}^{r + 1}ds \no\\&\qquad+ \int_{0}^{t}\|\sqrt{m(\varphi_n(s))}\nabla\mu_n(s)\|^2ds   \no\\&\quad\leq \frac{1}{2}\Big(\|\u_0\|^2 + \|\nabla\varphi_0\|^2 + 2\int_\Omega  F(\varphi_0) \, dx \Big) + \|\mathbb{U}\|_{\mathrm{L}^2(0, T; \V_{\mathrm{div}}')}, 
\end{align}
and the right-hand side is independent of $n$. 
\vskip 0.1 cm
\noindent\textbf{Step 3:} \emph{Extraction of subsequences:}
We observe that 
\begin{align}
    \| |\u_n|^{r-1}\u_n \|^{\frac{r+1}{r}}_{\mathbb{L}_{\sigma}^{\frac{r+1}{r}}} = \|\u_n\|^{r+1}_{\mathbb{L}_{\sigma}^{r+1}}.\label{absorption}
\end{align}
Since $|\langle\varphi_n(t)\rangle|=|\langle\varphi_n(0)\rangle|\leq\|\varphi_n(0)\|_{\mathrm{L}^1}\leq |\Omega|^{1/2}\|\varphi_0\|_{\mathrm{L}^2}<\infty$ for all $t\in(0,T)$, an application of the Poincar\'e-Wirtinger inequality  implies $\|\varphi_n(t)\|_{\mathrm{L}^2}\leq C,$ for all $t\in[0,T]$, where $C$ is independent of $n$. By leveraging expression \eqref{fin aprox energy}, \eqref{absorption} along with the Banach-Alaoglu theorem, we can extract subsequences $\{\u_{n_k}\}_{k\in\N}, \, \{\varphi_{n_k}\}_{k\in\N}, \text{ and } \{\mu_{n_k}\}_{k\in\N}$ (we will continue to use the notation as  $\{\u_n\}, \, \{\varphi_n\}, \text{ and } \{\mu_n\}$) such that following convergences hold:
\begin{equation}\label{convergence1}
\left\{
    \begin{aligned}
      \u_n & \xrightharpoonup{*} \u \text{ in } \mathrm{L}^\infty(0, T; \H),  \\
   \u_n & \rightharpoonup \u \text{ in } \mathrm{L}^2(0, T; \V_{\mathrm{div}}) , \\
    \u_n & \rightharpoonup \u \text{ in } \mathrm{L}^{r+1}(0, T; \mathbb{L}_\sigma^{r +1}), \\
    |\u_n|^{r-1}\u_n & \rightharpoonup \z \text{ in } \mathrm{L}^{\frac{r+1}{r}}(0, T; \mathbb{L}_\sigma^{\frac{r +1}{r}}) ,\\
    \varphi_n & \xrightharpoonup{*} \varphi \text{ in } \mathrm{L}^\infty(0, T; \mathrm{H}^1) ,  \\
   F(\varphi_n) & \xrightharpoonup{*} F^\ast \text{ in } \mathrm{L}^\infty(0, T; \mathrm{L}^1) ,\\
    \mu_n & \rightharpoonup \mu \text{ in } \mathrm{L}^2(0, T; \mathrm{H^1}) . 
    \end{aligned}
\right.
\end{equation}
Also, by an integration by parts and using Assumption \ref{prop of F} (2), we observe 
\begin{align*}
   \frac{1}{2} \|\nabla\mu_n \|^2 + \frac{1}{2}\|\nabla\varphi_n \|^2 &\geq (\mu_n, -\Delta \varphi_n) \geq \|\Delta\varphi_n\|^2 - C_0 \|\nabla\varphi_n\|^2, 
\end{align*}
which implies
\begin{align}\label{aprox del phi}
    \frac{1}{2} \|\nabla\mu_n \|^2 & \geq \|\Delta\varphi_n\|^2 - \bigg(C_0 +\frac{1}{2}\bigg) \|\nabla\varphi_n\|^2.
\end{align}
Inequality \eqref{aprox del phi} together with \eqref{aprox energy}, the fact that $m_1\leq m(\varphi_n)\leq m_2$ and the elliptic regularity results (cf. \cite{Brezis}) provide the convergence 
\begin{align}
    \varphi_n & \rightharpoonup \varphi \text{ in } \mathrm{L}^2(0, T; \mathrm{H}^2) . \label{phi weak}
\end{align}
\vskip 0.1 cm
\noindent\textbf{Step 4:} \emph{Time derivative estimates:}
To estimate the time derivative, $\u'_n$, we write 
\begin{align}
    \u'_n &= \u^1_n + \u^2_n,\  \text{ where} \\
     \u^1_n &:=  \nu \P_n\Delta\boldsymbol{u}_n - \P_n((\u_n\cdot\nabla)\u_n) + \P_n(\mu_n\nabla\varphi_n) + \mathbb{U}_n, \ \text{ and }\  \u^2_n := \beta\P_n(|\u_n|^{r-1}\u_n).\label{part u}
\end{align}
Our aim is to show $\u^1_n \in \mathrm{L}^{\frac{4}{3}}(0, T; \V_{\mathrm{div}}')$ and $\u^2_n \in \mathrm{L}^{\frac{r+1}{r}}(0, T; \mathbb{L}_\sigma^{\frac{r+1}{r}})$. For $1\leq r<3$, we take $\v \in \V_{\mathrm{div}}$ with $\|\v\|_{\V_{\mathrm{div}}} \leq 1$ and write $\v= \v^1 + \v^2$, where $\v^1 \in \mathrm{span}\{\boldsymbol{\omega}_k\}_{k=1}^n$ and $(\v^2, \boldsymbol{\omega}_k) =0; \, k=1, \ldots, n$. Since the functions $\{\boldsymbol{\omega}_k\}_{k=1}^\infty$ are orthogonal in $\V_{\mathrm{div}}$, $\|\v^1\|_{\V_{\mathrm{div}}} \leq \|\v\|_{\V_{\mathrm{div}}} \leq 1$. Utilizing \eqref{aprox CBF} and expression of $\u_n^1,$ we derive 
\begin{align*}
    | \langle \u^1_n, \v \rangle| & \leq \nu \|\u_n\|_{\V_{\mathrm{div}}}\|\v\|_{\V_{\mathrm{div}}} + C\|\u_n\|_{\H}^\frac{1}{2}\|\u_n\|^\frac{3}{2}_{\V_{\mathrm{div}}}\|\v\|_{\V_{\mathrm{div}}}+C\|\nabla\mu_n\| \|\varphi_n\|^{\frac{1}{2}}\|\varphi_n\|^{\frac{1}{2}}_{\mathrm{H}^1}\|\v\|_{\V_{\mathrm{div}}}\nonumber\\&\quad+\|\mathbb{U}_n\|_{\V_{\mathrm{div}}'}\|\v\|_{\V_{\mathrm{div}}} \no\\
    & \leq \nu \|\u_n\|_{\V_{\mathrm{div}}} + C\|\u_n\|_{\H}^\frac{1}{2}\|\u_n\|^\frac{3}{2}_{\V_{\mathrm{div}}}  +C\|\nabla\mu_n\| \|\varphi_n\|^{\frac{1}{2}}\|\varphi_n\|^{\frac{1}{2}}_{\mathrm{H}^1}+\|\mathbb{U}_n\|_{\V_{\mathrm{div}}'},
    \end{align*}
    since $\|\v\|_{\V_{\mathrm{div}}} \leq 1,$ where we have used H\"older's and Gagliardo-Nirenberg's inequalities. 
    Consequently, for all $\v\in\mathrm{L}^4(0,T;\V_{\mathrm{div}})$, we have 
    \begin{align}\label{u_n^1 norm}
 &\left|\int_0^T \langle \u^1_n(t), \v(t)\rangle dt\right| \nonumber\\ 
   & \leq \bigg[\nu\bigg(\int_0^T\|\u_n(t)\|^2_{\V_{\mathrm{div}}}\, dt\bigg)^{\frac{1}{2}} 
 + \Big( \int_0^T \|\nabla\mu_n(t)\|^2 \|\varphi_n(t)\|^2_{\mathrm{H}^1} \, dt\Big)^{\frac{1}{2}} + \Big(\int_0^T\|\mathbb{U}_n(t)\|^2_{\V_{\mathrm{div}}'} \, dt \Big)^\frac{1}{2}\bigg]\nonumber\\&\qquad\times\bigg(\int_0^T\|\v(t)\|^2_{\V_{\mathrm{div}}} \, dt \bigg)^{\frac{1}{2}} + \bigg(\int_0^T\|\u_n(t)\|^{\frac{3}{2}}_{\H}\|\u_n(t)\|^2_{\V_{\mathrm{div}}} \, dt \bigg)^\frac{1}{2}\bigg(\int_0^T\|\v(t)\|^4_{\V_{\mathrm{div}}} \, dt \bigg)^{\frac{1}{4}}.
\end{align}
For $1\leq r < 3$,  
we know that the embedding $\V_{\mathrm{div}} \hookrightarrow \mathbb{L}^{r+1}_\sigma$ is dense. Decomposing $\v$ in a similar fashion and proceeding as above, we have
\begin{align*}
    |\langle \u^2_n, \v \rangle|  = |\beta\langle|\u_n|^{r-1}\u_n, \P_n\v\rangle| \,  
    \leq  \beta\| |\u_n|^{r-1}\u_n\|_{\mathbb{L}_{\sigma}^{\frac{r+1}{r}}}\|\P_n\v\|_{\mathbb{L}_{\sigma}^{r+1}} .
   \end{align*}
   Therefore, for all $\v\in \mathrm{L}^{r+1}(0, T; \mathbb{L}_{\sigma}^{r+1})$, we have 
   \begin{align}
   	\left|\int_0^T\langle \u^2_n(t), \v(t) \rangle dt \right|
   & \leq \beta\bigg( \int_0^T \|\u_n(t)\|^{r+1}_{\mathbb{L}^{r+1}_{\sigma}} \bigg)^{\frac{r}{r+1}}\left(\int_0^T\|\v(t)\|_{\mathbb{L}_{\sigma}^{r+1}}^{r+1}\right).\label{u_n^2 norm}
\end{align}
From \eqref{u_n^1 norm} and \eqref{u_n^2 norm}, we conclude  for $1\leq r < 3$,
\begin{align}\label{316}
    \{\u'_n\} \text{ is uniformly bounded in } \mathrm{L}^\frac{4}{3}(0, T; \V_{\mathrm{div}}') + \mathrm{L}^{\frac{r+1}{r}}(0, T; \mathbb{L}_{\sigma}^{\frac{r+1}{r}})\subset \mathrm{L}^{\frac{4}{3}}(0, T; \V_{\mathrm{div}}'). 
\end{align}
For $r \geq 3$, again we write $\u'_n= \u^1_n + \u^2_n$  as defined in \eqref{part u} and calculate the followings: 
\begin{align}
& \left|\int_0^T \langle \u^1_n(t), \v(t)\rangle dt\right| \leq  \nu \int_0^T\|\nabla\u_n(t)\|\|\nabla\v(t)\| \, dt+ \|\u_n(t)\|_{\mathbb{L}_{\sigma}^{r+1}}\|\u_n(t)\|_{\mathbb{L}_{\sigma}^{\frac{2(r+1)}{r-1}}}\|\v(t)\|_{\V_{\mathrm{div}}}\no\\&\quad+\int_0^T\|\nabla\mu_n(t)\|\|\varphi(t)\|_{\mathrm{L}^4}\|\v(t)\|_{\mathbb{L}_{\sigma}^4} \, dt + \int_0^T\|\mathbb{U}_n(t)\|_{\V_{\mathrm{div}}'}\|\v(t)\|_{\V_{\mathrm{div}}} \, dt\no\\  
  &  \leq \Big[\nu\Big(\int_0^T\|\u_n(t)\|^2_{\V_{\mathrm{div}}} \, dt\Big)^{\frac{1}{2}}+ \Big(\int_0^T\|\u_n(t)\|^{\frac{2(r+1)}{r-1}}_{\mathbb{L}_{\sigma}^{r+1}}\|\u_n(t)\|^{\frac{2(r-3)}{r-1}}_{\H} \, dt \Big)^\frac{1}{2} +\Big(\int_0^T \|\mathbb{U}_n(t)\|_{\V_{\mathrm{div}}'}^2 \, dt \Big)^{\frac{1}{2}} \no\\&\quad + \Big(\int_0^T\|\nabla\mu_n(t)\|^2\|\varphi_n(t)\|^2_{\mathrm{H}^1} \, dt \Big)^{\frac{1}{2}} \Big]\Big(\int_0^T\|\v(t)\|^2_{\V_{\mathrm{div}}} \, dt\Big)^\frac{1}{2}, \quad \forall \v\in \mathrm{L}^2(0, T;\V_{\mathrm{div}}),
    \end{align}
and 
\begin{align}
\left|\int_0^T\langle \u^2_n(t), \v(t) \rangle dt \right|\leq  
\beta\bigg( \int_0^T \|\u_n(t)\|^{r+1}_{\mathbb{L}^{r+1}_{\sigma}} \bigg)^{\frac{r}{r+1}}\left(\int_0^T\|\v(t)\|_{\mathbb{L}_{\sigma}^{r+1}}^{r+1}\right)^{\frac{1}{r+1}}, \quad \forall \v\in\mathrm{L}^{r+1}(0,T;\mathbb{L}^{r+1}_{\sigma}).
\end{align}
This gives for $r\geq 3$,
\begin{align}\label{time cgts u}
    \{\u'_n\} \text{ is uniformly bounded in } \mathrm{L}^2(0, T; \V_{\mathrm{div}}') + \mathrm{L}^{\frac{r+1}{r}}(0, T; \mathbb{L}_{\sigma}^{\frac{r+1}{r}})\subset \mathrm{L}^{\frac{r+1}{r}}(0, T;\V_{\mathrm{div}}'+ \mathbb{L}_{\sigma}^{\frac{r+1}{r}}).
\end{align}
To estimate $\varphi'_n$,  we first recall the expression from $\eqref{projected equ}_3$ and bound the following using integration by parts and Sobolev embeddings:
\begin{align}
&\left|\int_0^T\langle\varphi'_n(t),\psi(t)\rangle dt\right| \nonumber\\&\leq \int_0^T\|\u_n(t)\|_{\mathbb{L}^3_{\sigma}}\|\varphi_n(t)\|_{\mathrm{L}^3}\|\nabla\psi(t)\| \, dt + \int_0^T\|\sqrt{m(\varphi_n(t))}\nabla\mu_n(t)\|\|\nabla\psi(t)\| \, dt\no\\
& \leq\Big[\bigg(\int_0^T\|\u_n(t)\|^2_{\V_{\mathrm{div}}} \|\varphi_n(t)\|\|\nabla\varphi_n(t)\| \, dt\bigg)^\frac{1}{2} + \bigg(\int_0^T\|\sqrt{m(\varphi_n(t))}\nabla\mu_n(t)\|^2 \, dt\bigg)^\frac{1}{2}\Big]\no\\
&\quad \times \bigg(\int_0^T\|\nabla\psi(t)\|^2 \, dt\bigg)^\frac{1}{2},  \quad \forall \psi\in\mathrm{L}^2(0,T;\mathrm{H}^1).
    \end{align}
This immediately concludes 
\begin{align}\label{time cgts}
      \varphi'_n & \rightharpoonup \varphi' \ \text{ in }\  \mathrm{L}^2(0, T; (\mathrm{H}^1)').
    \end{align}

\vskip 0.1 cm
\noindent\textbf{Step 5:} \emph{Strong convergences:}
With the help of \eqref{convergence1}, \eqref{time cgts}, \eqref{316} and \eqref{time cgts u}, and the Aubin-Lions compactness lemma (\cite[Theorem 5]{JSimon}), we have  the following strong convergences along a subsequence (using the same notation):
\begin{align}
    \u_n & \rightarrow \u \ \text{ in }\  \mathrm{L}^2(0, T; \H),\label{u strong}\\
    \varphi_n & \rightarrow \varphi \ \text{ in }\  \mathrm{L}^2(0, T; \mathrm{H^1}) \ \text{ and }\ \varphi_n  \rightarrow \varphi \ \text{ in }\ C([0,T];\mathrm{L}^2).\label{phi strong}
\end{align}
Using the strong convergence of $\u_n$ in $\eqref{u strong},$ we can also choose a further subsequence such that
\begin{align}
    \u_n(t) \rightarrow \u(t) \text{ in } \H \, \text{ for a.e. } t \in (0, T).  
\end{align}
In fact, along a further subsequence (denoted using the same notation), we have 
\begin{align}\label{424}\mbox{$\u_n\to \u$ \ for a.e.\ $(x,t)\in\Omega\times(0,T)$\  and \ $\varphi_n\to\varphi$\  for a.e. \ $x\in\Omega$ and all $t\in[0,T].$ }
\end{align}

\vskip 0.1 cm
\noindent\textbf{Step 6:} \emph{Passage to limit:}
Owing to the convergence \eqref{424} and (\cite[Lemma 1.3]{Lions}) we obtain
\begin{align}\label{absorption cgts}
    |\u_n|^{r-1}\u_n & \rightharpoonup |\u|^{r-1}\u \ \text{ in }  \ \mathrm{L}^{\frac{r+1}{r}}(0, T; \mathbb{L}_\sigma^{\frac{r +1}{r}}).
\end{align}
 Then from uniqueness of weak limit we conclude $\z = |\u|^{r-1}\u$ (see \eqref{convergence1}).  Moreover, using pointwise convergence of $\varphi$ in \eqref{424} and continuity of $F$, from the convergence in $\eqref{convergence1}_6$ and the uniqueness of weak limit, we get $F^\ast = F(\varphi).$  Now we can pass to the limit as $n\to \infty$ in the approximated system \eqref{projected equ}. For this purpose, we multiply the equation $\eqref{projected equ}_1 \text{ and } \eqref{projected equ}_2$ by $\chi \in C_c^\infty([0, T))$ with $\chi(T) = 0$ and then integrate these equation from $0$ to $T$. Passing to the limit in the linear terms using the convergences given by \eqref{convergence1} is standard. To pass to the limit in the convective term in the CBF equation (damped Navier-Stokes), one can use the convergence \eqref{absorption cgts} and arguments similar to the one in \cite{robinson,Hajduk+Robinson_2017,MTMohan,MTMohan1}. 
 The pointwise convergence in \eqref{424}, continuity of $m$ and LDCT ensure that
\begin{align}\label{mobility cgts}
    m(\varphi_n) \rightarrow m(\varphi) \text{ strongly in } \mathrm{L}^p( \Omega\times(0, T)) \ \  \text{ for all }\ p \geq 1. 
\end{align}
Thus we have 
\begin{align}
    \int_0^T(m(\varphi_n(t))\nabla\mu_n(t), \chi(t)\nabla\psi)dt \rightarrow \int_0^T(m(\varphi(t))\nabla\mu(t), \chi(t)\nabla\psi)dt \quad \text{as } n \rightarrow \infty.
\end{align}
Using the strong convergences of $\u_n, \varphi_n$ passing to the limit in the other nonlinear terms like those in CHNS   system is standard  and details can be found in \cite{colli_weak}. Therefore, we can pass to limit in \eqref{projected equ} and the limit satisfies \eqref{weak form u} and \eqref{weak form phi}. 
\vskip 0.1 cm
\noindent\textbf{Step 7:} \emph{Energy inequality:}
To show the energy inequality satisfied by the limit function $(\u, \varphi)$, we need to pass to the limit in \eqref{fin aprox energy}.
Using pointwise convergence of $\u_n, \varphi_n$ in \eqref{424} and weak convergence of $\u_n$ in $\eqref{convergence1}_2, \, \eqref{convergence1}_3$, and the weak lowersemicontinuity property of the norms, we obtain 
\begin{align}\label{liminf1}
    &\liminf_{n\rightarrow\infty}\bigg\{\frac{1}{2}\big( \|\u_n(t)\|^2 + \|\nabla\varphi_n(t)\|^2 \big)+ \int_\Omega F(\varphi_n)dx + \nu \int_{0}^{t}\|\nabla\u_n(s)\|^2ds + \beta \int_{0}^{t}\|\u_n(s)\|^{r + 1}_{\mathbb{L}^{r+1}_{\sigma}}ds\bigg\} \no\\
    &\geq \frac{1}{2}\big( \|\u(t)\|^2 + \|\nabla\varphi(t)\|^2\big) + \int_\Omega F(\varphi)dx + \nu \int_{0}^{t}\|\nabla\u(s)\|^2ds + \beta \int_{0}^{t}\|\u(s)\|^{r + 1}_{\mathbb{L}^{r+1}_{\sigma}}ds.
\end{align}
We also have 
\begin{align}
    \sqrt{m(\varphi_n)}\nabla\mu_n \rightarrow \sqrt{m(\varphi)}\nabla\mu \ \text{ weakly } \ \mathrm{L}^2(\Omega\times(0, T)),
\end{align}
which is a consequence of \eqref{mobility cgts} and $\eqref{convergence}_7$. Thus once again using the weakly lowersemicontinuity property of the norms,  we deduce
\begin{align}\label{liminf2}
    \liminf_{n\rightarrow\infty}\int_0^t\|\sqrt {m(\varphi_n(s))}\nabla\mu_n(s)\|^2 ds \geq \int_0^t\|\sqrt{ m(\varphi(s))}\nabla\mu(s)\|^2 ds.
\end{align}
Using the strong convergence in \eqref{u strong} and the fact that $\U_n\to\U$ in $\mathrm{L}^2(0, T;\V'_{\text{div}})$, we have
\begin{align}\label{U cgts}
    \lim_{n \to \infty }\int_0^t\langle\mathbb{U}_n(s), \u_n(s)\rangle ds \to \int_0^t\langle\mathbb{U}(s), \u(s)\rangle ds,
\end{align}
Combining \eqref{liminf1}, \eqref{liminf2} and \eqref{U cgts},
one can derive the energy inequality \eqref{energy inequality1}.

\vskip 0.1 cm
\noindent\textbf{Step 8:} \emph{Convergence of the initial data:}
It follows from  Corollary \ref{cor4.4} that
\begin{align}\label{wcs}
  (\u(t), \v)\to (\u(0), \v), \quad  \langle\varphi(t), \psi\rangle \to \langle\varphi(0), \psi), \text{ as } t\to 0^+,
\end{align}
 for all $\v\in \H$ and $\psi\in (\mathrm{H}^1)'.$ Also from the energy inequality \eqref{energy inequality1} we obtain
\begin{align*}
\limsup_{t\to 0^+}\frac{1}{2}\bigg(\|\u(t)\|^2 + \|\nabla\varphi(t)\|^2 + 2\int_{\Omega}F(\varphi)dx\bigg)\leq  \frac{1}{2}\bigg(\|\u_0\|^2 + \|\nabla\varphi_0\|^2 + 2\int_{\Omega}F(\varphi_0)dx\bigg).
\end{align*}
  On the other hand, using the fact that $\u(t)$ is weakly continuous in $\H$ and $\varphi(t)$ is weakly continuous in $\mathrm{H}^1$ (Corollary \ref{cor4.4}), one can deduce 
\begin{align*}
\liminf_{t\to 0^+}\frac{1}{2}\bigg(\|\u(t)\|^2 + \|\nabla\varphi(t)\|^2\bigg) \geq \frac{1}{2}\bigg(\|\u_0\|^2 + \|\nabla\varphi_0\|^2\bigg).
\end{align*}
Furthermore, using the strong convergence of $\varphi$,  \eqref{phi strong} and Assumption \ref{prop of F}  (1),  along with LDCT yield
\begin{align*}
\lim_{t\to 0^+}\int_\Omega F(\varphi(t)) \, dx = \int_\Omega F(\varphi_0) \, dx.
\end{align*}
Combining all the above estimates we obtain
\begin{align*}
\lim_{t\to 0^+}\frac{1}{2}\bigg(\|\u(t)\|^2 + \|\nabla\varphi(t)\|^2 + 2\int_{\Omega}F(\varphi)dx\bigg) = \frac{1}{2}\bigg(\|\u_0\|^2 + \|\nabla\varphi_0\|^2 + 2\int_{\Omega}F(\varphi_0)dx\bigg),
\end{align*}  which again imply 
\begin{align}\label{ncs}
   \|\u(t)\| \to \|\u_0\|, \quad \|\nabla\varphi(t)\| \to \|\nabla\varphi_0\| \ \text{ as }\ t\to0^+.
\end{align}
Since $\varphi\in C([0,T];\L^2),$ it is immediate that $\|\varphi(t)-\varphi_0\|$ as $\to 0^+$.  Combining \eqref{wcs} and \eqref{ncs}, we have \eqref{in_data_cgts}, as weak convergence  together with  the strong convergence in norm in Hilbert spaces imply the strong convergence.
\end{proof}

\begin{remark}\label{rem4.7}
  Since $m_1\leq m(\varphi)\leq m_2,$  from \eqref{fin aprox energy}, it follows that  $\nabla\mu$ is bounded in $\mathrm{L}^2(0, T;\mathbb{L}^2)$. Owing to Assumption \ref{prop of F} (3), we obtain from the expression  of $\mu=-\Delta\varphi+F'(\varphi)$ and Sobolev's inequality that \begin{align*}|\overline{\mu}|&=|\Omega||(\mu, 1)|\leq |\Omega|\int_{\Omega}|F'(\varphi)| dx \leq |\Omega|C_1\int_{\Omega}|\varphi|^pdx+|\Omega|^2C_2\nonumber\\&\leq  C\|\varphi\|_{\mathrm{H}^1}^p+C<\infty, \text{ for a.e. }t\in (0, T), \end{align*} for all $1\leq p\leq 6$. Then by using the Poincar\'e-Wirtinger inequality inequality, we infer that $\mu\in\mathrm{L}^2(0,T;\mathrm{H}^1)$.
\end{remark}

Now, we  perform some a-priori and a-posterior estimate of $\varphi$ to show the higher regularity of $\varphi$, which we need in the uniqueness result. We introduce the following necessary lemmas:
\begin{lemma}\label{higher regularity phi}
Let $(\u, \varphi)$ be a Leray-Hopf weak solution of the system \eqref{equ P}. For any initial data $(\u_0, \varphi_0) \in \H \times \mathrm{H}^1,$ the relative concentration of fluids, $\varphi \in \mathrm{L}^4(0, T; \mathrm{H}^2).$
\end{lemma}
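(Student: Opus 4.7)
The plan is to upgrade the bound $\varphi\in\mathrm{L}^2(0,T;\mathrm{H}^2)$ coming from Theorem \ref{LH weak sol} by combining the chemical potential identity $\mu=-\Delta\varphi+F'(\varphi)$ with elliptic regularity for the homogeneous Neumann Laplacian. Once $\Delta\varphi\in\mathrm{L}^4(0,T;\mathrm{L}^2)$ is established, the standard estimate $\|\varphi\|_{\mathrm{H}^2}\leq C(\|\Delta\varphi\|+\|\varphi\|_{\mathrm{L}^2})$ together with the uniform control $\varphi\in\mathrm{L}^\infty(0,T;\mathrm{H}^1)$ immediately delivers $\varphi\in\mathrm{L}^4(0,T;\mathrm{H}^2)$, so the whole lemma reduces to a pointwise-in-time bound on $\|\Delta\varphi\|$.

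To produce such a bound, I would take the $\mathrm{L}^2$-inner product of the identity for $\mu$ with $-\Delta\varphi$ at a.e.\ $t$ for which $\varphi(t)\in\mathrm{H}^2$. Integrating by parts twice and using the Neumann conditions $\partial_\n\varphi=0$ and $\partial_\n\mu=0$ gives
\begin{align*}
\|\Delta\varphi\|^2+\int_\Omega F''(\varphi)|\nabla\varphi|^2\,dx=(\nabla\mu,\nabla\varphi).
\end{align*}
Applying Assumption \ref{prop of F}(2) to bound $F''(\varphi)\geq -C_0$ from below and then Cauchy--Schwarz on the right yields the pointwise-in-time inequality
\begin{align*}
\|\Delta\varphi\|^2\leq \|\nabla\mu\|\,\|\nabla\varphi\|+C_0\|\nabla\varphi\|^2.
\end{align*}

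Squaring this inequality and integrating in time then produces
\begin{align*}
\int_0^T\|\Delta\varphi(t)\|^4\,dt\leq 2\|\nabla\varphi\|^2_{\mathrm{L}^\infty(0,T;\mathrm{L}^2)}\int_0^T\|\nabla\mu(t)\|^2\,dt+2C_0^2\,T\,\|\nabla\varphi\|^4_{\mathrm{L}^\infty(0,T;\mathrm{L}^2)},
\end{align*}
which is finite because $\varphi\in\mathrm{L}^\infty(0,T;\mathrm{H}^1)$ and $\nabla\mu\in\mathrm{L}^2(0,T;\mathrm{L}^2)$ (the latter obtained from the energy inequality via $m\geq m_1>0$, as in Remark \ref{rem4.7}). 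The only subtle point I foresee is justifying the integration by parts $\int_\Omega F'(\varphi)\Delta\varphi\,dx=-\int_\Omega F''(\varphi)|\nabla\varphi|^2\,dx$, since $F''$ is allowed to grow polynomially by Assumption \ref{prop of F}(5); however, at almost every $t\in(0,T)$ one has $\varphi(t)\in\mathrm{H}^2\hookrightarrow\mathrm{L}^\infty(\Omega)$, so $F''(\varphi(t))$ is pointwise bounded and the integrand lies safely in $\mathrm{L}^1(\Omega)$. As a safety net, the whole argument can alternatively be carried out on the Faedo--Galerkin approximants $\varphi_n$, where $F''(\varphi_n)$ is smooth and the identity is automatic, and the resulting uniform $\mathrm{L}^4(0,T;\mathrm{H}^2)$ bound can then be transferred to $\varphi$ by weak lower semicontinuity.
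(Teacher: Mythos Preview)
Your proposal is correct and follows essentially the same route as the paper: the paper also multiplies $\mu=-\Delta\varphi+F'(\varphi)$ by $-\Delta\varphi$, uses Assumption \ref{prop of F}(2) to obtain $\|\Delta\varphi\|^2\leq\|\nabla\mu\|\|\nabla\varphi\|+C_0\|\nabla\varphi\|^2$, squares and integrates in time, and invokes elliptic regularity. Your version is in fact slightly more careful in justifying the integration by parts via $\varphi(t)\in\mathrm{H}^2\hookrightarrow\mathrm{L}^\infty$ at a.e.\ $t$, a point the paper leaves implicit.
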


\begin{proof}
    Consider the following  equation for a.e. $t\in(0,T)$ in $\mathrm{L}^2$:
    \begin{align*}
        \mu &= -\Delta\varphi + F'(\varphi).
    \end{align*}
  Multiplying both side of the equation by $\Delta\varphi$ gives rise to the following equation:
  \begin{align*}
      \|\Delta\varphi\|^2 & = (F'(\varphi), \Delta\varphi) - (\mu, \Delta\varphi) \\
      & \leq \|\nabla\mu\| \|\nabla\varphi\| + C_0\|\nabla\varphi\|^2,
  \end{align*}
 where we have used the Assumption \ref{prop of F} (2).  Therefore, taking the square on both sides of the above equation and then integrating it over the time interval $(0, T)$,
  \begin{align*}
      \int_0^T\|\Delta\varphi(t)\|^4 dt &\leq \int_0^T\|\nabla\mu(t)\|^2 \|\nabla\varphi(t)\|^2 dt + C_0\int_0^T\|\nabla\varphi(t)\|^2 dt\\
      & \leq \|\nabla\mu\|^2_{\mathrm{L}^2(0, T; \mathrm{L}^2)}\|\varphi\|^2_{\mathrm{L}^\infty(0, T; \mathrm{H}^1)} + C_0 \|\varphi\|^4_{\mathrm{L}^\infty(0, T; \mathrm{H}^1)}.
  \end{align*}
 By the elliptic regularity theory (cf. \cite[Theorem 9.26]{Brezis}), one can conclude that $\varphi \in \mathrm{L}^4(0, T; \mathrm{H}^2).$ 
\end{proof}

\begin{remark}
		Note that in Lemma \ref{lem4.9} below, we have taken a stronger assumption on $\varphi_0$. Therefore, in that context we need $ 1\leq p\leq 4$ in Assumption \ref{prop of F} (3) and (5).
\end{remark}

\begin{lemma}\label{lem4.9}
    Let $(\u, \varphi)$ be a Leray-Hopf weak solution of \eqref{equ P} with $m\equiv 1$ and the initial condition $(\u_0, \varphi_0) \in \H \times \mathrm{H}^2$ so that $\mu_0 = -\Delta\varphi_0 + F'(\varphi_0)\in \mathrm{L}^2$. Then the chemical potential $\mu$ satisfies $\mu \in \mathrm{L}^\infty(0, T; \mathrm{L}^2) \cap \mathrm{L}^2(0, T; \mathrm{H}^2).$
\end{lemma}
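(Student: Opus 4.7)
The plan is to derive a higher-order energy estimate for the chemical potential by differentiating the relation $\mu=-\Delta\varphi+F'(\varphi)$ in time and testing against $\mu$, exploiting the fact that with $m\equiv 1$ the Cahn-Hilliard equation simplifies to $\varphi_t=\Delta\mu-\u\cdot\nabla\varphi$. I would perform the formal computations at the Faedo-Galerkin level introduced in the proof of Theorem \ref{LH weak sol}, where the eigenfunctions $\{\psi_i\}$ of the Neumann Laplacian automatically enforce the boundary conditions needed for the integrations by parts, and then pass to the limit by weak lower semicontinuity.

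On the Galerkin level, differentiating $\mu_n=\mathrm{P}_n(-\Delta\varphi_n+F'(\varphi_n))$ in time and testing with $\mu_n\in\mathrm{V}_n$ gives, after two integrations by parts (the boundary terms vanish because $\partial_{\boldsymbol{n}}\varphi_n=\partial_{\boldsymbol{n}}\mu_n=0$) and substitution of $\partial_t\varphi_n=\Delta\mu_n-\mathrm{P}_n(\u_n\cdot\nabla\varphi_n)$ from the CH equation, the key identity
\begin{align*}
\frac{1}{2}\frac{d}{dt}\|\mu_n\|^2+\|\Delta\mu_n\|^2 = (\u_n\cdot\nabla\varphi_n,\Delta\mu_n) + (F''(\varphi_n)\Delta\mu_n,\mu_n) - (F''(\varphi_n)\u_n\cdot\nabla\varphi_n,\mu_n).
\end{align*}
The three terms on the right-hand side then have to be bounded so that a fraction of $\|\Delta\mu_n\|^2$ can be absorbed into the left-hand side. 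The convective term is handled via H\"older, Gagliardo-Nirenberg, and Young (for instance writing $\|\u_n\|_{\mathbb{L}^3}\|\nabla\varphi_n\|_{\mathrm{L}^6}\|\Delta\mu_n\|$ and interpolating $\|\u_n\|_{\mathbb{L}^3}\leq C\|\u_n\|^{1/2}\|\nabla\u_n\|^{1/2}$, $\|\nabla\varphi_n\|_{\mathrm{L}^6}\leq C\|\varphi_n\|_{\mathrm{H}^2}$). The two $F''$-terms are controlled by invoking Assumption \ref{prop of F}(5) in the form $|F''(s)|\leq C(1+|s|^3)$, valid under the sharpened restriction $p\leq 4$ highlighted in the remark preceding the lemma, together with $\mathrm{H}^1\hookrightarrow\mathrm{L}^6$ and the interpolation $\|\varphi_n\|_{\mathrm{L}^q}\leq C\|\varphi_n\|_{\mathrm{H}^1}^{1-\theta}\|\varphi_n\|_{\mathrm{H}^2}^{\theta}$; the resulting powers of $\|\varphi_n\|_{\mathrm{H}^2}$ are then tamed by the elliptic bound $\|\varphi_n\|_{\mathrm{H}^2}\leq C(1+\|\mu_n\|+\|F'(\varphi_n)\|)$ coming from $-\Delta\varphi_n=\mu_n-F'(\varphi_n)$ with Neumann data, the $\|\mu_n\|$-contribution being folded into the Gr\"onwall coefficient.

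After absorption, one obtains a differential inequality
\begin{align*}
\frac{d}{dt}\|\mu_n\|^2+\|\Delta\mu_n\|^2 \leq \Phi_n(t)\|\mu_n\|^2+\Psi_n(t),
\end{align*}
where $\Phi_n,\Psi_n$ are bounded in $\mathrm{L}^1(0,T)$ uniformly in $n$ thanks to $\u_n\in\mathrm{L}^\infty(0,T;\H)\cap\mathrm{L}^2(0,T;\V_{\mathrm{div}})$ from Theorem \ref{LH weak sol}, $\varphi_n\in\mathrm{L}^\infty(0,T;\mathrm{H}^1)\cap\mathrm{L}^4(0,T;\mathrm{H}^2)$ from Lemma \ref{higher regularity phi}, and $\nabla\mu_n\in\mathrm{L}^2(0,T;\mathrm{L}^2)$ from Remark \ref{rem4.7}. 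Since the hypothesis $\mu_0\in\mathrm{L}^2$ provides $\|\mu_n(0)\|\leq\|\mu_0\|$, Gr\"onwall's inequality yields the $\mathrm{L}^\infty(0,T;\mathrm{L}^2)$-bound on $\mu_n$ uniformly in $n$; integrating the identity in time then furnishes a uniform $\mathrm{L}^2(0,T;\mathrm{L}^2)$-bound on $\Delta\mu_n$, which, combined with the $\nabla\mu_n$-bound and elliptic regularity for the Neumann Laplacian, delivers the $\mathrm{L}^2(0,T;\mathrm{H}^2)$-estimate. Passing $n\to\infty$ by weak-$*$ lower semicontinuity transfers these bounds to $\mu$. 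The main obstacle is the estimation of the $F''$-terms in three dimensions: the cubic growth permitted by $p\leq 4$ is borderline, and the available mix of $\mathrm{L}^4_t\mathrm{H}^2_x$ and $\mathrm{L}^\infty_t\mathrm{H}^1_x$ regularity for $\varphi$ has to be exploited very carefully to keep $\Phi_n,\Psi_n$ in $\mathrm{L}^1(0,T)$ uniformly in $n$; this is precisely the reason $p$ must be sharpened from $5$ to $4$ for this lemma.
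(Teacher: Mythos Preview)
Your proposal is correct and follows essentially the same strategy as the paper: differentiate $\mu_n$ in time at the Galerkin level, test with $\mu_n$, obtain the identity $\tfrac{1}{2}\tfrac{d}{dt}\|\mu_n\|^2+\|\Delta\mu_n\|^2=(\u_n\cdot\nabla\varphi_n,\Delta\mu_n)+(F''(\varphi_n)\Delta\mu_n,\mu_n)-(F''(\varphi_n)\u_n\cdot\nabla\varphi_n,\mu_n)$, absorb a fraction of $\|\Delta\mu_n\|^2$, and close by Gr\"onwall. The only tactical differences are that the paper splits the convective term as $\|\u\|_{\mathbb{L}^4}\|\nabla\varphi\|_{\mathbb{L}^4}\|\Delta\mu\|$ rather than your $\mathbb{L}^3$-$\mathrm{L}^6$ split, and handles the $F''$-terms by Agmon's inequality $\|\varphi\|_{\mathrm{L}^\infty}\leq C\|\varphi\|_{\mathrm{H}^1}^{1/2}\|\varphi\|_{\mathrm{H}^2}^{1/2}$ to bound $\|F''(\varphi)\|_{\mathrm{L}^\infty}$ directly, whereas you route through $\mathrm{H}^1\hookrightarrow\mathrm{L}^6$ and the elliptic bound on $\|\varphi_n\|_{\mathrm{H}^2}$; both lead to the same $\mathrm{L}^1_t$ Gr\"onwall coefficients. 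One small inaccuracy: $\|\mu_n(0)\|\leq\|\mu_0\|$ is not literally true since $\mu_n(0)=\mathrm{P}_n(-\Delta\mathrm{P}_n\varphi_0+F'(\mathrm{P}_n\varphi_0))$ involves $F'(\mathrm{P}_n\varphi_0)$ rather than $\mathrm{P}_nF'(\varphi_0)$, but a uniform bound $\|\mu_n(0)\|\leq C(\|\varphi_0\|_{\mathrm{H}^2})$ follows from $\mathrm{P}_n\varphi_0\to\varphi_0$ in $\mathrm{H}^2$, which is all Gr\"onwall needs.
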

\begin{proof}
   This can be shown by following \cite[Lemma 5.5]{bosia_pullback_attractor}. For the sake of completeness, we provide a proof. We consider the equation $\eqref{projected equ}_3$. Then we find the evolution equation satisfied by $\mu_n$, that is, we formally differentiate equation $\eqref{projected equ}_3$  and take the inner product with $\mu_n$ in $\mathrm{L}^2$. For simplicity of notation, we skip the suffix $n$, but all the calculations have to be understood at the Faedo-Galerkin approximation level. Therefore,  we get 
    \begin{align*}
        \langle\mu', \mu\rangle + ( \Delta\mu, \Delta\mu) &= (\u\cdot\nabla\varphi, \Delta\mu) - \langle F''(\varphi)\u\cdot\nabla\varphi, \mu\rangle + \langle F''(\varphi)\Delta\mu, \mu\rangle \nonumber\\&=(\u\cdot\nabla\varphi, \Delta\mu) - \langle \nabla(F'(\varphi))\cdot\u, \mu\rangle + \langle F''(\varphi)\Delta\mu, \mu\rangle. 
    \end{align*}
   where the boundary condition $\eqref{equ P}_5$ imply that $\frac{\partial\varphi_t}{\partial\n} = 0.$ So by using integration by parts, repeatedly applying Agmon's inequality, Gagliardo-Nirenberg inequality, and Assumption \ref{prop of F} on $F$, we have the following inequality:
  \begin{align*}
        &\frac{1}{2}\frac{d}{dt}\|\mu\|^2 + \|\Delta\mu\|^2 \no\\& \leq \|\u\|_{\mathbb{L}^4_\sigma}\|\nabla\varphi\|_{\mathbb{L}^4}\|\Delta\mu\| + \|F'(\varphi)\|_{\mathrm{L}^\infty}\|\u\|\|\nabla\mu\| +\| F''(\varphi)\|_{\mathrm{L}^\infty}\|\Delta\mu\| \|\mu\|\no\\
        &\leq C\|\u\|^\frac{1}{2}\|\nabla\u\|^\frac{1}{2}\|\varphi\|_{\mathrm{H}^1}^{\frac{1}{4}}\|\Delta\varphi\|^{\frac{3}{4}}\|\Delta\mu\| + C\|\varphi\|^{p}_{\mathrm{L}^\infty}\|\u\|\|\nabla\mu\| + C\|\varphi\|^{p-1}_{\mathrm{L}^\infty}\|\Delta\mu\|\|\mu\|\no\\
        & \leq C \|\u\|^2\|\nabla\u\|^2 + C\|\varphi\|_{\mathrm{H}^1}\|\Delta\varphi\|^3 + \epsilon\|\Delta\mu\|^2 + C \|\varphi\|^{p}_{\mathrm{H}^1}\|\varphi\|^{p}_{\mathrm{H}^2}\|\u\|^2+ \|\nabla\mu\|^2 + \|\nabla\u\|^2 \no\\&\quad+C \|\varphi\|^{p-1}_{\mathrm{H}^1}\|\varphi\|^{p-1}_{\mathrm{H}^2}\|\mu\|^2 + \delta\|\Delta\mu\|^2.
    \end{align*}
    For $\epsilon+\delta> 0$ small enough, integrating the resulting inequality over the interval $[0, T]$ and applying Gronwall's inequality yield the required result.
\end{proof}
\begin{remark}
     From Lemma \ref{lem4.9}, using interpolation inequality, we can further conclude that $\mu \in \mathrm{L}^4(0, T; \mathrm{H}^1)$
\end{remark}

\subsection{Energy Equality}
Let us now discuss the energy equality satisfied by a weak solution $(\u, \varphi).$ It should be noted that, given the regularity of $\varphi $ and $\mu$ in Definition \ref{weak sol defn} and Remark \ref{rem4.7}, one can replace $\psi \text{ by } \mu$ in the equation \eqref{weak form phi}. But, replacing $\v \text{ by } \u$ in \eqref{weak form u} is not so straightforward. For this purpose we approximate $\u(\cdot)$ by the following the approximation given in \cite{robinson} (see \cite{Hajduk+Robinson_2017} for periodic domains). We define
\begin{align}\label{eigensp approx}
        \u^n(t) = \sum_{\lambda_j < n^2}e^{-\frac{\lambda_j}{n}}\langle \u(t), \boldsymbol{\omega}_j\rangle\boldsymbol{\omega}_j, \quad \text{for each } t \in [0, T],
\end{align}
where $\{\boldsymbol{\omega}_j\}_{j=1}^n$ are first $n$ eigenfunctions of the Stokes operator.  In \cite{robinson}, the authors studied that the approximation given by \eqref{eigensp approx} satisfies the following properties:
\begin{enumerate}
    \item $\u^n(t) \rightarrow \u(t)$ in $\H_0^1(\Omega)$  with $\|\u^n(t)\|_{\H^1} \leq C \|\u(t)\|_{\H^1}$ for all $t \in [0, T]$.
    \item  $\u^n(t) \rightarrow \u(t)$ in $\mathbb{L}^p_\sigma(\Omega)$  with $\|\u^n(t)\|_{\mathbb{L}^p_\sigma(\Omega)} \leq C \|\u(t)\|_{\mathbb{L}^p_\sigma(\Omega)}$ for any $1 < p < \infty$ and all $t \in [0, T]$.
    \item $\text{ div }\u^n(t) = 0$ in $\Omega$ and $\u^n(t) = 0$ on $\partial\Omega$ for all $t \in [0, T]$.
\end{enumerate}
Since $\boldsymbol{\omega}_j$ are eigenfunctions of Stokes operator $\A$, $\boldsymbol{\omega}_j \in \D(\A)$.  As $\D(\A) \subset \H^2 \subset \mathbb{L}^p_\sigma(\Omega)$ for any $p \in [1, \infty)$, we have $\boldsymbol{\omega}_j \in \mathbb{L}_{\sigma}^{r+1}(\Omega)$. So from (2), we have 
\begin{align}
    \|\u^n - \u\|_{\mathrm{L}^{r+1}(0, T; \mathbb{L}^{r+1}_{\sigma}(\Omega))} \rightarrow 0, \text{ as } n \rightarrow \infty.
\end{align}
Let $\eta(t)$ be a positive, even, smooth function with compact support in $(-1, 1)$ such that
  \begin{align*}
      \int_{-\infty}^{\infty}\eta(t) \, dt = 1.
  \end{align*}
  We define a family of mollifiers related to $\eta$, denoted by $\eta_h$, as
  \begin{align}\label{mollifier}
      \eta_h(s) := \frac{1}{h}\eta\Big(\frac{s}{h}\Big), \quad h > 0.
  \end{align}
  For any element $\v \in \mathrm{L}^p(0, T; X),$ where $X$ is a Banach space and $p \in [1, \infty)$, we define the mollification of $\v$ in time as
  \begin{align*}
      \v_h(s) := (\v \ast \eta_h)(s) =  \int_0^T \v(\tau)\eta_h(s-\tau) \, d\tau, \quad h \in (0, T).
  \end{align*}
   This mollification has some important properties (cf. \cite{galdibook}) such as for any $\v \in \mathrm{L}^p(0, T; X), $ $ \v_h \in C^k([0, T]; X)$ for all $k\geq 0$ and 
   \begin{align*}
       \lim_{h \rightarrow 0} \|\v_h - \v\|_{\mathrm{L}^p(0, T; X)} = 0.
   \end{align*}
   Moreover, if $(\v^n_h)_{n=1}^{\infty}$ converges to $\v$ in $\mathrm{L}^p(0, T; X),$ then
   \begin{align*}
       \lim_{n \rightarrow \infty} \|\v^n_h - \v_h\|_{\mathrm{L}^p(0, T; X)} = 0.
   \end{align*}
   Using these properties of the mollifiers, we prove  the following result:
\begin{theorem}\label{thm4.10}
Every weak solution $(\u, \varphi)$ of the system \eqref{equ P}  on a bounded domain satisfies the energy equality:
\begin{align}\label{energy equality}
    &\frac{1}{2}\bigg(\|\u(t_1)\|^2 + \|\nabla\varphi(t_1)\|^2 + 2\int_{\Omega}F(\varphi)dx\bigg) + \nu \int_0^{t_1} \|\nabla\u(s)\|^2 \, ds + \int_0^{t_1}\|\u(s)\|_{\L_{\sigma}^{r+1}}^{r+1} ds \no\\&\quad + \int_0^{t_1} \|\sqrt{m(\varphi)}\nabla\mu(s)\|^2 ds 
   \nonumber\\& = \frac{1}{2}\bigg(\|\u_0\|^2 + \|\nabla\varphi_0\|^2 + 2\int_{\Omega}F(\varphi_0)dx\bigg)+ \int_0^{t_1}\langle\mathbb{U}(s), \u(s)\rangle \, ds, 
\end{align}
for $(\u_0, \varphi_0) \in \H \times \mathrm{H}^1,$ $r\geq 3$ and for every $t_1 \in [0, T]$.
\end{theorem}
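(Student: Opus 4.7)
\medskip

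The plan is to derive the energy identity separately for the Cahn--Hilliard component and the momentum component, and then add them so that the coupling terms cancel. For the Cahn--Hilliard part, I would exploit the fact (established in Remark \ref{rem4.7} and Lemma \ref{lem4.9}) that $\mu\in\mathrm{L}^2(0,T;\mathrm{H}^1)$ and $\varphi'\in\mathrm{L}^2(0,T;(\mathrm{H}^1)')$, $\varphi\in\mathrm{L}^\infty(0,T;\mathrm{H}^1)\cap\mathrm{L}^2(0,T;\mathrm{H}^2)$. These regularities allow me to legitimately take $\psi=\mu$ as a test function in \eqref{weak form phi} by a density argument, and, writing $\mu=-\Delta\varphi+F'(\varphi)$, to apply the chain-rule identity $\langle\varphi'(s),\mathcal{A}\varphi(s)\rangle=\frac{1}{2}\frac{d}{ds}\|\nabla\varphi(s)\|^2$ together with $\langle\varphi'(s),F'(\varphi(s))\rangle=\frac{d}{ds}\int_\Omega F(\varphi(s))\,dx$ (the latter justified via Assumption \ref{prop of F} (3),(5) and the regularity of $\varphi$). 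The upshot is
\begin{align*}
\tfrac{1}{2}\|\nabla\varphi(t_1)\|^2+\int_\Omega F(\varphi(t_1))\,dx +\int_0^{t_1}\!(\u\cdot\nabla\varphi,\mu)\,ds+\int_0^{t_1}\!\|\sqrt{m(\varphi)}\nabla\mu\|^2\,ds
=\tfrac{1}{2}\|\nabla\varphi_0\|^2+\int_\Omega F(\varphi_0)\,dx.
\end{align*}

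For the momentum part, I would substitute the test function $\v=(\u^n)_h$ into \eqref{weak form u}, where $\u^n$ is the spectral truncation \eqref{eigensp approx} (which is admissible since the eigenfunctions lie in $\D(\A)\subset\H^1_0\cap\mathbb{L}^p_\sigma$ for every $p$) and $(\,\cdot\,)_h$ is the time mollification \eqref{mollifier}. Then I would pass to the limit $h\to0$ first, using the symmetry of the even mollifier: the term $-\int_0^{t_1}(\u(s),\partial_s(\u^n)_h(s))\,ds$ equals $-\int_0^{t_1}(\u_h(s),\partial_s \u^n(s))\,ds$ up to boundary contributions at $s=0$ and $s=t_1$, and the standard mollification identity yields in the limit $-\int_0^{t_1}(\u,\partial_s\u^n)\,ds=\tfrac{1}{2}(\|\u^n(t_1)\|^2-\|\u^n(0)\|^2)$ via Galerkin-type reasoning (since $\u^n(s)$ lies in a fixed finite-dimensional subspace where all norms are equivalent and $(\u(s),\u^n(s))=\|\u^n(s)\|^2$ by construction).

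Then I would send $n\to\infty$. The linear terms are straightforward from the norm bounds $\|\u^n(t)\|_{\H^1}\le C\|\u(t)\|_{\H^1}$ and $\|\u^n(t)\|_{\mathbb{L}^p_\sigma}\le C\|\u(t)\|_{\mathbb{L}^p_\sigma}$ together with $\u^n\to\u$ in the same spaces. The damping term is controlled by the strong convergence $\u^n\to\u$ in $\mathrm{L}^{r+1}(0,T;\mathbb{L}^{r+1}_\sigma)$, giving $\int_0^{t_1}\langle|\u|^{r-1}\u,\u^n\rangle\,ds\to\int_0^{t_1}\|\u\|^{r+1}_{\mathbb{L}^{r+1}_\sigma}\,ds$. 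The coupling term $(\mu\nabla\varphi,\u^n)\to(\mu\nabla\varphi,\u)$ follows from $\nabla\varphi\in\mathrm{L}^\infty(0,T;\mathbb{L}^2)\cap\mathrm{L}^2(0,T;\mathbb{L}^6)$, $\mu\in\mathrm{L}^2(0,T;\mathrm{H}^1)$ and the $\mathbb{L}^p$-convergence of $\u^n$. Adding the two identities, I observe that $(\u\cdot\nabla\varphi,\mu)=(\mu\nabla\varphi,\u)$ are literally the same integral and appear with opposite signs after transposition, so they cancel, leaving \eqref{energy equality}.

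The main obstacle is the convective term $\int_0^{t_1}\langle(\u\cdot\nabla)\u,\u^n\rangle\,ds$, which must be shown to vanish in the limit so that the identity $b(\u,\u,\u)=0$ is reproduced. This is precisely where the assumption $r\ge 3$ is used: by interpolating $\u\in\mathrm{L}^\infty(0,T;\H)\cap\mathrm{L}^{r+1}(0,T;\mathbb{L}^{r+1}_\sigma)$ with $\u\in\mathrm{L}^2(0,T;\V_{\mathrm{div}})$, I get $\u\in\mathrm{L}^4(0,T;\mathbb{L}^4_\sigma)$ (the Shinbrot/Lions threshold for 3D energy equality). Writing $\langle(\u\cdot\nabla)\u,\u^n\rangle=-\langle(\u\cdot\nabla)\u^n,\u\rangle=-b(\u,\u^n-\u,\u)$ and estimating by $\|\u\|_{\mathbb{L}^4_\sigma}^2\|\nabla(\u^n-\u)\|$ combined with the boundedness $\|\nabla\u^n\|\le C\|\nabla\u\|$ and dominated convergence (noting $\u^n\to\u$ in $\mathrm{L}^2(0,T;\V_{\mathrm{div}})$ by the spectral construction), the term vanishes as $n\to\infty$. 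This is the delicate step that fails when $r<3$, explaining the threshold in the statement.
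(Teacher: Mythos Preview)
Your overall architecture is the same as the paper's: test the momentum equation with a spectrally truncated, time-mollified version of $\u$, test the $\varphi$-equation with $\mu$, and add. The handling of the convective and absorption terms, and the identification of $r\ge 3$ as the threshold giving $\u\in\mathrm{L}^4(0,T;\mathbb{L}^4_\sigma)$, match the paper exactly. Two points deserve comment.

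First, you reverse the order of limits (you take $h\to 0$ and then $n\to\infty$; the paper takes $n\to\infty$ and then $h\to 0$). Both orders can be made to work, but your treatment of the time-derivative term contains a concrete error: the claim ``$(\u(s),\u^n(s))=\|\u^n(s)\|^2$ by construction'' is false for the truncation \eqref{eigensp approx}. Because of the damping weights $e^{-\lambda_j/n}$ one has
\[
(\u,\u^n)=\sum_{\lambda_j<n^2} e^{-\lambda_j/n}\,(\u,\boldsymbol{\omega}_j)^2,
\qquad
\|\u^n\|^2=\sum_{\lambda_j<n^2} e^{-2\lambda_j/n}\,(\u,\boldsymbol{\omega}_j)^2,
\]
which are different. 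The fix is easy: the operator $T_n\u:=\u^n$ is self-adjoint on $\H$, so $(\u,\partial_s\u^n)=(\partial_s\u,\u^n)$ and hence $(\u,\partial_s\u^n)=\tfrac12\frac{d}{ds}(\u,\u^n)$; then $(\u(t_1),\u^n(t_1))\to\|\u(t_1)\|^2$ as $n\to\infty$. Alternatively, the paper's trick works verbatim at your stage too: the kernel $(s,\tau)\mapsto (\u(s),\u^n(\tau))$ is symmetric, so the oddness of $\eta_h'$ kills $\int_0^{t_1}(\u,(\u^n)'_h)\,ds$ directly, and only the boundary contributions $(\u(t_1),(\u^n)_h(t_1))$, $(\u(0),(\u^n)_h(0))$ survive, each tending to $\tfrac12\|\u(\cdot)\|^2$.

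Second, for the $\varphi$-equation you test directly with $\mu$ and invoke chain rules, whereas the paper also mollifies $\mu$ in time and passes to the limit. Your shortcut is legitimate given $\varphi'\in\mathrm{L}^2(0,T;(\mathrm{H}^1)')$, $\varphi\in\mathrm{L}^2(0,T;\mathrm{H}^2)$ and $F'(\varphi)\in\mathrm{L}^2(0,T;\mathrm{H}^1)$, but you should state explicitly which abstract chain-rule lemma justifies $\langle\varphi',-\Delta\varphi\rangle=\tfrac12\frac{d}{dt}\|\nabla\varphi\|^2$ at this regularity; the paper's mollification argument is precisely what makes this step rigorous without appealing to such a lemma.
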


\begin{proof}
As $\varphi$ and $\mu$ have enough regularity in the spatial variable, we only mollify $\mu$ in the time variable. In order to prove the energy equality \eqref{energy equality}, we follow the works \cite{robinson, Hajduk+Robinson_2017, galdibook,Kinra+Mohan_2024}.
  For some $t_1 > 0,$ we set 
  \begin{align}
  \mu_h(t) &= \int_0^T\mu(\tau)\chi_{[0, t_1]}(\tau)\eta_h(t-\tau) \, d\tau,\\
      \u^n_h(t) &= (\u^n \ast \eta_h)(t) = \int_0^{T}\u^n(\tau)\chi_{[0, t_1]}(\tau)\eta_h(t-\tau) \, d\tau,\label{mol u}
  \end{align}
  with the parameter $h>0$ satisfying $0< h < T-t_1 \text{ and } h<t_1$, where $\u^n$ is the eigenspace approximation of $\u$ defined in \eqref{eigensp approx} and $\eta_h$ is an even mollifier defined in \eqref{mollifier}. Since $\mu_h \in C^\infty(\Omega \times [0, T]), \, \u^n_h \in \mathbb{D}_{\sigma}(\Omega \times (0, T)),$ we can choose $ \v = \u^n_h\text{ and }\psi = \mu_h $ in  \eqref{weak form u}, \eqref{weak form phi}, respectively to get
  \begin{align}
      &-\int_0^{t_1}\langle \u(s), (\u^n_h)'(s)\rangle \, ds  + \nu \int_0^{t_1}\langle \nabla\u(s), \nabla\u^n_h(s)\rangle \, ds + \int_0^{t_1}\langle (\u(s)\cdot\nabla)\u(s), \u^n_h(s) \rangle \, ds \no\\
     &\quad+ \beta \int_0^{t_1}\langle |\u(s)|^{r-1}\u(s), \u^n_h(s) \rangle \, ds \no\\&= \int_0^{t_1}\langle \mu(s) \nabla \varphi(s), \u^n_h(s) \rangle \, ds,  -\langle \u(t), \u^n_h(t)\rangle + \langle\u(0), \u^n_h(0)\rangle,\label{test u}\\
     &\int_0^{t_1} \langle \varphi'(s), \mu_h(s)\rangle \, ds + \int_0^{t_1}\langle\boldsymbol{u}(s) \cdot \nabla \varphi(s), \mu_h(s)\rangle \, ds + \int_0^{t_1}\langle m(\varphi)\nabla \mu(s), \nabla\mu_h(s) \rangle \, ds  
     = 0.\label{test phi}
  \end{align}
We want to pass to limit as $n \rightarrow \infty$ in \eqref{test u}. 
An application of H\"older's inequality yields 
\begin{align}
  &\Big| \int_0^{t_1}\langle \u(s), (\u^n_h)'(s) - \u'_h(s)\rangle \, ds \Big|\leq \|\u\|_{\mathrm{L}^{\infty}(0,T;\H)}\|(\u^n_h)'- \u'_h\|_{\mathrm{L}^1(0,T;\H)}\to 0\ \text{ as }\ n\to\infty.
\end{align}
Similarly, we have 
\begin{align}\label{grad}
   \Big| \int_0^{t_1}\langle \nabla\u(s), \nabla\u^n_h(s)\rangle \, ds - \int_0^{t_1}\langle \nabla\u(s), \nabla\u_h(s)\rangle \, ds\Big|& \leq \|\u\|_{\mathrm{L}^2(0, T; \V_{\mathrm{div}})}\|\nabla(\u^n_h - \u_h)\|_{\mathrm{L}^2(0, T; \H)}\nonumber\\&\to 0\ \text{ as }\ n\to\infty. 
\end{align}
   Further, we deduce 
  \begin{align}\label{mu}
      \Big|&\int_0^{t_1}\langle \mu(s) \nabla \varphi(s), \u^n_h(s) \rangle \, ds - \int_0^{t_1}\langle \mu(s) \nabla \varphi(s), \u_h(s) \rangle \, ds \Big| \no \\  
     & = \Big|\int_0^{t_1}\langle \varphi(s)\nabla\mu(s), \u^n_h(s) \rangle \, ds - \int_0^{t_1}\langle \varphi(s)\nabla\mu(s), \u_h(s) \rangle \, ds \Big|\no\\
     & \leq C \int_{0}^{t_1}\|\nabla\mu(s)\| \|\varphi(s)\|_{\mathrm{L}^3}\|\u^n_h(s) - \u_h(s)\|_{\mathbb{L}^3_{\sigma}} \, ds \no\\
     & \leq C \|\nabla\mu\|_{\mathrm{L}^2(0, T; \mathbb{L}^2)}\|\nabla\varphi\|^{\frac{1}{2}}_{\mathrm{L}^\infty(0, T; \mathbb{L}^2)}\|\varphi\|^{\frac{1}{2}}_{\mathrm{L}^\infty(0, T; \mathrm{L}^2)}\|\u^n_h-\u_h\|_{\mathrm{L}^2(0, T; \V_{\mathrm{div}})} \no\\&\rightarrow 0\ \text{ as }\ n\to\infty. 
  \end{align}
Now, we pass to the limit in the convective term as follows:
  \begin{align}\label{trilinear}
      \Big|&\int_0^{t_1}\langle (\u(s)\cdot\nabla)\u(s), \u^n_h(s) \rangle \, ds - \int_0^{t_1}\langle (\u(s)\cdot\nabla)\u(s), \u_h(s) \rangle \, ds \Big| \no\\ 
      &\leq \|(\u\cdot\nabla)\u\|_{\mathrm{L}^2(0, T; \V'_{\mathrm{div}})}\|\u^n_h - \u_h\|_{{\mathrm{L}^2(0, T; \V_{\mathrm{div}})}} \rightarrow 0 \ \text{ as }\ n\to\infty,
  \end{align}
since $\|(\u\cdot\nabla)\u\|_{\mathrm{L}^2(0, T; \V'_{\mathrm{div}})}\leq \|\u\|_{\mathrm{L}^4(0,T;\mathbb{L}^4_{\sigma})}^2<\infty,$ for all $r\geq 3$.   Similarly, we estimate the damping term as 
  \begin{align}\label{absobtion}
      \Big|&\int_0^{t_1}\langle |\u(s)|^{r-1}\u(s), \u^n_h(s) \rangle \, ds - \int_0^{t_1}\langle |\u(s)|^{r-1}\u(s), \u_h(s) \rangle \, ds \Big| \no\\ &\leq \||\u|^{r-1}\u\|_{\mathrm{L}^{\frac{r+1}{r}}(0, T; \mathbb{L}_{\sigma}^{\frac{r+1}{r}})}\|\u^n_h - \u_h\|_{\mathrm{L}^{r+1}(0, T; \mathbb{L}^{r+1}_{\sigma})}\no\\
      & \leq \|\u\|_{\mathrm{L}^{r+1}(0, T; \mathbb{L}^{r+1}_{\sigma})}\|\u^n_h - \u_h\|_{\mathrm{L}^{r+1}(0, T; \mathbb{L}^{r+1}_{\sigma})} \rightarrow 0 \ \text{ as } \ n \rightarrow \infty.
  \end{align}
   So after passing to the limit as $n \rightarrow \infty$ in \eqref{test u}, we arrive at the following identity:
   \begin{align}\label{n tends infty}
     & -\int_0^{t_1}\langle \u(s), \u'_h(s)\rangle \, ds  + \nu \int_0^{t_1}(\nabla\u(s), \nabla\u_h(s)) \, ds + \int_0^{t_1}\langle (\u(s)\cdot\nabla)\u(s), \u_h(s) \rangle \, ds \no\\
     &\quad+ \beta \int_0^{t_1}\langle |\u(s)|^{r-1}\u(s), \u_h(s) \rangle \, ds \nonumber\\&= \int_0^{t_1}(\mu(s) \nabla \varphi(s), \u_h(s)) \, ds + \int_0^{t_1}\langle \mathbb{U}(s), \u_h(s)\rangle ds  - ( \u(t_1), \u_h(t_1)) + (\u(0), \u_h(0)).
   \end{align}
   Now we want to pass to the limit as $h \rightarrow 0$ in \eqref{n tends infty}. Note that since $\eta_h$ is an even function in $(-h, h)$, we have $\eta'_h(t) = -\eta'_h(-t)$. Hence, 
   \begin{align}\label{time mol}
       \int_0^{t_1}( \u(s), \u'_h(s) ) \, ds &= \int_0^{t_1} \Big( \u(s), \int_0^{t_1}\u(\tau)\eta'_h(s-\tau) \, d\tau \Big) \, ds \no\\
       & = \int_0^{t_1} \int_0^{t_1} \eta'_h(s-\tau)( \u(s), \u(\tau)) \, \d\tau \, ds \no\\
       & = - \int_0^{t_1} \int_0^{t_1} \eta'_h(\tau - s)(\u(s), \u(\tau)) \, \, d\tau \, ds\no\\
       & =  - \int_0^{t_1} \int_0^{t_1} \eta'_h(\tau - s)(\u(s), \u(\tau) ) \, ds \, \, d\tau = 0.
   \end{align}
   Now letting $h \rightarrow 0$ and arguing similarly as in \eqref{grad}, \eqref{mu}, \eqref{trilinear} and \eqref{absobtion}, we get
   \begin{align*}
       &\lim_{h \rightarrow 0}\int_0^{t_1} (\nabla\u(s), \nabla\u_h(s)) \, ds =\int_0^{t_1} (\nabla\u(s), \nabla\u(s)) \, ds, \\
       &\lim_{h \rightarrow 0}\int_0^{t_1} (\mu(s) \nabla \varphi(s), \u_h(s)) \, ds = \int_0^{t_1} (\mu(s) \nabla \varphi(s), \u(s)) \,ds,\\
       &\lim_{h \rightarrow 0}\int_0^{t_1}\langle (\u(s)\cdot\nabla)\u(s), \u_h(s) \rangle \, ds = \int_0^{t_1}\langle (\u(s)\cdot\nabla)\u(s), \u(s) \rangle \, ds,\\
       &\lim_{h \rightarrow 0}\int_0^{t_1} \langle|\u(s)|^{r-1}\u(s), \u_h(s) \rangle \, ds = \int_0^{t_1}\langle |\u(s)|^{r-1}\u(s), \u(s) \rangle \, ds,\\
       &\lim_{h \rightarrow 0}\int_0^{t_1}\langle\mathbb{U}(s), \u_h(s)\rangle ds = \int_0^{t_1}\langle\mathbb{U}(s), \u(s)\rangle ds,
   \end{align*}
   which gives 
   \begin{align}\label{h lim}
     \nu \int_0^{t_1} \|\nabla\u(s)\|^2 \, ds  + \beta \int_0^{t_1}\|\u(s)\|^{r+1}_{\mathbb{L}_{\sigma}^{r+1}(\Omega)} ds= & \int_0^{t_1}\langle \mu(s) \nabla \varphi(s), \u(s) \rangle \,ds + \int_0^{t_1}\langle\mathbb{U}(s), \u(s)\rangle \, ds  \no\\
     & - \lim_{h \rightarrow 0}(\u(t_1), \u_h(t_1))+ \lim_{h \rightarrow 0} (\u(0), \u_h(0) ),  
   \end{align}
 since $\u\in C_w([0,T];\H).$  We evaluate the last two terms of \eqref{h lim} as follows:
   \begin{align*}
        (\u(t_1), \u_h(t_1)) =& \Big( \u(t_1), \int_0^{t_1}\u(s)\eta_h(t_1-s) \, ds \Big)\\
       =& \int_0^{t_1} \eta_h(s)( \u(t_1), \u(t_1-s)) \, ds \\
       =&\int_0^h \eta_h(s) ( \u(t_1), \u(t_1-s) ) \, ds\\
       = &\int_0^h\eta_h(s)\|\u(t_1)\|^2 \, ds + \int_0^h \eta_h(s) ( \u(t_1), \u(t_1-s)-\u(t_1)) \, ds .
   \end{align*} 
   Using the $\H$-weak continuity of $\u$ and the fact that $\int_0^h \eta_h(s) \, ds = \frac{1}{2}$, we obtain
\begin{align*}
    \lim_{h \rightarrow 0}( \u(t_1), \u_h(t_1) ) = \frac{1}{2}\|\u(t_1)\|^2.
\end{align*}
Similarly, we have 
\begin{align*}
     \lim\limits_{h \rightarrow 0} (\u(0), \u_h(0) ) = \frac{1}{2}\|\u(0)\|^2.
\end{align*}
Finally, from \eqref{h lim}, we derive
\begin{align}\label{ee u}
    &\frac{1}{2}\|\u(t_1)\|^2 + \nu \int_0^{t_1} \|\nabla\u(s)\|^2 \, ds  + \beta \int_0^{t_1}\|\u(s)\|^{r+1}_{\mathbb{L}_{\sigma}^{r+1}(\Omega)} \nonumber\\&=  \; \frac{1}{2}\|\u(0)\|^2 + \int_0^{t_1}\langle \mu(s) \nabla \varphi(s), \u(s) \rangle \,ds  + \int_0^{t_1}\langle\mathbb{U}(s), \u(s)\rangle ds,
\end{align}
for all $t_1 \in [0, T].$

Let us now pass to limit as $h \rightarrow 0$ in \eqref{test phi}. Consider 
\begin{align}
    \int_0^{t_1} \langle \varphi'(s), \mu_h(s)\rangle \, ds &= -\int_0^{t_1}\langle \varphi'(s), (\Delta\varphi)_h(s)\rangle \, ds+ \int_0^{t_1}\langle \varphi'(s), (F'(\varphi))_h(s)\rangle \, ds\no\\
    &:= I_1 + I_2.\no
\end{align}
We can write $I_1$ as
\begin{align}
   I_1 &=  \int_0^{t_1}\langle\varphi(s),(\Delta\varphi)'_h(s)\rangle ds  -\langle\varphi(t_1),(\Delta\varphi)_h(t_1)\rangle + \langle\varphi(0),(\Delta\varphi)_h(0)\rangle.\no
\end{align}
Proceeding similarly as in \eqref{time mol}, we obtain 
\begin{align}
    \int_0^{t_1}\langle\varphi(s),(\Delta\varphi)'_h(s)\rangle ds  = 0.
\end{align}
We can pass to the limit in the second term of $I_1$ as follows:
\begin{align}
  \lim_{h\rightarrow 0} -\langle\varphi(t_1),(\Delta\varphi)_h(t_1)\rangle &= \lim_{h\rightarrow 0}-\left\langle\varphi(t_1), \int_0^{t_1}\eta_h(t_1-s)\Delta\varphi(t_1) \, ds \right\rangle\no\\
     & = \lim_{h\rightarrow 0}- \int_0^{h}\eta_h(t_1-s)\langle\varphi(t_1), \Delta\varphi(t_1)\rangle \, ds = \frac{1}{2}\|\nabla\varphi(t_1)\|^2,
\end{align}
where we have used the fact that $\eta_h$ is compactly supported in $(-h, h)$, $h<t_1$ and $\int_0^h\eta_h(s) \, ds = \frac{1}{2}$. Similarly, we can pass to the limit in the last term of $I_1$. Therefore, we deduce 
\begin{align}\label{lim I1}
    \lim_{h \rightarrow 0}I_1= \frac{1}{2}\|\nabla\varphi(t_1)\|^2- \frac{1}{2}\|\nabla\varphi(0)\|^2.
\end{align}
Next, we observe that 
\begin{align}\label{I_2}
     &\Big|\int_0^{t_1}\langle\varphi'(s), (F'(\varphi))_h(s) - (F'(\varphi(s))\rangle \, ds\Big|\no\\
    & \leq \|\varphi'\|_{\mathrm{L}^2(0, T; (\mathrm{H}^1)')}\|(F'(\varphi))_h - (F'(\varphi))\|_{\mathrm{L}^2(0, T; \mathrm{H}^1)}.
\end{align}
It can be seen that $F'(\varphi) \in \mathrm{L}^2(0, T; \mathrm{H}^1)$, since  by using  Gagliardo-Nirenberg inequality, we obtain 
\begin{align}\label{F'calculation}
   \|F'(\varphi)\|_{\mathrm{L}^2(0, T;\mathrm{H}^1)}^2 &= \|F'(\varphi)\|_{\mathrm{L}^2(0, T; \mathrm{L}^2)}^2 + \|\nabla F'(\varphi)\|_{\mathrm{L}^2(0, T; \mathbb{L}^2)}^2\no\\
   &\leq  2 C_1^2 \int_0^T\|\varphi(t)\|^{2p}_{\mathrm{L}^{2p}} dt +2C_2^2|\Omega|T\nonumber\\&\quad+2 C_3^2  \|\nabla\varphi\|_{\mathrm{L}^\infty(0, T; \mathbb{L}^2)}\left(|\Omega|T+\int_0^T\|\varphi(t)\|_{\mathrm{L}^{\infty}}^{2(p-1)} \, dt\right) := S.
\end{align}
Note that for $1\leq p\leq 3$, $S$ is finite. For $3<p\leq 4,$ we can estimate $S$ as follows:
\begin{align}
 S \leq C\left(1+\int_0^T\|\varphi(t)\|_{\mathrm{H}^2}^{p-3}\|\varphi(t)\|^{p+3}_{\mathrm{H}^1} \, dt+\int_0^T\|\varphi(t)\|^{p-1}_{\mathrm{H}^1}\|\varphi(t)\|^{p-1}_{\mathrm{H}^2} \, dt\right) < \infty.
\end{align} 
So passing to the limit as $h \rightarrow 0$ in \eqref{I_2}, we have 
\begin{align}\label{lim I_2}
    \lim_{h \rightarrow 0}I_2 = \int_0^{t_1}\langle\varphi'(s), F'(\varphi(s))\rangle \, ds.
\end{align}
Now as explained in \cite[proof of Corollary 2]{colli_weak},  one can deduce that 
\begin{align*}
    \langle\varphi', F'(\varphi)\rangle = \frac{d}{dt}\int_\Omega F(\varphi)\, dx.
\end{align*}
Therefore, from \eqref{lim I_2}, we get
\begin{align}\label{limI2}
     \lim_{h \rightarrow 0}I_2 = \int_\Omega F(\varphi(t_1)) \, d x- \int_\Omega F(\varphi(0))\, d x.
\end{align}
We can easily calculate the limit of the second and third terms of \eqref{test phi} as below:
\begin{align}\label{2ndtestphi}
    &\bigg|\int_0^{t_1}\langle\boldsymbol{u}(s) \cdot \nabla \varphi(s), \mu_h(s)\rangle \, ds - \int_0^{t_1}\langle\boldsymbol{u}(s) \cdot \nabla \varphi(s), \mu(s)\rangle \, ds\bigg|\no\\ &= \bigg|\int_0^{t_1}\langle\boldsymbol{u}(s) \cdot \nabla \varphi(s), \mu_h(s)-\mu(s)\rangle \, ds\bigg| \leq \int_0^{t_1}\|\u(s)\|_{\mathbb{L}^3_{\sigma}}\|\nabla\varphi(s)\|_{\mathrm{L}^2}\|\mu_h(s)-\mu(s)\|_{\mathrm{L}^6} ds\no\\
    &\leq \int_0^{t_1}\|\u(s)\|_{\H}^{\frac{1}{2}}\|\u(s)\|^{\frac{1}{2}}_{\V_{\mathrm{div}}}\|\nabla\varphi(s)\|_{\mathbb{L}^2}\|\mu_h(s)-\mu(s)\|_{\mathrm{H}^1} ds \no\\
    &\leq \sup_{s\in[0, T]}\|\nabla\varphi(s)\|_{\mathbb{L}^2}\Big(\int_0^{t_1}\|\u(s)\|_{\H}\|\u(s)\|_{\V_{\mathrm{div}}} ds\Big)^{\frac{1}{2}}\Big(\int_0^{t_1}\|\mu_h(s)-\mu(s)\|^2_{\mathrm{H}^1} ds\Big)^{\frac{1}{2}} \nonumber\\&\rightarrow 0 \ \text{ as }\  h \rightarrow 0,
\end{align}
and 
\begin{align}\label{3rdtesrphi}
    &\bigg|\int_0^{t_1}\langle m(\varphi(s))\nabla \mu(s), \nabla\mu_h(s)-\mu(s) \rangle \, ds\bigg| \nonumber\\&\leq m_2 \Big(\int_0^{t_1}\|\nabla\mu(s)\|^2 ds\Big)^{\frac{1}{2}}\Big(\int_0^{t_1}\|\nabla\mu_h(s)-\nabla\mu(s)\|^2 ds \Big)^{\frac{1}{2}}\no\\
    & \rightarrow 0 \ \text{ as }\  h \rightarrow 0.
\end{align}
Therefore passing to the limit in \eqref{test phi} with the help of \eqref{lim I1}, \eqref{limI2}, \eqref{2ndtestphi}, and \eqref{3rdtesrphi} yield
\begin{align}\label{limtestphi}
   & \frac{1}{2}\|\nabla\varphi(t_1)\|^2 + \int_\Omega F(\varphi(t_1))\, dx + \int_0^{t_1}\langle\u(s)\cdot\nabla\varphi(s), \mu(s)\rangle ds + \int_0^{t_1}\|\sqrt{m(\varphi(s))}\nabla\mu(s)\|^2 ds \no\\&= \frac{1}{2}\|\nabla\varphi(0)\|^2 + \int_\Omega F(\varphi(0))\, dx.
    \end{align}
 Now adding \eqref{ee u} with \eqref{limtestphi}, we finally deduce  the energy equality for any weak solution $(\u, \varphi).$
\end{proof}
\begin{remark}
   From the energy equality \eqref{energy equality}, it is immediate that any weak solution of \eqref{equ P} ($r\geq 3$) is continuous in time in $\H\times\mathrm{H}^1$, that is, $(\u,\varphi)\in C([0,T];\H\times\mathrm{H}^1)$. 
\end{remark}
\begin{remark}
    It should be noted that this type of energy equality or continuity of weak solutions in $H\times\mathrm{H}^1$  is not proved for the Cahn-Hilliard-Navier-Stokes system in three dimensions till now. This has been studied under Serrin-type condition $\u\in \mathrm{L}^r(0,T;\mathbb{L}^s_{\sigma})$ for $\frac{2}{r}+\frac{3}{s}=1$. But adding an absorption term in the Cahn-Hilliard-Navier-Stokes system provides us the continuity  result for $\u\in \mathrm{L}^4(0,T;\mathbb{L}^4_{\sigma})$.
\end{remark}

\section{Uniqueness}\label{sec4}
In this section, we prove the uniqueness of weak solutions and also the continuous dependence on the data under the assumption that mobility is a constant for $r \geq 3$ and $(\u_0, \varphi_0) \in \H \times\mathrm{H}^2$.  For  $r=3,$ the same  results hold true when  fluid's viscosity ($\nu$) and medium's porosity ($\beta$) are sufficiently large ($\beta\nu\geq 1$). However to the end of this section, we employ a new technique using the energy equality satisfied by  weak solutions to prove the uniqueness for the case $r=3$ without  the extra condition on $\nu$ and $ \beta$. Thus we have the uniqueness result for all $r \geq 3$ and $\nu,\beta>0$.
\begin{theorem}\label{unique}
 Let $(\u, \varphi)$ be a weak solution of \eqref{equ P} with initial condition $(\u_0, \varphi_0) \in \H \times\mathrm{H}^2$ and mobility $m=1.$ Then the solution is unique for every $r>3$ and for $r=3,$ the solution is unique if $\beta\nu \geq 1.$   
\end{theorem}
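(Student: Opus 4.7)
The plan is to run the standard comparison argument on two Leray--Hopf weak solutions $(\u_1,\varphi_1)$ and $(\u_2,\varphi_2)$ emanating from the same data, with differences $\tilde\u=\u_1-\u_2$, $\tilde\varphi=\varphi_1-\varphi_2$, $\tilde\mu=\mu_1-\mu_2$. Conservation of mass \eqref{av} forces $\langle\tilde\varphi(t)\rangle=0$, so $\mathcal{B}^{-1}\tilde\varphi$ is well-defined. First I would subtract the momentum equations and test with $\tilde\u$; this is legitimate because the energy equality of Theorem \ref{thm4.10} holds for $r\geq 3$, so $\tilde\u$ is an admissible test function in a standard energy identity. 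The resulting relation produces $\nu\|\nabla\tilde\u\|^2$, the monotone damping contribution
\[
\beta\,(|\u_1|^{r-1}\u_1-|\u_2|^{r-1}\u_2,\tilde\u)\geq \tfrac{\beta}{2}\int_\Omega\bigl(|\u_1|^{r-1}+|\u_2|^{r-1}\bigr)|\tilde\u|^2\,dx,
\]
the trilinear residue $b(\tilde\u,\u_1,\tilde\u)=-b(\tilde\u,\tilde\u,\u_1)$, and the capillary coupling $(\tilde\mu\,\nabla\varphi_1+\mu_2\,\nabla\tilde\varphi,\tilde\u)$.

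In parallel I would test the difference of the Cahn--Hilliard equations with $\mathcal{B}^{-1}\tilde\varphi$, obtaining $\tfrac{1}{2}\tfrac{d}{dt}\|\tilde\varphi\|_\ast^2$ on the left by \eqref{bes1}, together with the identity
\[
\langle\tilde\mu,\tilde\varphi\rangle=\|\nabla\tilde\varphi\|^2+\langle F'(\varphi_1)-F'(\varphi_2),\tilde\varphi\rangle,
\]
where Assumption \ref{prop of F}(2) gives $\langle F'(\varphi_1)-F'(\varphi_2),\tilde\varphi\rangle\geq -C_0\|\tilde\varphi\|^2$. The advective residues $\langle\tilde\u\cdot\nabla\varphi_1,\mathcal{B}^{-1}\tilde\varphi\rangle$ and $\langle\u_2\cdot\nabla\tilde\varphi,\mathcal{B}^{-1}\tilde\varphi\rangle$ are estimated by pulling $\nabla$ onto $\mathcal{B}^{-1}\tilde\varphi$ via integration by parts and using the higher regularity of $\varphi_i$ and $\mu_i$ granted by Lemmas \ref{higher regularity phi} and \ref{lem4.9}, which require the hypothesis $\varphi_0\in\mathrm{H}^2$.

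The main obstacle is absorbing the trilinear term into the dissipation. The bound
\[
|b(\tilde\u,\tilde\u,\u_1)|\leq \bigl\||\u_1||\tilde\u|\bigr\|\,\|\nabla\tilde\u\|\leq \tfrac{\nu}{2}\|\nabla\tilde\u\|^2+\tfrac{1}{2\nu}\int_\Omega|\u_1|^2|\tilde\u|^2\,dx,
\]
must be controlled by the damping reservoir. When $r>3$, I would interpolate using H\"older exponents $\tfrac{r-1}{2}$ and $\tfrac{r-1}{r-3}$ to obtain, for any $\eps>0$,
\[
\int_\Omega|\u_1|^2|\tilde\u|^2\,dx\leq \eps\int_\Omega|\u_1|^{r-1}|\tilde\u|^2\,dx+C_\eps\|\tilde\u\|^2,
\]
so the super-critical damping swallows the offending term while only a harmless $\|\tilde\u\|^2$ survives. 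In the critical case $r=3$ no such interpolation slack is available: the bound $\tfrac{1}{2\nu}\int_\Omega|\u_1|^2|\tilde\u|^2\,dx$ can only be placed directly against $\tfrac{\beta}{2}\int_\Omega|\u_1|^2|\tilde\u|^2\,dx$, and this is exactly where the assumption $\beta\nu\geq 1$ enters.

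Adding the $\tilde\u$ and $\tilde\varphi$ identities and estimating the coupling terms $(\tilde\mu\,\nabla\varphi_1,\tilde\u)$ and $(\mu_2\,\nabla\tilde\varphi,\tilde\u)$ via H\"older, Sobolev, and $\|\tilde\mu\|\leq \|\Delta\tilde\varphi\|+C\|\tilde\varphi\|_{\mathrm{H}^1}$ (so that part is absorbed into $\|\nabla\tilde\varphi\|^2$ and $\nu\|\nabla\tilde\u\|^2$), I would arrive at a differential inequality of the form
\[
\tfrac{d}{dt}\bigl(\|\tilde\u\|^2+\|\tilde\varphi\|_\ast^2\bigr)\leq \eta(t)\bigl(\|\tilde\u\|^2+\|\tilde\varphi\|_\ast^2\bigr),
\]
with $\eta\in \mathrm{L}^1(0,T)$, the integrability of $\eta$ being guaranteed by $\varphi_i\in\mathrm{L}^4(0,T;\mathrm{H}^2)$ and $\mu_i\in\mathrm{L}^\infty(0,T;\mathrm{L}^2)\cap\mathrm{L}^2(0,T;\mathrm{H}^2)$. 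A Gronwall argument then yields $\tilde\u\equiv 0$ and $\tilde\varphi\equiv 0$, proving uniqueness in both regimes.
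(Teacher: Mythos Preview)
Your overall strategy---subtract, test the momentum difference with $\tilde\u$, test the Cahn--Hilliard difference at a negative Sobolev level, and close by Gronwall---is the same as the paper's, and your treatment of the trilinear/damping balance is correct and essentially identical to what the paper does (the interpolation for $r>3$ and the direct comparison forcing $\beta\nu\geq 1$ for $r=3$).

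The gap is in the Cahn--Hilliard half. Testing the difference equation only with $\mathcal{B}^{-1}\tilde\varphi$ yields, through $\langle\tilde\mu,\tilde\varphi\rangle$, the dissipation $\|\nabla\tilde\varphi\|^2$ but \emph{not} $\|\Delta\tilde\varphi\|^2$. Yet the capillary coupling $(\tilde\mu\,\nabla\varphi_1,\tilde\u)$ contains the piece $(-\Delta\tilde\varphi\,\nabla\varphi_1,\tilde\u)$, and any H\"older/Sobolev estimate of this produces a factor $\|\Delta\tilde\varphi\|$ (or, after integration by parts, $\|\nabla\tilde\varphi\|_{\mathbb{L}^3}$, which by Gagliardo--Nirenberg still costs half a derivative more than $\|\nabla\tilde\varphi\|$). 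Your parenthetical ``so that part is absorbed into $\|\nabla\tilde\varphi\|^2$'' is therefore not justified: $\|\Delta\tilde\varphi\|$ simply cannot be controlled by $\|\nabla\tilde\varphi\|$ alone. The paper closes this by testing the Cahn--Hilliard difference with $\tilde\varphi+\mathcal{B}^{-1}\tilde\varphi$ rather than $\mathcal{B}^{-1}\tilde\varphi$ alone; the extra test with $\tilde\varphi$ produces the term $(\nabla\tilde\mu,\nabla\tilde\varphi)=\|\Delta\tilde\varphi\|^2+(F'(\varphi_1)-F'(\varphi_2),-\Delta\tilde\varphi)$, giving the needed second-order dissipation $\|\Delta\tilde\varphi\|^2$ on the left. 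Correspondingly the Gronwall quantity is $\|\tilde\u\|^2+\|\tilde\varphi\|_\ast^2+\|\tilde\varphi\|^2$, and the integrable coefficient involves in particular $\|\mu_2\|_{\mathrm{H}^1}^4$ and $\|\varphi_1\|_{\mathrm{H}^2}^4$, which is exactly where Lemmas~\ref{higher regularity phi} and~\ref{lem4.9} (hence the hypothesis $\varphi_0\in\mathrm{H}^2$) are used.
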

\begin{proof}(\textbf{Case I: $\boldsymbol{r > 3}$}.)
Let $(\u_1, \varphi_1) \text{ and } (\u_2, \varphi_2)$ be two pairs of weak solutions of the system \eqref{equ P} with initial datum $(\u_{01}, \varphi_{01}) , \, (\u_{02}, \varphi_{02}) \in \H\times\mathrm{H}^2$ and forcing $\mathbb{U}_1$ and $\mathbb{U}_2,$ respectively. Since $(\u_{0i}, \varphi_{0i})\in \H \times\mathrm{H}^2,$ one has the regularity given by Lemma \ref{lem4.9}.  Moreover, by Theorem \ref{thm4.10}, weak solutions $(\u_{i}, \varphi_{i})$ satisfy the energy equality also.    Let us define $\z:= \u_1 - \u_2, \, \mathbb{U}= \mathbb{U}_1-\mathbb{U}_2, \, \rho:= \varphi_1 - \varphi_2, \text{ and } \mu:= \mu_1 - \mu_2$. Then $(\z, \rho)$ satisfy the following for all $(\v,\psi)\in \V_{\mathrm{div}}\times\mathrm{H}^1(\Omega)$:
   \begin{align}
     &\langle \boldsymbol{z}', \v \rangle + \nu \langle \nabla\boldsymbol{z}, \nabla\v \rangle + \langle (\boldsymbol{u}_1\cdot \nabla) \boldsymbol{z}, \v \rangle + \langle (\z \cdot\nabla)\u_2, \v \rangle + \beta \langle |\u_1|^{r-1}\u_1 - |\u_2|^{r-1}\u_2, \v \rangle \no\\& \quad= \langle \mu\nabla\varphi_1, \v \rangle + \langle \mu_2\nabla\rho, \v \rangle + \langle\mathbb{U}, \v\rangle,\label{weak form u 1}\\
     &\langle \rho', \psi\rangle +  \langle \z \cdot \nabla\varphi_1, \psi \rangle + \langle\u_2\cdot\nabla\rho, \psi \rangle + \langle \nabla \mu, \nabla\psi \rangle = 0. \label{weak form phi 2}
   \end{align}
   Testing the equations \eqref{weak form u 1} with $\z$ and \eqref{weak form phi 2} with $\rho + \mathcal{B}^{-1}\rho$, we get
   \begin{align}
     \frac{1}{2}\frac{d}{dt}\|\z\|^2 + \nu\|\nabla\z\|^2 &= -\langle (\z \cdot\nabla)\u_2, \z \rangle -\beta\langle |\u_1|^{r-1}\u_1 - |\u_2|^{r-1}\u_2, \z \rangle + \langle \mu\nabla\varphi_1, \z \rangle \no\\
     &\quad+ \langle \mu_2\nabla\rho, \z \rangle  +\langle\mathbb{U}, \z\rangle =: \sum_{i=1}^{5}I_i, \label{difference equ u}
    \end{align}
    and
    \begin{align}
     \frac{1}{2}\frac{d}{dt}(\|\rho\|_{\ast}^2 + \|\rho\|^2) + (\mu, \rho)&= -( \z \cdot \nabla\varphi_1, \rho) - ( \nabla\mu, \nabla\rho ) - ( \z \cdot \nabla\varphi_1, \mathcal{B}^{-1}\rho ) - (\u_2\cdot\nabla\rho, \mathcal{B}^{-1}\rho )\no\\
     & = \sum_{i=1}^{4}J_i,\label{diffrence equ phi}
   \end{align}
   where $\|\rho\|_{\ast} = \|\nabla \mathcal{B}^{-1}\rho\|$. By the Assumption \ref{prop of F} (2) on $F,$ we have 
   \begin{align*}
       (\mu, \rho)= (-\Delta\rho + F'(\varphi_1)-F'(\varphi_2), \rho) \geq \|\nabla\rho\|^2 -C_0\|\rho\|^2.
   \end{align*}
   From the definition $\mathcal{B}^{-1}$ given in \eqref{bes1}, we get
   \begin{align*}
       C_0\|\rho\|^2 = C_0(\nabla\mathcal{B}^{-1}\rho, \nabla\rho) \leq \frac{1}{4}\|\nabla\rho\|^2 + C_0^2\|\rho\|_\ast.
   \end{align*}
   Then, using \eqref{diffrence equ phi} we end up with the following:
   \begin{align}
        \frac{1}{2}\frac{d}{dt}(\|\rho\|_{\ast}^2 + \|\rho\|^2) +\frac{3}{4}\|\nabla\rho\|^2 \leq C_0^2\|\rho\|^2_\ast +\sum_{i=1}^{4}J_i.\label{diffrence equ phi1}
   \end{align}
   We can estimate $I_1, \, I_2$ as discussed  in \cite{MTMohan,MTMohan1}. Special attention has been given for estimating the terms $I_3, \, I_4$. The following inequalities describe the bounds. For $I_1, \, I_2,$ we have
   \begin{align*}
       |I_1| & \leq \frac{\nu}{4}\|\nabla\z\|^2 + \frac{\beta}{2}\||\u_2|^{\frac{r-1}{2}}\z\|^2 + \frac{r-3}{2\nu(r-1)}\Big(\frac{4}{\beta\nu(r-1)}\Big)^{\frac{2}{r-3}}\|\z\|^2,\\
       I_2 & \geq \frac{\beta}{2}\||\u_2|^{\frac{r-1}{2}}\z\|^2.
   \end{align*}
   To estimate $I_3, \, I_4,$ we use a generalized Gagliardo-Nirenberg inequality, Sobolev inequality, and Young's inequality as follows:
   \begin{align*}
       |I_3| & \leq |( \Delta\rho\nabla\varphi_1, \z )| + |( (F'(\varphi_1) - F'(\varphi_2))\nabla\varphi_1, \z )|\no\\
       & \leq \|\Delta\rho\| \|\nabla\varphi_1\|_{\mathbb{L}^6} \|\z\|_{\mathbb{L}^3_{\sigma}} + (\|\varphi_1\|^{p-1}_{\mathrm{L}^\infty}+ \|\varphi_2\|^{p-1}_{\mathrm{L}^\infty}) \|\rho\|\|\nabla\varphi_1\|_{\mathbb{L}^3} \|\z\|_{\mathbb{L}^3_{\sigma}}\no\\
       & \leq \frac{1}{2}\|\Delta\rho\|^2 + \frac{\nu}{12}\|\nabla\z\|^2 + C \|\varphi_1\|_{\mathrm{H}^2}^4\|\z\|^2 + C (\|\varphi_1\|^{2p-2}_{\mathrm{L}^\infty}+ \|\varphi_2\|^{2p-2}_{\mathrm{L}^\infty}) \|\nabla\varphi_1\|^2_{\mathbb{L}^3}\|\rho\|^2 \no\\
       & \leq \frac{1}{2}\|\Delta\rho\|^2 + \frac{\nu}{12}\|\nabla\z\|^2 + C \|\varphi_1\|_{\mathrm{H}^2}^4\|\z\|^2  + C \big(\|\varphi_1\|^{p-1}_{\mathrm{H}^1}\|\varphi_1\|^{p-1} _{\mathrm{H}^2} \no\\ & \quad + \|\varphi_2\|^{p-1}_{\mathrm{H}^1} \|\varphi_2\|^{p-1}_{\mathrm{H}^2}\big) \|\varphi_1\|_{\mathrm{H}^1}\|\varphi_1\|_{\mathrm{H}^2}\|\rho\|^2,\\
       |I_4| & \leq \|\mu_2\|_{\mathrm{L}^6}\|\nabla\rho\|\|\z\|_{\mathbb{L}^3_{\sigma}} \leq \frac{1}{2}\|\nabla\rho\|^2+\frac{1}{2}\|\mu_2\|^2_{\mathrm{L}^6}\|\z\|\|\nabla\z\|\no\\
       & \leq \frac{1}{2}\|\nabla\rho\|^2 + \frac{\nu}{12}\|\nabla\z\|^2 + C\|\mu_2\|^4_{\mathrm{L}^6}\|\z\|^2 \no\\
       &\leq \frac{1}{2}\|\nabla\rho\|^2 + \frac{\nu}{12}\|\nabla\z\|^2 + C\|\mu_2\|^4_{\mathrm{H}^1}\|\z\|^2. \\ 
  \text{and} \;    |I_5| &\leq \frac{1}{2} \| \mathbb{U} \|^2 + \frac{1}{2} \|\z \|^2.
  \end{align*}
   Similarly, we can estimate $J_1$ using a generalized Gagliardo-Nirenberg inequality and Young's inequality as
   \begin{align*}
     |J_1| & \leq \|\rho\|\|\z\|_{\mathbb{L}^6_{\sigma}}\|\nabla\varphi_1\|_{\mathbb{L}^3} 
      \leq \frac{\nu}{12}\|\nabla\z\|^2 + C\|\varphi_1\|_{\mathrm{H}^1}\|\varphi_1\|_{\mathrm{H}^2}\|\rho\|^2.
   \end{align*}
   Moreover, using the assumption on $F$,  we estimate $J_2$ as follows:
   \begin{align*}
       J_2 & = -\|\Delta\rho\|^2 - (F'(\varphi_1) - F'(\varphi_2), \Delta\rho)\leq -\frac{3}{4}\|\Delta\rho\|^2 + C_0\|\rho\|^2.
   \end{align*}
   Following \cite{uniqueness}, we can estimate $J_3,  \text{ and } J_4$ as
   \begin{align*}
       |J_3| & \leq \|\varphi_2\|_{\mathrm{L}^\infty}\|\z\|\|\rho\|_\ast \leq \frac{1}{2}\|\z\|^2 + \frac{1}{2}\|\varphi_1\|_{\mathrm{H}^1}\|\varphi_1\|_{\mathrm{H}^2}\|\rho\|^2_{\ast},\\
       |J_4| &  \leq \|\rho\|_{\mathrm{L}^6}\|\u_2\|_{\mathbb{L}^3_{\sigma}}\|\rho\|_\ast \leq \frac{1}{8}\|\nabla\rho\|^2 + C\|\u_2\|^2_{\mathbb{L}^3_{\sigma}}\|\rho\|^2_{\ast}.
   \end{align*} 
   Taking all the estimates of $I_i$ and $J_i$ into account of \eqref{difference equ u} and \eqref{diffrence equ phi} and adding them together, we finally arrive at 
   \begin{align*}
       &\frac{1}{2}\frac{d}{dt}(\|\z\|^2 + \|\rho\|_{\ast}^2 + \|\rho\|^2) +  \frac{\nu}{4}\|\nabla\z\|^2 + \frac{1}{8}\|\nabla\rho\|^2 + \frac{1}{4}\|\Delta\rho\|^2  \no \\ & \leq C\|\mathbb{U}\|^2_{\V'_{\mathrm{div}}} +\Big(C\|\u_1\|^2_{\mathbb{L}^3_{\sigma}} + \frac{1}{2} \|\varphi_1\|_{\mathrm{H}^1} \|\varphi_1\|_{\mathrm{H}^2}\Big) \|\rho\|^2_{\ast}+ \Big( C\big(\|\varphi_1\|^{p-1}_{\mathrm{H}^1}\|\varphi_1\|^{p-1} _{\mathrm{H}^2}  \no\\ & \quad + \|\varphi_2\|^{p-1}_{\mathrm{H}^1} \|\varphi_2\|^{p-1}_{\mathrm{H}^2}\big) \|\varphi_1\|^2_{\mathrm{H}^1} \|\varphi_1\|_{\mathrm{H}^2} + C\|\varphi_1\|_{\mathrm{H}^1} \|\varphi_1\|_{\mathrm{H}^2} + C_0\Big)\|\rho\|^2  \no\\ & \quad+ \Big( \frac{r-3}{4\nu(r-1)}\Big(\frac{4}{\beta\nu(r-1)}\Big)^{\frac{2}{r-3}} + C\|\varphi_1\|^4_{\mathrm{H}^2} + C\|\mu_2\|^4_{\mathrm{H}^1} + \frac{1}{2}\Big)\|\z\|^2.
   \end{align*}
   Integrating the above inequality between $0$ to $t$ for any $t\in [0, T]$ and applying Gronwall's inequality, we conclude that
   \begin{align*}
   \|\z\|_{\mathrm{L}^\infty(0, T; \H)\cap \mathrm{L}^2(0, T; \V_{\mathrm{div}})} + \|\rho\|_{\mathrm{L}^\infty(0, T; D(\mathcal{B}^{-1})\cap \mathrm{L}^2) \cap \mathrm{L}^2(0, T; \mathrm{H}^2)} \leq C(\|\z_0\|^2 + \|\rho_0\|^2_{\ast}+ \|\rho_0\|^2).
   \end{align*}
   
   (\textbf{Case II: $\boldsymbol{r > 3}$}.)
   For the uniqueness of the weak solution of the system \eqref{equ P}, we  only have to change the estimate of $I_1$ of \eqref{difference equ u}. So we estimate $I_1$ as follows:
 \begin{align*}
     |I_1| & \leq \frac{\nu}{2}\|\nabla\z\|^2 + \frac{1}{2\nu}\|\u_2\z\|^2.
 \end{align*}
 All the other estimates of $I_i \text{ and } J_i$ follow similarly as in the case of  $r>3 $. Therefore, we get
 \begin{align*}
     &\frac{1}{2}\frac{d}{dt}(\|\z\|^2 + \|\rho\|_{\ast}^2 + \|\rho\|^2) + \frac{1}{2}(\beta - \frac{1}{\nu})\|\u_2\z\|^2 + \frac{1}{4}\|\Delta\rho\|^2 \no\\ & \leq C\|\mathbb{U}\|_{\V'_{\mathrm{div}}} + \Big(C\|\u_1\|^2_{\mathbb{L}^3_{\sigma}} + \frac{1}{2} \|\varphi_1\|_{\mathrm{H}^1} \|\varphi_1\|_{\mathrm{H}^2}\Big) \|\rho\|^2_{\ast} 
        + \Big( C\big(\|\varphi_1\|^{p-1}_{\mathrm{H}^1}\|\varphi_1\|^{p-1} _{\mathrm{H}^2}  \no\\ & \quad + \|\varphi_2\|^{p-1}_{\mathrm{H}^1} \|\varphi_2\|^{p-1}_{\mathrm{H}^2}\big) \|\varphi_1\|^2_{\mathrm{H}^1} \|\varphi_1\|_{\mathrm{H}^2} + C\|\varphi_1\|_{\mathrm{H}^1} \|\varphi_1\|_{\mathrm{H}^2} + C_0\Big)\|\rho\|^2 + \Big(C\|\mu_2\|^4_{\mathrm{H}^1} + \frac{1}{2}\Big)\|\z\|^2.
 \end{align*}
 If $\beta\nu \geq 1$, using Gronwall's inequality, we can conclude uniqueness for the case $r=3$ as the case of $r> 3$.
\end{proof}
\begin{remark}
	In fact, for $3<p\leq 6$, if estimate the term $I_4$ as
	\begin{align*}
	 |I_4| & \leq \|\mu_2\|_{\mathrm{L}^p}\|\nabla\rho\|\|\z\|_{\mathbb{L}^{\frac{2p}{p-2}}_{\sigma}} \leq \frac{1}{2}\|\nabla\rho\|^2+\frac{1}{2}\|\mu_2\|^2_{\mathrm{L}^p}\|\z\|^{2-\frac{6}{p}}\|\nabla\z\|^{\frac{6}{p}}\no\\
	& \leq \frac{1}{2}\|\nabla\rho\|^2 + \frac{\nu}{12}\|\nabla\z\|^2 + C\|\mu_2\|^{\frac{2p}{p-3}}_{\mathrm{L}^p}\|\z\|^2,
	\end{align*}
	then the required condition on $\mu_2$ to be $\int_0^T\|\mu_2(s)\|^{\frac{2p}{p-3}}_{\mathrm{L}^p}ds<\infty$. 
\end{remark}
Now, we introduce a novel approach to prove uniqueness of the weak solution of the system \eqref{equ P} for the case of $r=3$, without any condition on the parameters like  viscosity and absorption coefficient. Moreover, the result given below establishes uniqueness result for $(\u_0, \varphi_0) \in \H \times\mathrm{H}^1$. We exploit the energy equality satisfied by weak solution to obtain the required result. Let us discuss the idea briefly. 
\begin{theorem}\label{uni}
Let $(\u, \varphi)$ be a weak solution of \eqref{equ P} with initial condition $(\u_0, \varphi_0) \in \H \times\mathrm{H}^1$ and mobility $m=1.$ Then weak solution of the system \eqref{equ P} is unique for $r\geq3$.
    \end{theorem}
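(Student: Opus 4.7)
The plan is to exploit the energy equality from Theorem \ref{thm4.10} satisfied by each weak solution, combined with a cross-testing identity, to derive an \emph{equality} (not merely an inequality) for $\|\u_1-\u_2\|^2$, and then use the enhanced monotonicity of the damping valid for $r\geq 3$ to close the Gronwall argument without the constraint $\beta\nu\geq 1$. Let $(\u_1,\varphi_1)$ and $(\u_2,\varphi_2)$ be two Leray-Hopf weak solutions of \eqref{equ P} with identical data and $m\equiv 1$, and set $\w:=\u_1-\u_2$, $\rho:=\varphi_1-\varphi_2$, $\mu:=\mu_1-\mu_2$. The $\mathrm{L}^{r+1}_t\mathbb{L}^{r+1}_x$ regularity ensured by the damping term for $r\geq 3$ provides enough integrability to use $\u_2$ as a test function in the weak form of the $\u_1$-equation, and vice versa; mollifying in time as in the proof of Theorem \ref{thm4.10} makes this rigorous. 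The associated cross-identity yields an expression for the evolution of $(\u_1(t),\u_2(t))_{\H}$.

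Adding the energy equalities for $\u_1$ and $\u_2$ and subtracting twice the cross-identity produces the desired identity
\begin{equation*}
\frac{1}{2}\|\w(t)\|^2 + \nu\int_0^t\|\nabla\w\|^2\,ds + \beta\int_0^t\langle |\u_1|^{r-1}\u_1 - |\u_2|^{r-1}\u_2, \w\rangle\,ds = -\int_0^t b(\w,\u_1,\w)\,ds + \int_0^t(\mu_1\nabla\varphi_1-\mu_2\nabla\varphi_2,\w)\,ds,
\end{equation*}
for every $t\in[0,T]$, where the forcing contributions cancel exactly because $\mathbb{U}_1=\mathbb{U}_2$ and all inner-product terms involving initial data collapse using $\u_1(0)=\u_2(0)=\u_0$. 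The analogous manipulation on the Cahn-Hilliard part, testing each $\varphi_i$-equation against $\mu_{3-i}$ and invoking \eqref{bes1}, produces a companion identity for $\|\rho\|^2+\|\rho\|_\ast^2$, where the chemical-potential terms can be re-expressed using $-\Delta\rho+F'(\varphi_1)-F'(\varphi_2)$ and Assumption \ref{prop of F} (2).

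For the damping contribution I would use the pointwise monotonicity $(|\u_1|^{r-1}\u_1-|\u_2|^{r-1}\u_2)\cdot\w \geq C_r(|\u_1|^{r-1}+|\u_2|^{r-1})|\w|^2$ valid for $r\geq 3$, which is sharper than the one-sided bound used in Theorem \ref{unique} because it involves \emph{both} $|\u_1|$ and $|\u_2|$. The convective term is rewritten using the divergence-free property as $b(\w,\u_1,\w)=b(\w,\w,-\u_2)=b(\w,\w,\tfrac{\u_1+\u_2}{2})$, so that $|b(\w,\u_1,\w)|\leq \|\tfrac{|\u_1+\u_2|}{2}\w\|\,\|\nabla\w\|$; combined with $|\u_1+\u_2|^2\leq 2(|\u_1|^2+|\u_2|^2)$, Young's inequality then distributes the estimate between $\nu\|\nabla\w\|^2$ and the enhanced damping. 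The coupling terms $(\mu_1\nabla\varphi_1-\mu_2\nabla\varphi_2,\w)$ are treated as in Theorem \ref{unique}, using Gagliardo-Nirenberg, Sobolev embeddings and Lemma \ref{higher regularity phi}; this time the extra regularity $\varphi_0\in\mathrm{H}^2$ is not required because the structural identity itself absorbs the critical estimate.

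The main obstacle is precisely the critical convective term $b(\w,\u_1,\w)$ at $r=3$. In Theorem \ref{unique}, Case II, its estimate required $\beta\nu\geq 1$ because only $\||\u_2|\w\|^2$ was available from the damping; here, the equality structure together with the enhanced monotonicity produces $\int(|\u_1|^2+|\u_2|^2)|\w|^2$ on the left-hand side, creating enough headroom in Young's inequality to dispense with the condition. A concluding Gronwall estimate applied to $\|\w(t)\|^2+\|\rho(t)\|^2+\|\rho(t)\|_\ast^2$, with integrable coefficients coming from $\u_i\in\mathrm{L}^{r+1}_t\mathbb{L}^{r+1}_x$, $\varphi_i\in\mathrm{L}^\infty_t\mathrm{H}^1_x\cap\mathrm{L}^2_t\mathrm{H}^2_x$ and $\mu_i\in\mathrm{L}^2_t\mathrm{H}^1_x$, yields $\w\equiv 0$ and $\rho\equiv 0$, completing the proof.
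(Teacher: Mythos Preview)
Your approach is fundamentally different from the paper's, and it contains a genuine gap at the critical exponent $r=3$. The paper does \emph{not} derive an energy identity for $\w=\u_1-\u_2$ by cross--testing the two nonlinear equations. Instead it introduces an \emph{auxiliary linearized system}~\eqref{equ} in which the convective term is $(\u\cdot\nabla)\v$ with $\u$ (a fixed weak solution) frozen as a coefficient. For the difference $\tilde{\v}=\v_1-\v_2$ of two solutions of that system one gets $b(\u,\tilde\v,\tilde\v)=0$ by antisymmetry, and the Korteweg coupling terms cancel \emph{exactly} because $\tilde\mu=-\Delta\tilde\psi$ and $(\tilde\mu\nabla\varphi,\tilde\v)=(\tilde\v\cdot\nabla\varphi,\tilde\mu)$. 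The resulting identity \eqref{diff v} has a vanishing right--hand side, so uniqueness (and the companion identity \eqref{diff w}) follow without any Gronwall argument, without estimating $\mu_2\nabla\rho$, and hence without the $\varphi_0\in\mathrm{H}^2$ hypothesis of Lemma~\ref{lem4.9}.

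Your strategy, by contrast, keeps the full nonlinear structure and therefore must still control $b(\w,\u_1,\w)$. Your rewriting $b(\w,\u_1,\w)=-\tfrac12\,b(\w,\w,\u_1+\u_2)$ (note the sign) together with the sharp pointwise bound
\[
(|\u_1|^{2}\u_1-|\u_2|^{2}\u_2)\cdot\w\ \ge\ \tfrac12\,(|\u_1|^{2}+|\u_2|^{2})\,|\w|^{2}
\]
(the constant $\tfrac12$ is optimal: take $\u_1=-\u_2$) and Young's inequality yield, at best,
\[
|b(\w,\u_1,\w)|\ \le\ \tfrac{\nu}{2}\|\nabla\w\|^{2}+\tfrac{1}{4\nu}\!\int_\Omega(|\u_1|^{2}+|\u_2|^{2})|\w|^{2}\,dx.
\]
Absorbing the last term into the damping still forces $\beta\nu\ge\tfrac12$ (or $\beta\nu>\tfrac14$ with a sharper split); the condition is weakened but \emph{not removed}. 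For $r>3$ an extra Young step on the coefficient $(|\u_1|^2+|\u_2|^2)$ pushes the residual into $\|\w\|^2$ and your argument closes, but for $r=3$ the powers match exactly and no amount of ``headroom'' eliminates the constraint. Your claim that the identity structure ``absorbs the critical estimate'' for the coupling term $I_4=(\mu_2\nabla\rho,\w)$ is likewise unsubstantiated: that term still requires $\mu_2\in\mathrm{L}^4(0,T;\mathrm{H}^1)$, which in the paper comes only from Lemma~\ref{lem4.9} and hence from $\varphi_0\in\mathrm{H}^2$. The paper's auxiliary--system trick sidesteps both obstacles simultaneously because the offending terms never appear.
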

    
    \begin{proof}
    Let $(\u, \varphi)$ be a weak solution of the problem \eqref{equ P} satisfying the energy equality \eqref{energy equality} with the initial data $(\u_0,\varphi_0)$. Now we consider the following system:
    \begin{equation}\label{equ}
\left\{
\begin{aligned}
\boldsymbol{v}' - \nu \Delta\boldsymbol{v} + (\boldsymbol{u}\cdot \nabla)\boldsymbol{v} + \beta|\v|^{r-1}\v + \nabla p &= \hat{\mu} \nabla \varphi + \mathbb{U}, \, \, \text{ in } \Omega \times (0,T), \\
  \psi' + \boldsymbol{v} \cdot \nabla \varphi &= \Delta\hat{\mu}, \, \, \text{ in } \Omega \times (0,T), \\
        \hat{\mu} &= -\Delta\psi + F'(\varphi), \\
        \mathrm{div}~\boldsymbol{v} & = 0, \, \, \text{ in } \Omega \times (0,T), \\
        \frac{\partial\psi}{\partial\n} = 0, \,  \frac{\partial\hat{\mu}}{\partial\boldsymbol{n}} & = 0, \,\, \text{ on } \partial\Omega\times(0,T), \\
        \v & = \boldsymbol{0}, \,\, \text{ on } \partial\Omega\times(0,T), \\
        \boldsymbol{v}(0) = \boldsymbol{u}_0 ,\,\, \psi(0) & = \varphi_0, \,\, \text{ in } \Omega,  
\end{aligned}   
\right.
\end{equation}
 where $(\u,\varphi)$ as described above. As the convective term is linear and its regularity is known a-priori (cf. Theorem \ref{LH weak sol} and Definition \ref{weak sol defn}),  proceeding similarly as in Theorem \ref{LH weak sol}, we can show the existence of a Leray-Hopf weak solution  $(\v, \psi)$  of the problem \eqref{equ} such that 
 \begin{equation}\label{exist vpsi}
\left\{
\begin{aligned}
     \v &\in  \mathrm{L}^{\infty}(0, T; \H) \cap \mathrm{L}^2(0, T; \V_{\mathrm{div}}) \cap \mathrm{L}^{r+1}(0, T; \mathbb{L}^{r+1}_{\sigma}),\\
       \psi &\in \mathrm{L}^{\infty}(0, T; \mathrm{H}^1)\cap \mathrm{L}^2(0, T; \mathrm{H}^2),\\
      \tilde  \mu &\in  \mathrm{L}^{2}(0, T; \mathrm{H}^1) \\
\v' &\in \mathrm{L}^2(0, T; \V_{\mathrm{div}}') + \mathrm{L}^{\frac{r+1}{r}}(0, T; \mathbb{L}_\sigma^{\frac{r+1}{r}})\subset \mathrm{L}^{\frac{r+1}{r}}(0, T;\V_{\mathrm{div}}'+ \mathbb{L}_{\sigma}^{\frac{4}{3}}),\\
\psi' & \in \mathrm{L}^2(0, T, (\mathrm{H}^1)') 
 \end{aligned}
 \right.
 \end{equation}
 We claim that the solution $(\v, \psi)$   is also unique. Let $(\v_i, \psi_i), \, i=1,2$ be two solutions with the same initial data $(\u_0,\varphi_0)$ and we set $\Tilde{\v}=\v_1-\v_2, \, \Tilde{\psi}= \psi_1-\psi_2$ and $\Tilde{\mu}= \hat{\mu}_1-\hat{\mu}_2.$ We consider the difference equation satisfied by $(\Tilde{\v}, \Tilde{\psi})$ which is 
\begin{equation}
\left\{
\begin{aligned}
&\Tilde{\v}'-\nu\Delta\Tilde{\v}+(\u\cdot\nabla)\Tilde{\v} + \beta(|\v_1|^{r-1}\v_1-|\v_2|^{r-1}\v_2) +\nabla\Tilde{p}= \Tilde{\mu}\nabla\varphi, \quad \text{ in  }\Omega\times(0, T),\\
&\Tilde{\psi}' +\v_1\cdot\nabla\varphi-\v_2\cdot\nabla\varphi=\Delta\Tilde{\mu}, \quad \text{ in  }\Omega\times(0, T),\\
& \mathrm{div}~\boldsymbol{v}  = 0, \, \, \text{ in } \Omega \times (0,T), \\
&\boldsymbol{\tilde v}   = 0,  \; \frac{\partial \psi}{\partial n} = 0,  \; \frac{\partial \Delta \psi}{\partial n}  = 0  \ \text{ in } \partial \Omega \times (0, T),
 \end{aligned}
 \right.
 \end{equation}
 and initial condition $$\Tilde{\v}(0)=0, \quad \Tilde{\psi}(0)=0, \quad \text{in }\Omega.$$
 Then using similar mollification technique as in Theorem \ref{thm4.10}, one  can show $(\Tilde{\v}, \Tilde{\psi})$ satisfies the following energy equality:
 \begin{align}\label{diff v}
     \|\Tilde{\v}(t)\|^2 &+ \|\nabla\Tilde{\psi}(t)\|^2 +2\nu\int_0^t\|\nabla\Tilde{\v}(s)\|^2ds+ 2\int_0^t\|\nabla\Tilde{\mu}(s)\|^2ds \nonumber\\&+ 2\beta\int_0^t\langle|\v_1(s)|^{r-1}\v_1(s)-|\v_2(s)|^{r-1}\v_2(s), \v_1(s)-\v_2(s)\rangle ds = 0,
 \end{align}
for all $t\in[0,T]$. Observing that 
 \begin{align}\label{410}
     \langle|\v_1|^{r-1}\v_1-|\v_2|^{r-1}\v_2, \v_1-\v_2\rangle \geq  0, \quad \ \mbox{ for all }\ \v_1, \v_2 \in \V_{\mathrm{div}}\cap\mathbb{L}^4_\sigma, \text{ and } r\geq 1,
 \end{align}
 we have from \eqref{diff v} that  $\v_1=\v_2$ and $\psi_1=\psi_2$  in $\mathbb{H}\times \mathrm{L}^2$ for all $t\in[0,T]$. Now we consider the difference of the equations \eqref{equ P} and \eqref{equ}.  We define  $\w = \u-\v, \, \rho = \varphi-\psi$, so that $(\w, \rho)$ satisfies the following:
 \begin{equation}\label{diff wrho}
     \left \{
     \begin{aligned}
         &\boldsymbol{w}' - \nu \Delta\boldsymbol{w} + (\boldsymbol{u}\cdot \nabla)\boldsymbol{w} + \beta(|\u|^{r-1}\u - |\v|^{r-1}\v)+ \nabla q = -\Delta^2\rho \nabla \varphi, \, \, \text{ in } \Omega \times (0,T),\\
      &   \rho' + \boldsymbol{w} \cdot \nabla \varphi = -\Delta \rho, \, \, \text{ in } \Omega \times (0,T),\\
&\mathrm{div}~\boldsymbol{w}  = 0, \, \, \text{ in } \Omega \times (0,T), \\
 &\boldsymbol{w}   = 0,  \; \frac{\partial \rho}{\partial n} = 0,  \; \frac{\partial \Delta \rho}{\partial n}  = 0  \ \text{ in } \partial \Omega \times (0, T),
     \end{aligned}
     \right.
 \end{equation}
 and the initial condition given by 
 \begin{align*}
     \boldsymbol{w}(0) = \boldsymbol{0} ,\,\, \rho(0) & = 0.
 \end{align*}
 It is easy to see, following Theorems \ref{LH weak sol} and \ref{thm4.10}, a Leray-Hopf weak solution $(\w, \rho)$ of the problem \eqref{diff wrho}  with prescribed boundary and initial conditions, exists and   satisfies the regularity given in  \eqref{exist vpsi}. Moreover, using a mollification technique used in \ref{thm4.10}, the following energy equality holds:
   \begin{align}\label{diff w}
     \|\w(t)\|^2 &+ \|\nabla\rho(t)\|^2 +2\nu\int_0^t\|\nabla\w(s)\|^2ds+ 2\int_0^t\|\Delta\rho(s)\|^2 ds\nonumber\\&+ 2\beta\int_0^t\langle|\u(s)|^{r-1}\u(s)-|\v(s)|^{r-1}\v(s),  \w(s)\rangle ds= 0.
 \end{align}  
 Arguing similarly as above, we get $\u =\v,  \, \varphi=\psi$ in $\mathbb{H}\times \mathrm{L}^2$ for all $t\in[0,T]$, that is, the system \eqref{equ} is the same as that of \eqref{equ P}. Since the weak solution of the system \eqref{equ} is unique, \eqref{diff w} shows that the weak solution of the problem \eqref{equ P} is also unique for $r\geq3$. 
\end{proof}

\begin{remark}
1. 	Note that Theorem \ref{uni} provides a result on the uniqueness of weak solutions only, whereas Theorem \ref{unique} provides a result on the continuous dependence on the initial as well as forcing data also. 

\end{remark}

 \section{Weak solution and energy equality: degenerate mobility and logarithmic potential}\label{sec5}
This section considers mobility, which degenerates at $+1, -1$, and $F$, as logarithmic potential. Let $m\in C^1([-1, 1]), \, m  \geq 0, \, \text{and } m(s) = 0 \text{ if and only if } s = 1, -1.$  Also assume that there exists $\epsilon_0$ such that $m$ is non-increasing in $[1-\epsilon_0,1]$ and  non-decreasing in $[-1, -1+\epsilon_0]$. We extend the function $m$ over whole $\mathbb{R}$ by taking $m(s)= 0 \text{ for } |s| > 1$. Moreover, $m \text{ and } F$ satisfy the following conditions:
\begin{assumption}
The functions $m \text{ and } F$ satisfy
\begin{enumerate}
    \item[(H1)] $F \in C^1((-1, 1)) \text{ such that } mF'' \in C([-1, 1]).$
    \item[(H2)] $F = F_1 + F_2, \, F_1$ is the singular part and $F_2$ is the regular part of $F$ such that $\|F_2\|_{C^2([-1, 1])}\leq C.$
    \item[(H3)] There exist $\epsilon_0>0$ such that $F_1''$ is non-decreasing in $[1-\epsilon_0, 1]$ and non-increasing in $[-1, -1+\epsilon_0]$. 
    \item[(H4)] $F(\varphi_0) \in \mathrm{L}^1(\Omega).$
    \item[(H5)] $F_1''(s) \geq \alpha \quad \forall s\in \mathbb{R},$ for some $\alpha>0$.
\end{enumerate}
\end{assumption}
The functions defined in \eqref{ex m} and \eqref{log pot} satisfy all the above  hypotheses.
Let us consider a function $G:(-1, 1) \rightarrow \mathbb{R}^+$ such that $G(0) = 0, \, G'(0) = 0, \, G''(s)= \frac{1}{m(s)}.$

In order to show the existence of a weak solution in this case, we will consider an approximated problem. We show that the limit of the solution of the approximate problem is a weak solution of the system \eqref{equ P} with logarithmic potential. Since $m$ vanishes at $1 \text{ and } -1$, we cannot bound the term $\nabla\mu$ in some $\mathbb{L}^p$ or Sobolev spaces (for example, see \eqref{fin aprox energy}). Therefore, to pass to the limit in the approximated problem, we must suitably redefine the notion of the weak solution so that $\mu$ does not appear in the variational formulation.
\begin{definition}\label{def deg weak sol}
   We  say that a pair $(\u, \varphi)$ is a \emph{weak solution} of the system \eqref{equ P} on the time interval [0, T) with initial condition $(\u_0, \varphi_0) \in \H \times \mathrm{L}^2$, if 
   \begin{align*}
       \u &\in \mathrm{L}^\infty(0, T; \H) \cap \mathrm{L}^2(0, T; \V_{\mathrm{div}}) \cap \mathrm{L}^{r+1}(0, T; \mathbb{L}^{r+1}_{\sigma}),\\
       \varphi &\in \mathrm{L}^\infty(0, T; \mathrm{H}^1)\cap \mathrm{L}^2(0, T; \mathrm{H}^2),\\
       \u' & \in \mathrm{L}^{\frac{4}{3}}(0, T; \V_{\mathrm{div}}') + \mathrm{L}^{\frac{r+1}{r}}(0, T; \mathbb{L}_\sigma^{\frac{r+1}{r}}),\quad \text{for } 1\leq r < 3,\\
       \u'  & \in \mathrm{L}^2(0, T; \V_{\mathrm{div}}') + \mathrm{L}^{\frac{r+1}{r}}(0, T; \mathbb{L}_\sigma^{\frac{r+1}{r}}), \quad \text{for } r\geq 3\\
       \varphi' & \in \mathrm{L}^2(0, T, (\mathrm{H}^1)'), \\
       \varphi & \in \mathrm{L}^\infty(\Omega\times(0, T))  \text{ and } |\varphi(x, t)| \leq 1 \quad \text{a.e.} \ (x, t) \in \Omega\times(0, T),
       \end{align*}
     where, $(\u, \varphi)$ satisfies the variational formulation
   \begin{align}
     &\int_0^T\langle \boldsymbol{u}'(s), \v(s)\rangle ds + \nu \int_0^T(\nabla\boldsymbol{u}(s), \nabla\v(s)) ds+ \int_0^T\langle (\boldsymbol{u}(s)\cdot \nabla) \boldsymbol{u}(s), \v(s) \rangle ds  \no\\
     &+ \beta \langle |\u(s)|^{r-1}\u(s), \v(s) \rangle ds  + \int_0^T(\Delta\varphi(s) \nabla \varphi(s), \v(s)) ds = \int_0^T\langle\mathbb{U}(s), \v(s)\rangle ds, \label{1weak form u}\\
     & \int_0^T\langle \varphi'(s), \psi(s)\rangle ds  + \int_0^T(\boldsymbol{u}(s) \cdot \nabla \varphi(s), \psi(s)) ds + \int_0^T(m(\varphi)\nabla \Delta\varphi(s), \nabla\psi(s)) ds  \no\\
      &+ \int_0^T(m(\varphi)F''(\varphi(s))\nabla\varphi(s), \nabla\psi(s)) ds = 0, \label{1weak form phi}
   \end{align} 
      for a.e. $s \in [0, T]$ and $\v \in \mathbb{D}_{\sigma}(\Omega \times [0, T]), \, \psi \in C^\infty(\overline{\Omega}\times [0, T])$. Additionally, it holds that 
   \begin{align}\label{deg_in_data_cgts}
       \lim\limits_{t\to 0^+}\|\u(t)-\u_0\|=0\ \text{ and }\ \lim\limits_{t\to 0^+}\|\varphi(t)-\varphi_0\|_{\mathrm{H}^1}=0.
   \end{align}
\end{definition}

\begin{theorem}\label{thm 6.3}
    Let $\Omega$  be a bounded domain in $\mathbb{R}^3$ with sufficiently smooth boundary $\partial\Omega$. Let $\varphi_0\in \mathrm{H}^1$ with $|\varphi_0| \leq 1$ a.e. and $\u_0 \in \H$. Also, let $F$ and $m$ satisfy the Hypothesis (H1)-(H5). Then there exists a weak solution of the system \eqref{equ P} in the sense of Definition \ref{def deg weak sol} and satisfies the following energy inequality:
    \begin{align}\label{deg energy}
&\frac{1}{2}\Big(\|\u(t)\|^2 + \|\varphi(t)\|^2\Big) + \int_0^t\|\sqrt{m(\varphi(s))F''(\varphi(s))}\nabla\varphi(s)\|^2 ds +\nu\int_0^t\|\nabla\u(s)\|^2 ds \no\\ &\quad  + \beta\int_0^t\|\u(s)\|^{r+1}_{\mathbb{L}_{\sigma}^{r+1}} ds
    +\int_0^t\int_\Omega\Delta\varphi(x,s)\nabla\varphi(x,s)\cdot\u(x, s) dx ds \no\\ &\leq \frac{1}{2}\Big(\|\u_0\|^2 + \|\varphi_0\|^2 \no\Big)+ \int_0^t\langle\mathbb{U}(s), \u(s)\rangle \, ds \no\\ & \quad + \int_0^t\int_\Omega m(\varphi(x,s))\nabla(\Delta\varphi(x,s))\cdot\nabla\varphi(x, s)  dx ds,
\end{align}
for all $t\in[0,T]$.
\end{theorem}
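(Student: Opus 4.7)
The plan is to obtain $(\u,\varphi)$ as the limit of solutions to a doubly-regularised problem that falls under Theorem \ref{LH weak sol}. For each $\varepsilon\in(0,\varepsilon_0)$, I would build a regular potential $F_\varepsilon\in C^2(\R)$ by replacing $F_1''$ with its boundary values outside $[-1+\varepsilon,1-\varepsilon]$ (and continuing smoothly, exploiting (H3)), so that $F_\varepsilon$ fulfils Assumption \ref{prop of F} with constants uniform in $\varepsilon$, $F_\varepsilon\to F$ pointwise on $(-1,1)$, and $F_\varepsilon$ diverges uniformly on compact subsets of $\R\setminus[-1,1]$. Set $m_\varepsilon(s):=m(s)+\varepsilon$, which is non-degenerate and bounded above independently of $\varepsilon$, while (H1) ensures $m_\varepsilon F_\varepsilon''\to mF''$ in $C([-1,1])$. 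Theorem \ref{LH weak sol} then supplies a Leray--Hopf weak solution $(\u_\varepsilon,\varphi_\varepsilon)$ with chemical potential $\mu_\varepsilon=-\Delta\varphi_\varepsilon+F_\varepsilon'(\varphi_\varepsilon)$ satisfying the corresponding version of \eqref{energy inequality1}.

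The next step is the extraction of uniform estimates. The regularised energy inequality gives bounds on $\u_\varepsilon$ in $\mathrm{L}^\infty(0,T;\H)\cap\mathrm{L}^2(0,T;\V_{\mathrm{div}})\cap\mathrm{L}^{r+1}(0,T;\mathbb{L}^{r+1}_\sigma)$, on $\varphi_\varepsilon$ in $\mathrm{L}^\infty(0,T;\mathrm{H}^1)$, on $\sqrt{m_\varepsilon(\varphi_\varepsilon)}\nabla\mu_\varepsilon$ in $\mathrm{L}^2(0,T;\mathbb{L}^2)$, and on $\int_\Omega F_\varepsilon(\varphi_\varepsilon(\cdot,t))\,dx$ in $\mathrm{L}^\infty(0,T)$; the last, combined with the divergence of $F_\varepsilon$ outside $[-1,1]$, will force $|\varphi|\leq 1$ almost everywhere in the limit via Fatou's lemma. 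To recover the $\mathrm{L}^2(0,T;\mathrm{H}^2)$ bound for $\varphi_\varepsilon$ and the target quantity $\int m_\varepsilon(\varphi_\varepsilon)F_\varepsilon''(\varphi_\varepsilon)|\nabla\varphi_\varepsilon|^2$, I would substitute $\nabla\mu_\varepsilon=-\nabla\Delta\varphi_\varepsilon+F_\varepsilon''(\varphi_\varepsilon)\nabla\varphi_\varepsilon$ into $\|\sqrt{m_\varepsilon(\varphi_\varepsilon)}\nabla\mu_\varepsilon\|^2$, expand, and exploit (H3)--(H5) plus integration by parts in the cross term (with boundary contributions vanishing thanks to the Neumann conditions) to produce positive-definiteness of the quadratic form modulo already-controlled terms. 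Time-derivative bounds for $\u_\varepsilon'$ and $\varphi_\varepsilon'$ follow verbatim from the proof of Theorem \ref{LH weak sol}.

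An application of the Aubin--Lions lemma then yields a subsequence with $\u_\varepsilon\to\u$ strongly in $\mathrm{L}^2(0,T;\H)$, $\varphi_\varepsilon\to\varphi$ strongly in $\mathrm{L}^2(0,T;\mathrm{H}^1)$ and in $C([0,T];\mathrm{L}^2)$, together with a.e.\ convergence. The crucial feature of Definition \ref{def deg weak sol} is that $\mu$ is absent: the mobility flux is rewritten as $m\nabla\mu=-m\nabla\Delta\varphi+mF''(\varphi)\nabla\varphi$, so only $m_\varepsilon(\varphi_\varepsilon)$ paired with $\mathrm{H}^2$-information on $\varphi_\varepsilon$ and the product $m_\varepsilon(\varphi_\varepsilon)F_\varepsilon''(\varphi_\varepsilon)$ paired with $\nabla\varphi_\varepsilon$ require passage to the limit. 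Dominated convergence (using the uniform sup-bound on $m_\varepsilon$ and a.e.\ convergence of $\varphi_\varepsilon$) handles the former after an additional integration by parts against smooth $\psi$, while (H1) and the a.e.\ bound $|\varphi|\leq 1$ yield $m_\varepsilon(\varphi_\varepsilon)F_\varepsilon''(\varphi_\varepsilon)\to m(\varphi)F''(\varphi)$ strongly in every $\mathrm{L}^p(\Omega\times(0,T))$, enabling the limit against $\nabla\varphi_\varepsilon\rightharpoonup\nabla\varphi$ in $\mathrm{L}^2$. The convective term, the Brinkman--Forchheimer damping, the capillary coupling $\Delta\varphi\,\nabla\varphi$, and the initial-data convergence \eqref{deg_in_data_cgts} are dealt with exactly as in the proof of Theorem \ref{LH weak sol}.

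The energy inequality \eqref{deg energy} follows by taking $\liminf$ in the regularised inequality, using weak lower semicontinuity in $\H,\V_{\mathrm{div}},\mathbb{L}^{r+1}_\sigma$ and $\mathrm{H}^1$, together with Fatou's lemma for the nonnegative integrand $m_\varepsilon(\varphi_\varepsilon)F_\varepsilon''(\varphi_\varepsilon)|\nabla\varphi_\varepsilon|^2$ and the strong convergences already obtained. The principal obstacle throughout is that $\mu_\varepsilon$ itself is \emph{not} uniformly bounded (since $m_\varepsilon\to 0$ on $\{|\varphi|=1\}$), which forces every argument to be phrased in terms of the products $m_\varepsilon\nabla\mu_\varepsilon$ and $m_\varepsilon F_\varepsilon''$ rather than $\mu_\varepsilon$; this is precisely where Hypothesis (H1) is indispensable. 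A secondary subtlety is verifying $|\varphi|\leq 1$ a.e., which hinges on choosing the extension of $F_\varepsilon$ outside $[-1,1]$ so that the uniform $\mathrm{L}^1$-control on $F_\varepsilon(\varphi_\varepsilon)$ effectively penalises the set $\{|\varphi_\varepsilon|>1\}$ as $\varepsilon\to 0^+$.
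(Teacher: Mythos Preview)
Your overall strategy (regularise, invoke Theorem \ref{LH weak sol}, extract compactness, pass to the limit in the $\mu$-free formulation) is the same as the paper's, but two of the steps as you describe them do not go through.

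First, the additive regularisation $m_\varepsilon=m+\varepsilon$ does not keep $m_\varepsilon F_\varepsilon''$ uniformly bounded. Hypothesis (H1) only controls $mF''$, and the residual term $\varepsilon F_{1\varepsilon}''$ satisfies $\sup_s\varepsilon F_{1\varepsilon}''(s)=\varepsilon F_1''(1-\varepsilon)$, which (since $m(1-\varepsilon)F_1''(1-\varepsilon)$ is bounded by (H1)) is of order $\varepsilon/m(1-\varepsilon)$ and therefore diverges whenever $m$ degenerates super-linearly at $\pm1$ (e.g.\ $m(s)=(1-s^2)^2$, allowed by the hypotheses). This breaks both your claimed $C([-1,1])$ convergence of $m_\varepsilon F_\varepsilon''$ and the dominated-convergence step for the last term in \eqref{1weak form phi}. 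The paper avoids this by truncating $m$ at the \emph{same} level as $F_1''$: setting $m_\varepsilon(s)=m(1-\varepsilon)$ for $s\ge 1-\varepsilon$ (and symmetrically near $-1$) forces $m_\varepsilon F_{1\varepsilon}''\equiv (mF_1'')(1-\varepsilon)$ there, so uniform boundedness follows directly from (H1).

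Second, expanding $\|\sqrt{m_\varepsilon}\nabla\mu_\varepsilon\|^2$ does not deliver a uniform $\mathrm{L}^2(0,T;\mathrm{H}^2)$ bound on $\varphi_\varepsilon$: after the integration by parts you propose, the positive pieces are $\int m_\varepsilon|\nabla\Delta\varphi_\varepsilon|^2$ and $\int m_\varepsilon F_\varepsilon''(\Delta\varphi_\varepsilon)^2$, whose coefficients vanish with $\varepsilon$, together with a cross term involving $(m_\varepsilon F_\varepsilon'')'$ over which you have no uniform control. The device the paper uses instead is the Elliott--Garcke entropy: with $G_\varepsilon''=1/m_\varepsilon$ and $G_\varepsilon(0)=G_\varepsilon'(0)=0$, testing the $\varphi_\varepsilon$-equation against $G_\varepsilon'(\varphi_\varepsilon)$ makes the mobility cancel exactly and yields
\[
\frac{d}{dt}\int_\Omega G_\varepsilon(\varphi_\varepsilon)\,dx+\|\Delta\varphi_\varepsilon\|^2+\int_\Omega F_\varepsilon''(\varphi_\varepsilon)|\nabla\varphi_\varepsilon|^2\,dx=0.
\]
This simultaneously provides the missing uniform $\mathrm{H}^2$ bound and the estimate $\int_\Omega G_\varepsilon(\varphi_\varepsilon(t))\,dx\le C$. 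Since the paper's truncation gives $G_\varepsilon(s)\ge (s-1)^2/(2m(1-\varepsilon))$ for $s>1$ and $m(1-\varepsilon)\to0$, one obtains $\int_\Omega(|\varphi_\varepsilon|-1)_+^2\,dx\to0$ and hence $|\varphi|\le 1$ purely from the degeneration of $m$; no blow-up of $F_\varepsilon$ outside $[-1,1]$ is needed, which is fortunate because such blow-up is not guaranteed by (H1)--(H5) as stated.
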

\begin{remark}
  Note that in this case also, we can argue similarly in the proofs of Corollary \ref{cor1} and Corollary \ref{cor4.4}  so that both of them hold. 
 Moreover, weak solutions are weakly continuous in time on $\H\times\mathrm{L}^2$.
\end{remark}
\begin{proof}[Proof of Theorem \ref{thm 6.3}:]
We first approximate degenerate mobility and logarithmic potential by non-degenerate mobility and regular potential, respectively. Thus, we introduce a positive non-degenerate mobility $m_\epsilon$ as 
\begin{align*}
    m_\epsilon(s) =\left\{\begin{array}{ll} m(-1+\epsilon) & \text{ for }\  s \leq -1+\epsilon,\\
                    m(s) & \text{ for }\  |s| < 1-\epsilon,\\
                    m(1-\epsilon) & \text{ for }\  s \geq 1-\epsilon,\end{array}\right.
\end{align*}
accordingly, we define $G''_\epsilon(s) = \frac{1}{m_\epsilon(s)}$ with $G_\epsilon'(0) = 0, \, G_\epsilon(0)=0.$ Note that $m_\epsilon(s) = m(s) \text{ for } |s| \leq 1-\epsilon.$ Now we write approximation of $F$ as $F_\epsilon = F_{1\epsilon} + F_2,$ where $F_{1\epsilon}''$ is given by 
\begin{align*}
    F''_{1\epsilon}(s) = \left\{\begin{array}{ll} F''_1(-1+\epsilon) & \text{ for } \ s \leq -1+\epsilon,\\
                    F_1''(s) & \text{ for } \ |s| < 1-\epsilon,\\
                    F_1''(1-\epsilon) & \text{ for }\  s \geq 1-\epsilon,\end{array}\right.
\end{align*}
with $F_{1\epsilon}(0) = F_1(0), \, F'_{1\epsilon}(0) = F_1'(0).$ From (H2), we have $\|F_2\|_{C^2([-1, 1])} \leq C$, but we can extend $F_2$ continuously to $\mathbb{R}$ so that $\|F_2\|_{C^2(\mathbb{R})} \leq C_1$.
From (H3), we have $F_{1\epsilon}(s) \leq F_1(s) \quad \forall s \in (-1, 1) \text{ and } \epsilon \in (0, \epsilon_0]$, the same $\epsilon_0$ as mentioned in (H3).  Indeed, if $1-\epsilon \leq s < 1$, and $\epsilon \leq \epsilon_0$
\begin{align*}
    F_1(s) =& F_1(1-\epsilon)+F_1'(1-\epsilon)(s-(1-\epsilon)) + \frac{1}{2}F_1''(\xi_s)(s-(1-\epsilon))^2\\
           \geq &  F_1(1-\epsilon)+F_1'(1-\epsilon)(s-(1-\epsilon)) + \frac{1}{2}F_1''(1-\epsilon)(s-(1-\epsilon))^2 =F_{1\epsilon},
\end{align*}
where $1-\epsilon < \xi_s < s.$ Similarly for $-1<s\leq -1+\epsilon$ and $\epsilon<\epsilon_0,$ we have $F_{1\epsilon}(s) \leq F_1(s)$. Further we have $F_{1\epsilon}(s) = F_1(s)$ for $|s|\leq 1-\epsilon$. We refer to \cite{2015Frigeri} for more details on these approximations.

As $\|F_2\|_{C^2(\mathbb{R})} \leq C$, there exist two positive constant $C_1$ and $C_2$ such that $|F_2(s)| \leq C_1|s|^2 + C_2.$
Thus using the assumption on initial data $\varphi_0$ and (H4), we have 
\begin{align}\label{6.4}
   \int_\Omega F_\epsilon(\varphi_0) = \int_\Omega (F_{1\epsilon} +F_2)(\varphi_0) \leq \int_\Omega (F_1(\varphi_0) + C_1|\varphi_0|^2 + C_2)  < \infty,
\end{align}
which implies $F_\epsilon(\varphi_0) \in \mathrm{L}^1(\Omega).$ Also, using (H5) and Assumption \ref{prop of F} (2), one can easily verify that
\begin{align*}
    F''_\epsilon(s) \geq -C_0 \quad \forall s\in \mathbb{R}.
\end{align*}
Then, using Theorem \ref{LH weak sol}, there exists a  pair $(\u_\epsilon, \varphi_\epsilon)$ which is a Leray-Hopf weak solution of the following approximated problem:
\begin{equation}\label{approx P}
\left\{
\begin{aligned}
&\boldsymbol{u}'_\epsilon - \nu \Delta\boldsymbol{u}_\epsilon + (\boldsymbol{u}_\epsilon\cdot \nabla)\boldsymbol{u}_\epsilon + \beta|\u_\epsilon|^{r-1}\u_\epsilon + \nabla \pi_\epsilon = \mu_\epsilon \nabla \varphi_\epsilon, \, \, \text{ in } \Omega \times (0,T), \\
&  \varphi'_\epsilon + \boldsymbol{u}_\epsilon \cdot \nabla \varphi_\epsilon = \mathrm{div \ }(m_\epsilon(\varphi_\epsilon)\nabla \mu_\epsilon), \, \, \text{ in } \Omega \times (0,T), \\
 &       \mu_\epsilon = -\Delta\varphi_\epsilon + F'_\epsilon(\varphi_\epsilon), \\
   &     \mathrm{div}~\boldsymbol{u}_\epsilon  = 0, \, \, \text{ in } \Omega \times (0,T), \\
   &   \frac{\partial\varphi_\epsilon}{\partial\n} = 0, \,  \frac{\partial\mu_\epsilon}{\partial\boldsymbol{n}}  = 0, \,\, \text{ on } \partial\Omega\times [0,T], \\
    &    \u_\epsilon  = \boldsymbol{0}, \,\, \text{ on } \partial\Omega\times [0,T], \\
     &   \boldsymbol{u}_\epsilon(0) = \boldsymbol{u}_0 ,\,\, \varphi_\epsilon(0)  = \varphi_0, \,\, \text{ in } \Omega.  
\end{aligned}   
\right.
\end{equation}
The approximated problem \eqref{approx P} is obtained by replacing logarithmic potential $F$ by its regular approximation $F_\epsilon$ and degenerate mobility $m$ by non-degenerate mobility $m_\epsilon$ in \eqref{equ P}. Therefore the approximated solution $(\u_\epsilon, \varphi_\epsilon)$ satisfies the following energy estimate:
\begin{align}\label{approx energy estimate}
   \frac{1}{2}\bigg( \|\u_\epsilon(t)\|^2 +& \|\nabla\varphi_\epsilon(t)\|^2 + \int_\Omega F_\epsilon(\varphi_\epsilon)\bigg) + \nu \int_{0}^{t}\|\nabla\u_\epsilon(s)\|^2ds + \beta \int_{0}^{t}\|\u_\epsilon(s)\|_{\mathbb{L}^{r+1}_{\sigma}}^{r + 1}ds\no\\& + \int_{0}^{t}\|\sqrt{m_\epsilon(\varphi_\epsilon)}\nabla\mu_\epsilon\|^2ds \leq  \frac{1}{2}\bigg(\|\u_0\|^2 + \|\nabla\varphi_0\|^2 + \int_\Omega F(\varphi)\bigg), 
\end{align}
for all $t\in [0, T],$ and the right-hand side is independent of $\epsilon$ (see \eqref{6.4}). Now, we will derive some a priori estimates to pass to the limit in the following variational formulation of the approximated equation:
\begin{align}
    &\langle\boldsymbol{u}'_\epsilon(t), \v(t)\rangle + \nu (\nabla\boldsymbol{u}_\epsilon(t), \nabla\v(t)) + \langle(\boldsymbol{u}_\epsilon(t)\cdot \nabla)\boldsymbol{u}_\epsilon(t), \v(t)\rangle + \beta\langle|\u_\epsilon(t)|^{r-1}\u_\epsilon(t), \v(t)\rangle \no\\ & \qquad \qquad\qquad= (-\Delta\varphi_\epsilon(t) \nabla\varphi_\epsilon(t), \v(t)),\label{ueps}\\
   & \langle\varphi_\epsilon'(t), \psi(t)\rangle + (\boldsymbol{u}_\epsilon(t) \cdot \nabla \varphi_\epsilon(t), \psi(t))  = (m_\epsilon(\varphi_\epsilon(t))\nabla(\Delta\varphi_\epsilon(t)), \nabla\psi(t)) \no\\& \qquad\qquad-(m_\epsilon (\varphi_\epsilon(t) )F''_\epsilon(\varphi_\epsilon(t))\nabla\varphi_\epsilon(t), \nabla\psi(t)\rangle,\label{phieps}
\end{align}
 for a.e. $t\in [0, T]$ and for $\v \in \mathbb{D}_{\sigma}(\Omega \times [0, T]), \, \psi \in C^\infty(\overline{\Omega}\times [0, T])$. 
We have 
\begin{align*}
    \Delta\varphi_\epsilon = -\mu_\epsilon + F'_\epsilon(\varphi_\epsilon).
\end{align*}
From the elliptic regularity theorem (also see Remark \ref{rem4.7}, Assumption \ref{prop of F} (2)), we can deduce that $\varphi_\epsilon \in \mathrm{L}^2(0, T; \mathrm{H}^2).$ Indeed we have (see \eqref{aprox del phi})
\begin{align}\label{delphi}
    \|\Delta\varphi_\epsilon\|^2 \leq \left(C_3 + \frac{1}{2}\right)\|\nabla\varphi_\epsilon\|^2 + \frac{1}{2}\|\nabla\mu_\epsilon\|^2.
\end{align}
Now we will show that $\nabla\Delta\varphi_\epsilon \in \mathrm{L}^2(0, T;\mathbb{L}^2).$ For that, we write
\begin{align}
    \nabla\Delta\varphi_\epsilon = -\nabla\mu_\epsilon + F_\epsilon''(\varphi_\epsilon)\nabla\varphi_\epsilon. \label{H3 estimate}
\end{align}
Since right hand side of \eqref{H3 estimate} is in $\mathrm{L}^2(0, T;\mathbb{L}^2)$ (see Assumption \ref{prop of F} (5)), we infer that  $\|\nabla\Delta\varphi_\epsilon\|_{\mathrm{L}^2(0,T;\mathbb{L}^2)}$ is uniformly bounded.
\vskip 0.1cm
\noindent 
\textbf{(Time derivative estimate.)} For $1\leq r < 3,$
in the variational formulation $\eqref{ueps}$ we take  $\v \in \mathrm{L}^4(0, T;\V_{\mathrm{div}})\cap \mathrm{L}^{r+1}(0, T; \mathbb{L}^{r+1}_\sigma)$ (after using a density argument)  and write
\begin{align}
    \int_0^T\langle\u'_\epsilon(t), \v\rangle dt&=  - \nu \int_0^T(\nabla\boldsymbol{u}_\epsilon(t), \nabla\v ) dt+ \int_0^T\langle(\boldsymbol{u}_\epsilon(t)\cdot \nabla)\boldsymbol{u}_\epsilon(t), \v\rangle dt \no\\& \quad+ \beta\int_0^T\langle|\u_\epsilon(t)|^{r-1}\u_\epsilon(t), \v\rangle dt + \int_0^T(\mu_\epsilon(t) \nabla \varphi_\epsilon(t), \v) dt.
\end{align}
Note that the  term involving  $\mu_\epsilon$ needs to be estimated since the estimates for  other terms will follow similarly to the non-degenerate mobility case. 
By performing an integration by parts 
\begin{align*}
    \int_0^T|(\mu_\epsilon(t)\nabla \varphi_\epsilon(t), \v)| dt=& \int_0^T|((-\Delta\varphi_\epsilon(t) +  F'_\epsilon(\varphi_\epsilon(t)))\nabla\varphi_\epsilon(t), \v)|dt\no\\
    \leq & \int_0^T\bigg|\sum_{i, j =1}^3\int_\Omega\frac{\partial^2\varphi_\epsilon}{\partial x_j^2}(t)\bigg(\frac{\partial\varphi_\epsilon}{\partial x_i}(t)\cdot v_i\bigg)\bigg| dt + \int_0^T|(\nabla F_\epsilon(\varphi_\epsilon(t)), \v)| dt\no\\
    \leq & \int_0^T \|\nabla\Delta\varphi_\epsilon(t)\|\|\v\|_{\mathbb{L}^3_{\sigma}}\|\varphi_\epsilon(t)\|_{\mathrm{L}^6} dt\no\\
    \leq & \|\varphi_\epsilon\|_{\mathrm{L}^{\infty}(0, T;\mathrm{H}^1)} \|\nabla\Delta\varphi_\epsilon\|_{\mathrm{L}^2(0,T;\mathbb{L}^2)} \|\v\|_{\mathrm{L}^2(0, T; \V_{\mathrm{div}})}.
    \end{align*}
    For $r\geq 3$ we take $\v \in \mathrm{L}^2(0, T;\V_{\mathrm{div}})\cap \mathrm{L}^{r+1}(0, T; \mathbb{L}^{r+1}_\sigma)$ and, the same estimate holds. 
Similarly, taking  $\psi \in \mathrm{L}^2(0, T;\mathrm{H}^1)$  in the equation \eqref{phieps} and using the energy estimate \eqref{approx energy estimate}, one can easily estimate $\varphi'_\epsilon$. Therefore, we have 
\begin{equation}
\left\{
    \begin{aligned}
    &\|\u'_\epsilon\|_{\mathrm{L}^\frac{4}{3}(0, T; \V_{\mathrm{div}}')} < \infty, \quad 1\leq r < 3,\\
    &\|\u'_\epsilon\|_{\mathrm{L}^2(0, T; \V_{\mathrm{div}}') + \mathrm{L}^{\frac{r+1}{r}}(0, T; \mathbb{L}_{\sigma}^{\frac{r+1}{r}})} < \infty, \quad r \geq 3,\\
    &\|\varphi'_\epsilon\|_{\mathrm{L}^2(0, T; (\mathrm{H}^1)')} < \infty.
\end{aligned}
\right.
\end{equation}
Using these uniform bounds on $\u_\epsilon$, we can extract a subsequence so that there exists  a function $\u$ such that
\begin{equation}\label{convergence}
\left\{
    \begin{aligned}
      \u_\epsilon & \xrightharpoonup* \u \text{ in } \mathrm{L}^\infty(0, T; \H) , \\
   \u_\epsilon & \rightharpoonup \u \text{ in } \mathrm{L}^2(0, T; \V_{\mathrm{div}}) , \\
    \u_\epsilon & \rightharpoonup \u \text{ in } \mathrm{L}^{r+1}(0, T; \mathbb{L}_\sigma^{r +1}) , \\
    |\u_\epsilon|^{r-1}\u_\epsilon & \rightharpoonup \z \text{ in } \mathrm{L}^{\frac{r+1}{r}}(0, T; \mathbb{L}_\sigma^{\frac{r +1}{r}}) ,\\
     \u'_\epsilon & \rightharpoonup \u' \left\{\begin{array}{ll}  \text{in } \mathrm{L}^\frac{4}{3}(0, T; \V_{\mathrm{div}}') &\text{ for } 1\leq r<3,\\  \text{ in } \mathrm{L}^2(0, T; \V_{\mathrm{div}}') + \mathrm{L}^{\frac{r+1}{r}}(0, T; \mathbb{L}_{\sigma}^{\frac{r+1}{r}})&\text{ for } r\geq 3.
     \end{array}\right.
 \end{aligned}
\right.
\end{equation}
Similarly using uniform bounds on $\varphi_\epsilon,$ we can extract a subsequence such that there exist $\varphi$ and following convergences hold:
\begin{equation}\label{convergences}
\left\{
    \begin{aligned}
    \varphi_\epsilon & \xrightharpoonup* \varphi \text{ in } \mathrm{L}^\infty(0, T; \mathrm{H^1}) , \\
     \varphi_\epsilon & \rightharpoonup \varphi \text{ in } \mathrm{L}^2(0, T; \mathrm{H}^2) \\
     \varphi'_\epsilon & \rightharpoonup \varphi' \text{ in } \mathrm{L}^2(0, T; (\mathrm{H}^1)')\\
   \varphi_\epsilon & \rightarrow \varphi \text{ in } \mathrm{L}^2(0, T; \mathrm{H^1}) ,\\
   \varphi_\epsilon(x,t) & \rightarrow \varphi(x, t)\  \text{a.e. } (x,t) \in \Omega\times(0, T),\\
   F(\varphi_\epsilon) & \xrightharpoonup* F^\ast \text{ in } \mathrm{L}^\infty(0, T; \mathrm{L}^1).\\
    \end{aligned}
\right.
\end{equation}
As discussed in \eqref{u strong}, we have  strong convergence of $\u_\epsilon$ along a subsequence still denoted by $\u_\epsilon$ for convenience.
\begin{align}
     \u_\epsilon \rightarrow \u \ \text{ in } \ \mathrm{L}^2(0, T; \H), \  \text{ and } \ \u_\epsilon(x,t)  \rightarrow \u(x,t)\ \text{ a.e. }\ (x,t)\in\Omega\times(0, T).\label{u strong1}
\end{align}
Following as in \eqref{absorption cgts} we have $\z = |\u|^{r-1}\u$. Next, we will show that $|\varphi| \leq 1$ for a.e.  $\Omega\times(0, T),$ where $\varphi$ is the limit of $\varphi_\epsilon$ as $\epsilon \rightarrow 0.$ This can be shown in a similar way as in \cite{1996_ch}. For the sake of completeness, we provide the details here.
We take the inner product by $G'_\epsilon(\varphi_\epsilon)$ in the equation $\eqref{approx P}_2$ to get 
(note that at the approximation level $G_\epsilon\in \mathrm{C}^2([0, T];\mathrm{H}^1)$)
\begin{align*}
    \frac{d}{dt}\int_\Omega G_\epsilon(\varphi_\epsilon) = -\int_\Omega m_\epsilon(\varphi_\epsilon) \nabla(-\Delta\varphi_\epsilon + F'_\epsilon(\varphi_\epsilon))G''_\epsilon(\varphi_\epsilon)\nabla\varphi_\epsilon.
\end{align*}
Since $m_\epsilon G''_\epsilon = 1,$ we have from the above expression
\begin{align*}
   \int_\Omega G_\epsilon(\varphi_\epsilon(t)) = \int_\Omega G_\epsilon(\varphi_\epsilon(0))-\int_0^t\|\Delta\varphi_\epsilon(s)\|^2 ds- \int_0^t F''_\epsilon(\varphi_\epsilon(s))\|\nabla\varphi_\epsilon(s)\|^2ds,
\end{align*}
for all $t\in[0,T]$. It follows that
\begin{align*}
  &\int_\Omega G_\epsilon(\varphi_\epsilon(t)) + \int_0^t\|\Delta\varphi_\epsilon(t)\|^2 dt + \int_0^t F''_{1\epsilon}(\varphi_\epsilon(t))\|\nabla\varphi_\epsilon(t)\|^2 \nonumber\\&\leq \int_\Omega G_\epsilon(\varphi(0)) - \int_0^t F''_2(\varphi_\epsilon(t))\|\nabla\varphi_\epsilon(t)\|^2 dt.
\end{align*}
As $F''_2\geq -C_3$, and using assumption on $m,$ we obtain $G_\epsilon(\varphi_0) \leq G(\varphi_0)$ for $\epsilon$ small enough. Applying Gronwall's inequality, we have for all $t\in[0, T]$
\begin{align}\label{6.15}
    \int_\Omega G_\epsilon(\varphi_\epsilon(t)) \leq C,
\end{align}
 Let us first assume that $s>1, \, 0<\epsilon < 1$. Then the Taylor series expansion gives
\begin{align*}
    G_\epsilon(s) &= G_\epsilon(1-\epsilon) + G_\epsilon'(1-\epsilon)(s-(1-\epsilon)) + \frac{1}{2}G''_\epsilon(1-\epsilon)(s-(1-\epsilon))^2\\
    & \geq \frac{1}{2m_\epsilon(1-\epsilon)}(s-1)^2. 
\end{align*}
Similarly, if $s < -1,$ we have 
\begin{align*}
    G_\epsilon(s) \geq \frac{1}{2m_\epsilon(\epsilon-1)}(s+1)^2.
\end{align*}
We define $\varphi_+=\max\{\varphi,0\}$. Thus, it is immediate that
\begin{align*}
    \int_\Omega(|\varphi_\epsilon|-1)^2_{+} =& \int_{\{\varphi_\epsilon>1\}}(|\varphi_\epsilon|-1)^2 + \int_{\{\varphi_\epsilon<-1\}}(|\varphi_\epsilon|-1)^2\\
     =& \int_{\{\varphi_\epsilon>1\}}(\varphi_\epsilon-1)^2 + \int_{\{\varphi_\epsilon<-1\}}(\varphi_\epsilon + 1)^2\\
    \leq & \, 2m_\epsilon(1-\epsilon)\int_{\{\varphi_\epsilon>1\}}G_\epsilon(\varphi_\epsilon) + 2m_\epsilon(\epsilon-1)\int_{\{\varphi_\epsilon<-1\}}G_\epsilon(\varphi_\epsilon)\\
    \leq & \, 2 \max \big\{m_\epsilon(1-\epsilon), m_\epsilon(\epsilon-1)\big\}\int_\Omega G_\epsilon(\varphi_\epsilon).
\end{align*}
Using \eqref{6.15}, as $\epsilon \rightarrow 0$, the right-hand side of the above inequality goes to $0$. Therefore, we obtain
\begin{align}\label{x}
    \lim_{\epsilon \rightarrow 0}\int_\Omega(|\varphi_\epsilon|-1)^2_{+} = 0.
\end{align}
Now using the convergence in \eqref{convergences} and  General LDCT (see \cite[Chapter 4, Theorem 19]{royden}), we obtain from \eqref{x} that
\begin{align*}
    \int_\Omega(|\varphi|-1)^2_{+} = 0 \ \text{ a.e. }\  t \in (0, T).
    \end{align*}
Thus, we have proved that $|\varphi(x, t) \leq 1$ a.e. in $(x, t)\in \Omega\times(0, T).$ Now, we are in a position to pass the limit in the variational formulation of the approximated problem \eqref{approx P}. It is easy to see that the variational formulation of \eqref{approx P} is given in \eqref{ueps}-\eqref{phieps}.

We need to multiply the approximate equation of $\u_\epsilon$ by $\eta \in C_c^\infty(0, T)$. Then, passing to the limit in the left hand side of \eqref{ueps} is easy and can be done in a similar way as in the case of non-degenerate mobility explained earlier. So, we will only show the convergence of the right hand side. We consider 
\begin{align}
&\Big|\int_0^T (\Delta\varphi_\epsilon(s) \nabla\varphi_\epsilon(s) - \Delta\varphi(s) \nabla\varphi(s), \eta(s)\v) ds\Big| \no\\&\leq \Big|\iint_{\Omega\times(0, T)}(\Delta\varphi_\epsilon(s) - \Delta\varphi(s))\nabla\varphi_\epsilon(s)\cdot\eta(s)\v dx ds \Big| + \Big|\int_0^T(\Delta\varphi(s)\nabla(\varphi_\epsilon(s)-\varphi(s)), \eta(s)\v) ds \Big|\no\\
&\leq \Big|\iint_{\Omega\times(0, T)}(\Delta\varphi_\epsilon(s) - \Delta\varphi(s))\nabla\varphi_\epsilon(s)\cdot\eta(s)\v dx ds \Big| \no\\& \quad
  + \int_0^T\|\Delta\varphi(s)\|_{\mathrm{L}^2}\|\nabla(\varphi_\epsilon-\varphi)(s)\|_{\mathbb{L}^3}\|\eta(s)\v\|_{\mathbb{L}^3_{\sigma}} ds\label{lim ueps}.
\end{align}
As, $\V_{\mathrm{div}}\hookrightarrow \mathbb{L}^6$ in $3$D and $\eta \in C_c^\infty(0, T)$, therefore $\eta\v \in \mathrm{L}^\infty(0, T; \V_{\mathrm{div}})$.  From \eqref{H3 estimate}, we obtain $\nabla\Delta\varphi_\epsilon \rightharpoonup \nabla\Delta\varphi$ in $\mathrm{L}^2(0, T;\mathbb{L}^2)$, which implies that the first term of \eqref{lim ueps} tend to $0$ as $\epsilon \rightarrow 0.$
Note that from the Aubin-Lions lemma, we have $\varphi_\epsilon \rightarrow \varphi$ strongly in $\mathrm{L}^2(0, T; \mathrm{H}^{\frac{3}{2}})$. Again by the Sobolev embedding, we get $\|\nabla\varphi\|_{\mathbb{L}^3} \leq C\|\varphi\|_{\mathrm{H}^{\frac{3}{2}}}$.  By an application of Gagliardo-Nirenberg's and H\"older's inequalities, we also have 
\begin{align*}
&	\int_0^T\|\Delta\varphi(s)\|_{\mathrm{L}^2}\|\nabla\varphi(s)\|_{\mathbb{L}^3}\|\eta(s)\v\|_{\mathbb{L}^3_{\sigma}} ds\nonumber\\&\leq C\left(\int_0^T\|\Delta\varphi(s)\|_{\mathrm{L}^2}^2ds\right)^{3/4}\sup_{t\in[0,T]}\|\nabla\varphi(s)\|\sup_{t\in[0,T]}\|\v(s)\|^{1/2}\left(\int_0^T\|\nabla\v(s)\|^2ds\right)^{1/4}<\infty.
\end{align*}
 Thus the limit of the second term in the  expression \eqref{lim ueps} goes to $0$ as $\epsilon \rightarrow 0.$ Similarly we multiply the equation \eqref{phieps} by $\chi\in C_c^\infty(0, T)$ and integrate from $0$ to $T$ yields
\begin{align*}
   \lim_{\epsilon \rightarrow 0} \int_0^T\langle\varphi_\epsilon'(t), \chi(t)\psi\rangle dt = -\lim_{\epsilon \rightarrow 0}\int_0^T (\boldsymbol{u}_\epsilon(t) \cdot \nabla \varphi_\epsilon(t), \chi(t)\psi) dt + \int_0^T( m^\ast(t), \chi(t)\nabla\psi) dt,
\end{align*}
where $m^*$ is the weak limit of $m_\epsilon(\varphi_\epsilon(t))\nabla(\Delta\varphi_\epsilon(t)) -(m_\epsilon (\varphi_\epsilon(t) )F''_\epsilon(\varphi_\epsilon(t))\nabla\varphi_\epsilon(t)$.  Our aim is to show that $m^*=m(\varphi)\nabla(\Delta\varphi)-m(\varphi)F''(\varphi)\nabla\varphi$. Using the convergences given in \eqref{convergences}, one can easily see that 
\begin{align*}
    \lim_{\epsilon \rightarrow 0} \int_0^T\langle\varphi_\epsilon'(t), \chi(t)\psi\rangle dt=&  \int_0^T\langle\varphi'(t), \chi(t)\psi\rangle dt,\\
    \lim_{\epsilon \rightarrow 0}\int_0^T(\boldsymbol{u}_\epsilon(t) \cdot \nabla \varphi_\epsilon(t), \chi(t)\psi) dt =& \int_0^T( \boldsymbol{u}(t) \cdot \nabla \varphi(t), \chi(t)\psi) dt .
\end{align*}
Let us take $\psi \in C^1(\overline{\Omega})$ and $\boldsymbol{\omega}(t, x) = \chi(t)\nabla\psi(x) \in C(\overline{\Omega}\times[0, T] )$. To find $m^\ast$, we want to pass to the limit in the following:
\begin{align}\label{rhs limit}
\int_{\Omega\times(0, T)}m_\epsilon(\varphi_\epsilon)\nabla\mu_\epsilon \cdot \boldsymbol{\omega} = \int_{\Omega\times(0, T)}m_\epsilon(\varphi_\epsilon)\nabla(-\Delta\varphi_\epsilon + F_\epsilon'(\varphi_\epsilon))\cdot\boldsymbol{\omega}.
\end{align}
We can write
\begin{align}
    \int_{\Omega\times(0, T)}m_\epsilon(\varphi_\epsilon)\nabla(-\Delta\varphi_\epsilon)\cdot\boldsymbol{\omega} = & \int_{\Omega\times(0, T)} m_\epsilon(\varphi_\epsilon)\Delta\varphi_\epsilon \nabla\cdot\boldsymbol{\omega}+ \int_{\Omega\times(0, T)} m_\epsilon'(\varphi_\epsilon)\Delta\varphi_\epsilon\nabla\varphi_\epsilon\cdot\boldsymbol{\omega}  \no\\
    =: & A + B.
\end{align}
Note that for all $s\in\mathbb{R},$ we have 
\begin{align*}
    |m_\epsilon(s) - m(s)| \leq \sup_{-1\leq s \leq -1+\epsilon}|m(-1+\epsilon)-m(s)| + \sup_{1-\epsilon\leq s \leq 1}|m_\epsilon(1-\epsilon) - m(s)| \rightarrow 0 \text{ as } \epsilon \rightarrow 0.
\end{align*}
Therefore $m_\epsilon \rightarrow m$ uniformly. Thus from the pointwise convergence of $\varphi_\epsilon$, we have
\begin{align}\label{uniform m cgts}
    m_\epsilon(\varphi_\epsilon) \rightarrow m(\varphi)\  \text{ a.e. in  }\ \Omega\times(0, T).
\end{align}
Now, we are in a position to pass to the limit in $A$. Let us consider 
\begin{align}
    &\Big|\int_{\Omega\times(0, T)} m_\epsilon(\varphi_\epsilon)\Delta\varphi_\epsilon \nabla\cdot\boldsymbol{\omega} - \int_{\Omega\times(0, T)} m(\varphi)\Delta\varphi \nabla\cdot\boldsymbol{\omega}\Big|\no\\
    & \leq \esssup_{(x,t)\in \Omega\times(0, T)}|m_\epsilon(\varphi_\epsilon)-m(\varphi)|\|\varphi\|_{\mathrm{L}^2(0, T;\mathrm{H}^2)}\|\boldsymbol{\omega}\|_{\mathrm{L}^2(0, T; \H^1)}+\left| \iint_{\Omega\times(0, T)}m(\varphi)(\Delta\varphi_\epsilon-\Delta\varphi)\nabla\cdot\boldsymbol{\omega}\right|\no\\
    & \rightarrow 0 \text{ as } \epsilon \rightarrow 0.\label{lim A}
\end{align}
As in \eqref{uniform m cgts}, we can show that $m'_\epsilon \rightarrow m'$ uniformly which gives
\begin{align}\label{uniform m' cgts}
 m'_\epsilon(\varphi_\epsilon) \rightarrow m'(\varphi) \ \text{ a.e. in  }\ \Omega\times(0, T).
\end{align}
Also, from the strong convergence of $\varphi$ in \eqref{convergences}, we have $\nabla\varphi_\epsilon \rightarrow \nabla\varphi$ in $\mathrm{L}^2(0, T;\mathbb{L}^2)$ and $\nabla\varphi_\epsilon \rightarrow \nabla\varphi$ a.e. in $\Omega\times(0, T)$ (along a subsequence). Thus, we have 
\begin{align}\label{mphi product cgts}
m'_\epsilon(\varphi_\epsilon)\nabla\varphi_\epsilon \rightarrow m'(\varphi)\nabla\varphi \ \text{ in } \mbox{a.e. in $\Omega\times(0, T)$.}
\end{align}
Now we calculate the limit of $B$ as follows
\begin{align}
   & \Big|\int_{\Omega\times(0, T)} \Delta\varphi_\epsilon m_\epsilon'(\varphi_\epsilon)\nabla\varphi_\epsilon\cdot\boldsymbol{\omega} - \int_{\Omega\times(0, T)}\Delta\varphi m'(\varphi)\nabla\varphi\cdot\boldsymbol{\omega}\Big|\no\\
    &\leq \|\Delta\varphi_\epsilon\|_{\mathrm{L}^2(0, T; \mathrm{L}^2)}\|m'_\epsilon(\varphi_\epsilon)\nabla\varphi_\epsilon - m'(\varphi)\nabla\varphi\|_{\mathrm{L}^2(\Omega\times(0, T)}\|\boldsymbol{\omega}\|_{\mathrm{L}^\infty(\Omega\times(0, T))} \no\\
    &\quad+\left| \iint_{\Omega\times(0, T)}m'(\varphi)(\Delta\varphi_\epsilon-\Delta\varphi)\nabla\varphi\cdot\boldsymbol{\omega}\right|\rightarrow 0 \text{ as } \epsilon \rightarrow 0\label{lim B},
\end{align}
where we have used the LDCT and weak convergence of $\Delta\varphi_\epsilon$ to $\Delta\varphi$. Now we want to pass to the limit on the second term of \eqref{rhs limit}. We know that $m_\epsilon F''_\epsilon$ is uniformly bounded in $\mathbb{R}$, so it has convergent subsequence by the Bolzano-Wierstrass Theorem. We want to show that the limit of the sequence is $m(\varphi)F''(\varphi)$, that is, 
\begin{align}\label{third term convergence}
    m_\epsilon(\varphi_\epsilon)F_\epsilon''(\varphi_\epsilon) \rightarrow m(\varphi)F''(\varphi) \quad \ \text{a.e. in }\  \Omega\times(0, T).
\end{align}
If $|\varphi(x, t)| < 1$, then $F_\epsilon''(\varphi_\epsilon) = F''(\varphi_\epsilon)$. Since $F \in C^2((-1, 1)),$ we have $F_\epsilon''(\varphi_\epsilon) \rightarrow F''(\varphi)$ a.e. in $\Omega\times(0, T).$ So we have \eqref{third term convergence} in this case by an application of \eqref{uniform m cgts}.

Now let us consider points the $(x, t)$ such that $|\varphi(x, t)| = 1$. Without loss of generality, we can take $\varphi_\epsilon(x, t) \rightarrow 1 = \varphi(x, t).$ If $\varphi_\epsilon(x, t) \geq 1 -\epsilon,$ then $m_\epsilon(\varphi_\epsilon)F''_\epsilon(\varphi_\epsilon) \rightarrow m(1)F''(1) = m(\varphi(x, t))F''(\varphi(x, t))$. Similarly we can conclude, if $\varphi_\epsilon(x, t) \leq 1 -\epsilon$. Let us now consider 
\begin{align}
    &\Big|\int_{\Omega\times(0, T)} m_\epsilon(\varphi_\epsilon)\nabla F_\epsilon'(\varphi_\epsilon)\cdot\boldsymbol{\omega} - \int_{\Omega\times(0, T)} m(\varphi)\nabla F'(\varphi)\cdot\boldsymbol{\omega}\Big|\no\\
    & \leq \sup_{(x,t)\in \Omega\times(0, T)}| m_\epsilon(\varphi_\epsilon)F_\epsilon''(\varphi_\epsilon)-m(\varphi)F''(\varphi)|\|\nabla\varphi_\epsilon\|_{\mathrm{L}^2(0, T;\mathbb{L}^2)}\|\boldsymbol{\omega}\|_{\mathrm{L}^2(0, T; \mathbb{L}^2)}\no\\
    & \quad+ \sup_{(x,t)\in \Omega\times(0, T)}|m(\varphi)F''(\varphi)|\|\nabla\varphi_\epsilon-\nabla\varphi\|_{\mathrm{L}^2(0, T; \mathbb{L}^2)}\|\boldsymbol{\omega}\|_{\mathrm{L}^2(0, T; \mathbb{L}^2)}\no\\
    & \rightarrow 0 \text{ as } \epsilon \rightarrow 0.\label{last term}
\end{align}
By the uniqueness of weak limit, one can obtain $m^*=m(\varphi)\nabla(\Delta\varphi)-m(\varphi)F''(\varphi)\nabla\varphi$.

With the help of the equations \eqref{lim ueps}, \eqref{lim A}, \eqref{lim B} and \eqref{last term}, we can pass to the limit as $\epsilon \rightarrow 0$ in the equations \eqref{ueps} and \eqref{phieps} to get the variational formulation \eqref{1weak form u}, \eqref{1weak form phi}.

Now to show the uniform energy estimate, we can take test function $\psi = \varphi$ in the equation \eqref{1weak form phi} and obtain
\begin{align}
    &\frac{1}{2}\|\varphi(t)\|^2 + \int_0^t\|\sqrt{m(\varphi(s))F''(\varphi(s))}\nabla\varphi\|^2 ds\no\\&= \frac{1}{2}\|\varphi(0)\|^2 +  \int_0^t\int_\Omega m(\varphi(x,s))\nabla(\Delta\varphi(x,s))\cdot\nabla\varphi(x, s) dx ds,\label{energy half}
\end{align}
for all $t\in[0,T]$. Since the energy equality holds for $\u_{\epsilon},$ in the equation \eqref{ueps}, we take $\v = \u_\epsilon$ to get
\begin{align}
\frac{1}{2}&\|\u_\epsilon(t)\|^2 + \nu\int_0^t\|\nabla\u_\epsilon(s)\|^2 ds + \beta\int_0^t\|\u_\epsilon(s)\|^{r+1}_{\mathbb{L}_{\sigma}^{r+1}} ds\no\\
    &= \frac{1}{2}\|\u_\epsilon(0)\|^2 + \int_0^t\int_\Omega\Delta\varphi_\epsilon(x,s)\nabla\varphi_\epsilon(x,s)\cdot\u_\epsilon(x, s) dx ds,
\end{align}
for all $t\in[0,T]$. Now one can pass the limit as in the non-degenerate mobility case to obtain for all $t\in[0, T]$
\begin{align}
\frac{1}{2}&\|\u(t)\|^2 + \nu\int_0^t\|\nabla\u(s)\|^2 ds + \beta\int_0^t\|\u(s)\|^{r+1}_{\mathbb{L}_{\sigma}^{r+1}} ds\no\\
    &\leq \frac{1}{2}\|\u_0\|^2 + \int_0^t\int_\Omega\Delta\varphi(x,s)\nabla\varphi(x,s)\cdot\u(x, s) dx ds.\label{energy other half}
\end{align}
Adding \eqref{energy half} and \eqref{energy other half}, we derive our claimed energy inequality \eqref{deg energy}.
The only thing we left to prove is the convergence of the initial data. This part follows in a similar way  as in  step 8 of the proof of Theorem \ref{LH weak sol}.
\end{proof}
In the following theorem, we will show the inequality sign that appears in \eqref{deg energy} can be replaced by equality so that energy equality holds.
\begin{theorem}\label{thm 6.4}
  Every weak solution with initial data $(\u_0, \varphi_0)\in \H\times\mathrm{H}^1$ of the system \eqref{equ P} on a bounded domain satisfies the energy equality  
      \begin{align}\label{deg ee}
&\frac{1}{2}\Big(\|\u(t)\|^2 + \|\varphi(t)\|^2\Big) + \int_0^t\|\sqrt{m(\varphi(s))F''(\varphi(s))}\nabla\varphi\|^2 ds +\nu\int_0^t\|\nabla\u(s)\|^2 ds   \no\\ &\quad+ \beta\int_0^t\|\u(s)\|^{r+1}_{\mathbb{L}_{\sigma}^{r+1}} ds
   +\int_0^t(\Delta\varphi(s)\nabla\varphi(s),\u(s)) ds\\&= \frac{1}{2}\Big(\|\u_0\|^2 + \|\varphi_0\|^2 \no\Big)+ \int_0^t\langle\mathbb{U}(s), \u(s)\rangle \, ds+\int_0^t( m(\varphi(s)) \nabla\Delta\varphi(s), \nabla\varphi(s)) ds,
\end{align}
for all $t\in[0,T]$ and $r\geq 3$.
\end{theorem}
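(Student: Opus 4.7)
My plan is to follow the mollification argument used in Theorem \ref{thm4.10} and adapt it to the degenerate case. The energy identity for $\varphi$
\begin{equation*}
\tfrac{1}{2}\|\varphi(t)\|^{2}+\int_{0}^{t}\|\sqrt{m(\varphi)F''(\varphi)}\nabla\varphi\|^{2}\,ds=\tfrac{1}{2}\|\varphi_{0}\|^{2}+\int_{0}^{t}(m(\varphi)\nabla\Delta\varphi,\nabla\varphi)\,ds
\end{equation*}
has already been established in \eqref{energy half} by taking $\psi=\varphi$ in the weak formulation \eqref{1weak form phi}. It therefore suffices to prove the velocity counterpart
\begin{equation*}
\tfrac{1}{2}\|\u(t)\|^{2}+\nu\int_{0}^{t}\|\nabla\u\|^{2}\,ds+\beta\int_{0}^{t}\|\u\|^{r+1}_{\mathbb{L}^{r+1}_{\sigma}}\,ds+\int_{0}^{t}(\Delta\varphi\nabla\varphi,\u)\,ds=\tfrac{1}{2}\|\u_{0}\|^{2}+\int_{0}^{t}\langle\mathbb{U},\u\rangle\,ds,
\end{equation*}
and to sum the two identities in order to recover \eqref{deg ee}.

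For the velocity identity I would test the weak formulation \eqref{1weak form u} against $\v=\u^{n}_{h}$, where $\u^{n}$ is the eigenspace approximation \eqref{eigensp approx} and $\u^{n}_{h}$ is its time mollification \eqref{mol u} with the cut-off $\chi_{[0,t_{1}]}$ built in. Integrating by parts in the time derivative term and passing $n\to\infty$, the linear, convective, damping and forcing contributions are handled exactly as in Theorem \ref{thm4.10}: the convective term uses $(\u\cdot\nabla)\u\in\mathrm{L}^{2}(0,T;\V'_{\mathrm{div}})$, which is available because $r\geq 3$ guarantees $\u\in\mathrm{L}^{4}(0,T;\mathbb{L}^{4}_{\sigma})$, while the damping term uses the strong convergence of $\u^{n}_{h}\to\u_{h}$ in $\mathrm{L}^{r+1}(0,T;\mathbb{L}^{r+1}_{\sigma})$.

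The genuinely new ingredient is the coupling term $\int_{0}^{t_{1}}(\Delta\varphi\nabla\varphi,\u^{n}_{h})\,ds$, which I would handle by first upgrading the regularity of $\varphi$. Passing to the limit $\epsilon\to 0$ in the uniform $\mathrm{L}^{2}(\mathbb{L}^{2})$-bound on $\nabla\Delta\varphi_{\epsilon}$ obtained in \eqref{H3 estimate} yields $\varphi\in\mathrm{L}^{2}(0,T;\mathrm{H}^{3})$, and interpolating with $\varphi\in\mathrm{L}^{\infty}(0,T;\mathrm{H}^{1})$ then gives $\varphi\in\mathrm{L}^{4}(0,T;\mathrm{H}^{2})$. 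The H\"older--Sobolev bound
\begin{equation*}
|(\Delta\varphi\nabla\varphi,\w)|\leq\|\Delta\varphi\|\,\|\nabla\varphi\|_{\mathbb{L}^{3}}\,\|\w\|_{\mathbb{L}^{6}}\leq C\|\varphi\|_{\mathrm{H}^{2}}^{3/2}\|\varphi\|_{\mathrm{H}^{1}}^{1/2}\|\w\|_{\V_{\mathrm{div}}},
\end{equation*}
together with H\"older in time with conjugate exponents $(8/3,\,8/5)$, yields
\begin{equation*}
\int_{0}^{t_{1}}|(\Delta\varphi\nabla\varphi,\u^{n}_{h}-\u_{h})|\,ds\leq C\|\varphi\|_{\mathrm{L}^{\infty}(0,T;\mathrm{H}^{1})}^{1/2}\|\varphi\|_{\mathrm{L}^{4}(0,T;\mathrm{H}^{2})}^{3/2}\|\u^{n}_{h}-\u_{h}\|_{\mathrm{L}^{2}(0,T;\V_{\mathrm{div}})}\to 0
\end{equation*}
as $n\to\infty$, and the analogous estimate handles the subsequent limit $h\to 0$.

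To close the argument, I would let $h\to 0$ in the remaining terms: the integrated-by-parts piece $\int_{0}^{t_{1}}\langle\u,\u'_{h}\rangle\,ds$ vanishes by the even-mollifier symmetry established in \eqref{time mol}, while the boundary contributions satisfy $(\u(t_{1}),\u_{h}(t_{1}))\to\tfrac{1}{2}\|\u(t_{1})\|^{2}$ and $(\u(0),\u_{h}(0))\to\tfrac{1}{2}\|\u_{0}\|^{2}$, thanks to the $\H$-weak continuity of $\u$ (cf.\ Corollary \ref{cor4.4}) combined with the initial-data convergence \eqref{deg_in_data_cgts}. The main obstacle, in contrast with Theorem \ref{thm4.10}, is that the degeneracy of $m$ at $\pm 1$ prevents one from treating $\mu$ as an element of any usable function space (only $\sqrt{m(\varphi)}\nabla\mu$ is known to lie in $\mathrm{L}^{2}$); this forces one to work directly with the rewritten coupling $-\Delta\varphi\nabla\varphi$, and the success of the mollification argument depends crucially on extracting the $\mathrm{L}^{4}(0,T;\mathrm{H}^{2})$-regularity of $\varphi$ from the uniform bound on $\nabla\Delta\varphi_{\epsilon}$ established in the proof of Theorem \ref{thm 6.3}.
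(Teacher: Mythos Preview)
Your proposal is correct and follows essentially the same approach as the paper: both take $\psi=\varphi$ directly in \eqref{1weak form phi} to obtain the $\varphi$-identity, and both test \eqref{1weak form u} with $\u^{n}_{h}$ and repeat the mollification argument of Theorem \ref{thm4.10}, the only new work being the coupling term $(\Delta\varphi\nabla\varphi,\u^{n}_{h})$, which in both cases is controlled via the $\mathrm{L}^{2}(0,T;\mathbb{L}^{2})$-bound on $\nabla\Delta\varphi$ inherited from \eqref{H3 estimate}. The paper uses slightly different H\"older splittings for this term (placing $\Delta\varphi$ in $\mathrm{L}^{3}$ and $\u^{n}_{h}-\u_{h}$ in $\mathrm{L}^{\infty}(0,T;\H)$ for the $n\to\infty$ step, then integrating by parts to put $\nabla\Delta\varphi$ in $\mathrm{L}^{2}$ for the $h\to 0$ step), whereas you use a single $\mathrm{L}^{2}\times\mathrm{L}^{3}\times\mathrm{L}^{6}$ split together with the interpolated regularity $\varphi\in\mathrm{L}^{4}(0,T;\mathrm{H}^{2})$; both routes are equivalent.
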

\begin{proof}
 The proof will follow in a similar way to the energy equality proof for non-degenerate mobility and regular potential (cf. Theorem \ref{thm4.10}). Therefore, we briefly discuss the proof here. Let us take the function $\u^n_h$ from \eqref{mol u}, where $(\u,\varphi)$ is a weak solution of the system \eqref{equ P} with singular potential and degenerate mobility. To get energy equality, we take $\u^n_h$ as a test function in the equation \eqref{1weak form u},
 \begin{align}\label{degeu}
     &\int_0^t\langle \boldsymbol{u}'(s), \u^n_h(s)\rangle ds + \nu \int_0^t(\nabla\boldsymbol{u}(s), \nabla\u^n_h(s)(s)) ds+ \int_0^t\langle (\boldsymbol{u}(s)\cdot \nabla) \boldsymbol{u}(s), \u^n_h(s)(s) \rangle ds  \no\\
     &+ \beta \langle |\u(s)|^{r-1}\u(s), \u^n_h) \rangle ds  + \int_0^t(\Delta\varphi(s) \nabla \varphi(s), \u^n_h(s)) ds = \int_0^t\langle\mathbb{U}(s), \u^n_h(s)\rangle ds.
     \end{align}
 Then we pass to the limit as $n\to\infty$ and $h\to 0$ in the above equation. We only need to check the limit of the last term of the left hand side of \eqref{1weak form u}; the other terms convergence will follow similar to the proof described in Theorem \ref{thm4.10}.  We consider
\begin{align*}
     \Big|\int_0^t&(\Delta\varphi(s)\nabla\varphi(s), \u^n_h(s)-\u_h(s)) ds\Big| \leq C \int_0^t\|\Delta\varphi(s)\|_{\mathrm{L}^3}\|\nabla\varphi(s)\|_{\mathbb{L}^6}\|\u^n_h(s)-\u_h(s)\|_{\H} \, ds\\
     & \leq C\|\nabla\Delta\varphi\|^\frac{1}{2}_{\mathrm{L}^2(0, T; \mathbb{L}^2)}\|\varphi\|^{\frac{3}{2}}_{\mathrm{L}^2(0, T; \mathrm{H}^2)}\|\u^n_h-\u_h\|_{\mathrm{L}^\infty(0, T; \H)} \to 0 \text{ as } n \to \infty.
 \end{align*}

 Further,
 \begin{align*}
      \Big|\int_0^t(&\Delta\varphi(s)\nabla\varphi(s), \u_h(s)-\u(s)) ds\Big| = \Big|\int_0^t(\nabla\Delta\varphi(s)\cdot (\u_h-\u)(s), \varphi(s)) ds \Big|\\
      & \leq \int_0^t\|\nabla\Delta\varphi(s)\|_{\mathrm{L}^2}\|\u_h(s)-\u(s)\|_{\mathbb{L}^3_{\sigma}}\|\varphi(s)\|_{\mathrm{L}^3} ds\\
      & C\leq \|\varphi\|_{\mathrm{L}^\infty(0, T; \mathrm{L}^2)}\|\nabla\varphi\|_{\mathrm{L}^\infty(0, T; \mathbb{L}^2)}\|\nabla\Delta\varphi\|_{\mathrm{L}^2(0, T; \mathbb{L}^2)}\|\u_h-\u\|_{\mathrm{L}^2(0, T; \V_{\mathrm{div}})} \to 0 \text{ as } h\to 0.
 \end{align*}
 Finally, we obtain from \eqref{degeu} that
    \begin{align}\label{degee u}
    \frac{1}{2}\|\u(t)\|^2 + \nu \int_0^{t} \|\nabla\u(s)\|^2 \, ds  + \beta \int_0^{t}\|\u(s)\|^{r+1}_{\mathbb{L}_{\sigma}^{r+1}(\Omega)} +\int_0^t(\Delta\varphi(s)\nabla\varphi(s), \u(s)) ds \no\\=  \frac{1}{2}\|\u(0)\|^2  +\int_0^{t_1}\langle\mathbb{U}(s), \u(s)\rangle ds,
\end{align}
for all $t \in [0, T].$ 
 For the $\varphi$ equation, we replace the test function $\psi$ by $\varphi$ in \eqref{1weak form phi} and get 
 \begin{align}\label{degee phi}
  & \frac{1}{2}\|\varphi(t)\|^2+ \int_0^t\|\sqrt{m(\varphi(s))F''(\varphi(s))}\nabla\varphi(s)\|^2 ds\nonumber \\&= \frac{1}{2}\|\varphi_0\|^2 +\int_0^t(m(\varphi(s))\nabla \Delta\varphi(s), \nabla\varphi(s)) ds,
 \end{align}
for all $t\in[0,T]$. Adding \eqref{degee u} and \eqref{degee phi}, one obtains the required result.
\end{proof}

\bibliographystyle{plain}
\bibliography{mybibliography}
\end{document}